
\documentclass[notitlepage,11pt]{article}
\usepackage{amsfonts}
\usepackage{amssymb}
\usepackage{amsmath}
\usepackage{harvard}
\usepackage{lscape}
\usepackage{graphicx}
\usepackage{latexsym,epsfig,amsmath}
\usepackage{epsfig,graphicx}
\usepackage{tabularx}
\usepackage{rotating}
\usepackage{hyperref}

\setcounter{MaxMatrixCols}{10}

\newtheorem{theorem}{Theorem}

\newtheorem{definition}{Definition}
\newcounter{examplec}
\newtheorem{example}[examplec]{Example}

\newcounter{lemmac}
\newtheorem{lemma}[lemmac]{Lemma}

\newcounter{propos}
\newtheorem{proposition}[propos]{Proposition}

\newcounter{assumec}
\newtheorem{assume}[assumec]{Assumption}
\renewcommand{\theassume}{\Alph{assume}}

\newenvironment{proof}[1][Proof]{\textbf{#1.} }{\ \rule{0.5em}{0.5em}}
\addtolength{\oddsidemargin}{-.5in}
\evensidemargin\oddsidemargin
\addtolength{\textheight}{.75in}
\addtolength{\textwidth}{1in}
\addtolength{\topmargin}{-.5in}
\addtolength{\textheight}{.5in}

\input{tcilatex}
\begin{document}

\title{Limit Theorems for Data with Network Structure\thanks{%
This paper was prepared for the Econometric Theory Lecture at the
Australasian Meetings of the Econometric Society in Auckland, New Zealand in
July 2018. A preliminary version was presented at NYU and Copenhagen. I
thank Peter C.B. Phillips, Quang Vuong and seminar participants at Cambridge
(UK), NYU, University College London and the Conference of the 30th year
Anniversary Reunion of the Review of Economic Studies Tour for helpful
comments. Financial support from the National Institute of Health through
SBIR grant 1 R43 AG056199-01 is gratefully acknowledged. }}
\author{Guido M. Kuersteiner\thanks{%
Department of Economics, University of Maryland, College Park, MD 20742,
e-mail: kuersteiner@econ.umd.edu}}
\maketitle

\begin{abstract}
This paper develops new limit theory for data that are generated by networks
or more generally display cross-sectional dependence structures that are
governed by observable and unobservable characteristics. Strategic network
formation models are an example. Wether two data points are highly
correlated or not depends on draws from underlying characteristics
distributions. The paper defines a measure of closeness that depends on
primitive conditions on the distribution of observable characteristics as
well as functional form of the underlying model. A summability condition
over the probability distribution of observable characteristics is shown to
be a critical ingredient in establishing limit results. The paper
establishes weak and strong laws of large numbers as well as a stable
central limit theorem for a class of statistics that include as special
cases network statistics such as average node degrees or average peer
characteristics. Some worked examples illustrating the theory are provided.
\end{abstract}

\QTP{Body Math}
\newpage

\section{Introduction}

There is growing interest in the economics literature in models that
represent strategic and non-strategic interactions between individuals.
Examples are the peer effects literature which has seen many applications in
the areas of microeconomics including in the fields of labor and
development. A related literature considers models of strategic network
formation. Strategic games are another area where models focus on the
interaction between individuals. A direct implication of all these models is
that the random sampling assumption underlying much of statistical theory
and related asymptotic approximations is not a good paradigm. This paper
aims to extend the available tool kit for the analysis of these
interdependent data structures by developing new measures of cross-sectional
dependence and by utilizing these measures to establish weak and strong laws
of large numbers as well as a stable central limit theorem.

There is a large and well established literature in probability theory that
analyses random graphs, a special class of network models dating back to Erd%
\H{o}s and R\'{e}nyi (1959). Random Geometric Graphs are extensively
analyzed in Penrose (2003) and constitute a class of models that is closer
to models that are relevant in economics and econometrics. The $\beta $%
-model is another network formation model that has received considerable
attentition in the statistics literature, see for example Holland and
Leinhardt (1981), Park and Newman (2004), Chatterjee, Diaconis and Sly
(2011) and that has recently been extended to applications in econometrics
by Graham (2017). Strategic network formation models have been porposed by
Jackson\ (2008) and been analyzed by Goldsmith-Pinkham and Imbens (2013) and
Menzel (2016) among others. Peer effects models focus on outcomes of
individuals linked by network structures. The links may be simple group
memberships or based on more sophisticated network formation models. Manski
(1993), Brock and Durlauf (2001), Calvo-Armengol, Patacchini and Zenou\
(2009), Graham (2008), Bramoull\'{e}, Djebbari and Fortin (2009),
Goldsmith-Pinkham and Imbens (2013), and de Paula (2016) consider
identification and estimation of these models. Game theoretic models include
Rust (1994), Aguirregabiria and Mira (2007) and Bajari, Benkard and Levin
(2007) to name only a few.

Limit theory related to these models has been developed both in the
probability and statistics literature as well as more recently in the
econometrics literature. Methods that deal with random fields,
generalizations of stochastic processes to multiple indices, include the
early contribution of Bolthausen (1982) who defines mixing coefficients
based on a non-random metric of distance between to points in the index set.
Conley (1999) appears to be the first application of Bolthauses's results in
the econometrics literature. Jenish and Prucha (2009) extend Bolthausen's
results by using sharper moment bounds based on work by Rio (1993). They
also prove uniform laws of large numbers for random fields. Jenish and
Prucha (2012) further builds on this line of work by considering near epoch
dependent random fields based on an underlying mixing random field. Penrose
and Yukish (2001) prove a CLT for functionals of point processes that are
translation invariant, satisfying a scaling property, and that are strongly
stabilizing. The proof is based on a coupling argument. Penrose and Yukish
(2003) and Leung (2016) establish laws of large numbers for functionals of
point processes. The functionals are translation and scale invariant and
stabilizing. These results are based on 'infill' asymptotics. The LLN\ is
obtained through a coupling argument and provides an approximate
representation for the limit functional. Graham (2017) considers models of
undirected dyadic network link formation allowing for homophily and agent
heterogeneity. A tetrad logit estimator conditions on a sufficient statistic
for degree heterogeneity. A CLT for the estimator is established under a
random sampling assumption and exploits conditional independence of the
network formation process, conditional on observed characteristics and fixed
effects. Kuersteiner and Prucha (2013) prove a general cross-sectional CLT
based on restrictions that imply a martingale difference sequence (mds)
property of sample averages. In follow up work Kuersteiner and Prucha (2015)
establish a CLT for linear-quadratic moment conditions in peer effects
models with endogenous network formation. The CLT is based on a spatial
martingale difference structure of the model errors. It depends on high
level conditions regarding the convergence of sample second moments. Lee and
Song (2017) consider random vectors defined on an undirected neighborhood
system. They derive a Berry-Essen bound and a stable functional central
limit theorem under conditional neighborhood independence. Menzel (2016)
develops a law of large numbers and central limit theorem for static
discrete action games with a large number of players under an
exchangeability assumption.

This paper extends and complements the existing literature in various
directions. The results in this paper are based on a new conditional
mixingale type assumption defined in terms of a random metric of distance.
The distance measure is model dependent and may include, in the case of
network models, the conditional probabilties of two nodes forming a link.
The relevant probability is conditional on node characteristics that are
drawn from some joint characteristics distribution. There are no assumptions
that characteristics are indepent. However, a requirement for the limiting
results in this paper is that nodes are sufficiently spread out as measured
by their characteristics so that dependence eventually dies off. This
restriction of sparsity that rules out a buildup of a mass of nodes with
very similar features is captured by a summability condition of the
probabilities that two nodes are close in an appropriate sense. The
summability condition is similar to the Borell-Cantelli lemma.

The two elements that control dependence in this paper are therefore the
mixingale condition which depends on features of the model that determine
how close two nodes or observations are conditional on their observed
characteristics. The second element is the characteristics distribution that
determines how frequently close enough data points are observed in a sample.
By combining these two elements it is possible to give a variance upper
bound for a maximal inequality based on Stout (1974) which then leads to a
weak and strong law of large numbers for a class of network statistics that
satisfy the mixingale assumptions. The proof of the central limit theorem is
based on a combination of techniques found in McLeish (1974) for pure
martingales and a blocking argument due to Eberlein (1984). The fact that
the proof of McLeish (1974) is focused on the martingale, or as adapted to
this paper, approximate martingale property of empirical sums is critical to
being able to avoid more complex mixing conditions. It is expected, and
documented with some examples, that the mixingale conditions proposed here
are easier to establish for specific models than related mixing conditions
would be. While verification of regularity conditions requires specific
models the general theory in this paper is completely non-parametric.
Nevertheless, results for sample averages of network statistics are relevant
for the asymptotic analysis of statistics in parametric settings. Some
examples from the peer effects literature are discussed.

This paper, by relying on the approximate behavior of the conditional mean,
is able to avoid some of the assumptions that are made in the literature,
including conditional independence, exchangeability and limited neighborhood
size. There is also no need to specify a fixed metric of distance relating
specific observations to fixed locations in an index space as is done in
Bolthausen (1982), Conley (1999) and Jenish and Prucha (2009, 2012). Rather,
in this paper, locations and associated distances are draws from some
underlying joint characteristics distribution that is not assumed to be
independent over different nodes or entities. The paper concludes by an
analysis of the regularity conditions in the specific case of a network
formation model that is similar to the model that Graham\ (2016) considered.

\section{Spatial Mixingales\label{Sec_Gen_Limit_Theory}}

The general model is based on network statistics $v$, defined broadly, that
are non-parametric functions of observed and unobserved characteristics. Let 
$\zeta =\left( \zeta _{1},...\right) $ be a collection of observed network
characteristics $\zeta _{i}$ for agent $i$ that affect $i$'s position within
the network, $z=\left( z_{1},...\right) $ a collection of observed
characteristics not necessarily related to network position, $\eta =\left(
\eta _{1},...\right) $ a collection of unobserved characteristics that
affect $i$'s network position and $\epsilon =\left( \epsilon _{ij}\right)
_{i,j=1}^{\infty }$ a collection of link specific idiosyncratic unobserved
shocks that affect the interaction between $i$ and $j$. For a network with $%
n $ agents, also referred to as nodes (see Chandrasekhar 2015) let $%
v_{i,n}\left( \zeta \right) $ be a network statistic for agent $i.$ For
simplicity assume that $v_{i,n}\left( \zeta \right) $ takes values in $%
\mathbb{R}$.

Let $\left( \Omega ,\mathcal{F},P\right) $ be a probability space. Let $%
\mathcal{Z}$ be a sub-sigma field of $\mathcal{F}$ such that $\zeta $ is
measurable with respect to $\mathcal{Z}$. Following Breiman (1992), Theorem
4.34 and Theorem A.46 for fixed $\omega \in \Omega ,$ let $Q_{\omega }\left(
B|\mathcal{Z}\right) $ be a regular conditional distribution given $\mathcal{%
Z}$ and define the conditional probability space $\left( \Omega ,\mathcal{F}%
,Q_{\omega }\right) .\footnote{%
A more detailed construction of the probability space is given in Section %
\ref{Section_ProbabilitySpace}.}$ Let $\chi _{i}=\left( z_{i}^{\prime
},\zeta _{i}^{\prime },\mu _{i}^{\prime },u_{i}^{\prime },\epsilon
_{ij}\right) \in \mathbb{R}^{d}$ be a collection of random variables defined
on $\left( \Omega ,\mathcal{F},P\right) $ and assume that $v_{i,n}\left(
\zeta \right) $ are measurable functions, possibly a finite section, of $%
\chi =\left( \chi _{1},...\right) .$

Network statistics are understood broadly for the purposes of this article.
They could be related to outcomes of strategic games where $v_{i,n}\left(
\zeta \right) $ might be the profit function of firm $i$ in a strategic game
with $n$ competitors. Alternatively, there might be an explicit network
represented by a graph with $n$ nodes and edges indicating a link between $i$
and $j.$ Such a graph can be represented by an adjacency matrix $D$ with
elements $d_{ij}$ where 
\begin{equation*}
d_{ij}=\left\{ 
\begin{array}{cc}
1 & \text{if }i\text{ and }j\text{ form a link} \\ 
0 & \text{if }i\text{ and }j\text{ do not form a link}%
\end{array}%
\right.
\end{equation*}%
and where $d_{ij}$ are functions of observable characteristics $\zeta ,$
unobservable characteristics $\mu $ that may also affect other outcomes and
idiosyncratic errors $\epsilon $ which are independent at the level of
individual links between $i$ and $j$ and are denoted as $\epsilon _{ij}.$

Examples for $v_{i,n}\left( \zeta \right) $ then are the degree 
\begin{equation*}
v_{i,n}\left( \zeta \right) =n_{i}=\sum_{j=1}^{n}d_{ij},
\end{equation*}
the clustering coefficient $v_{i,n}\left( \zeta \right) =\sum_{i\leq
j}d_{ij}d_{ik}d_{jk}$ or average characteristics of links that $i$ forms
with other members of the network, 
\begin{equation*}
v_{i,n}\left( \zeta \right) =\sum_{j=1}^{n}m_{ij}z_{j}
\end{equation*}
where $m_{ij}=n_{i}^{-1}d_{ij}.$

Networks may generate additional outcomes $y=\left( y_{1},...,y_{n}\right) $
that are implicitly, or if reduced forms exist, explicitly functions of $%
\zeta $ and $\eta $ as well as other exogenous observed and unobserved
factors $z,$ $\mu $ and $u.$ It is assumed that there exists a measurable
mapping $\Upsilon _{i,n}$ such that $y_{i}=\Upsilon _{i,n}\left( z,\zeta
,\eta ,u\right) .$ An example are linear peer effects models. Let $M\ $be
the spatial matrix with elements $m_{ij}=n_{i}^{-1}d_{ij}.$ Then,%
\begin{equation}
y=\lambda My+z\beta +u  \label{Model_Peer}
\end{equation}%
with reduced form $y=\left( I-\lambda M\right) ^{-1}\left( z\beta +u\right)
. $ Let $v=\left( v_{1,n}\left( \zeta \right) ,...,v_{n,n}\left( \zeta
\right) \right) .$ Relevant network statistics in this model are of the form 
$v=Mz$ or $v=Mu$ where $M$ and thus $v$ are functions of location
characteristics $\zeta .$ For example if $v=Mu,$ then the network statistic $%
v_{i,n}\left( \zeta \right) $ is given by $v_{i,n}\left( \zeta \right)
=\sum_{j=1}^{n}m_{ij}u_{j}=n_{i}^{-1}\sum_{j=1}^{n}d_{ij}u_{j},$ such that $%
v_{i,n}\left( \zeta \right) $ depends on $\zeta $ through $d_{ij}$ and $%
n_{i} $ as described above. In the conext of peer effects models,
establishing asymptotic properties of $Mz$ and $Mu$ is needed for example in
the analysis of estimators for the parameters $\lambda $ and $\beta $ in
maximum likelihood or moment based estimators.

The goal of this paper is to establish laws of large numbers and central
limit theorems for $\sum_{i=1}^{n}v_{i,n}\left( \zeta \right) $ under
general high level restrictions on the dependence of $v_{i,n}.$ This is done
without assuming specific parameteric models of how $v_{i,n}\left( \zeta
\right) $ is generated. Rather, dependence is described with the help of
mixing measures similar to the concept of mixingales in the time series
literature. The main technical difficulty is that proximity is determined by
a random variable $\zeta $ rather than given a priori.

Network statistics and outcomes are generated conditional on exogenous
network location indicators $\zeta .$ The overall dependence then rests both
on the distribution of $\zeta $ and the functional forms of $v$ and $y.$

To make progress on the latter the following device is introduced. It is
assumed that there exists a collection of functions $g_{ij}\left( \zeta
\right) $ with the property that $g_{ij}\left( \zeta \right) \in \left[ 0,1%
\right] ,$ and with the convention that $g_{ii}\left( \zeta \right) =1$ a.s.
A form of the triangular inequality 
\begin{equation}
g_{ij}\left( \zeta \right) ^{-1}\leq g_{ik}\left( \zeta \right)
^{-1}+g_{kj}\left( \zeta \right) ^{-1}\text{ for all }k
\label{Triangular_gij}
\end{equation}%
is assumed to hold. When $g_{ij}\left( \zeta \right) =0,$ the inequality is
interpreted as requiring either $g_{ik}\left( \zeta \right) =0$ or $%
g_{kj}\left( \zeta \right) =0$ for all $k.$ The interpretation of $g_{ij}$
is that of an inverse distance measure between agents $i$ and $j.$ When $%
v_{i}$ is profit or utility in an $n$-player game, $g_{ij}$ can be a measure
of the marginal effects of actions by $j$ on payoffs for $i.$ In the case of
network models a natural choice for $g_{ij}$ may be the conditional link
probability $E\left[ d_{ij}|\zeta \right] =p_{ij}\left( \zeta \right) $ such
that in this case $g_{ij}\left( \zeta \right) =p_{ij}\left( \zeta \right) .$
Whether the triangular inequality in (\ref{Triangular_gij}) holds for $E%
\left[ d_{ij}|\zeta \right] $ depends on the specific functional form of $%
d_{ij}$ as well as the conditional probability measure $E\left[ .|\zeta %
\right] .$ An expample where (\ref{Triangular_gij}) holds is presented
below. Related concepts of spatial distance functions were proposed by
Bolthausen (1982) and later introduced in the econometrics literature by
Conley (1999) and Jenish and Prucha (2009,2012). The difference between
these approaches and the treatment here is that there is no fixed ordering
of the data. The notion of distance between $i$ and $j$ is a random variable
that depends on the realization of the process $\zeta $ that determines
network location. An a priori ordering of the sample is therfore not
possible, unlike in Bolthausen (1982) and papers that are based on his
theory.

Often $p_{ij}\left( \zeta \right) $ depends on $\zeta $ only through the
elements $\zeta _{i}$ and $\zeta _{j}$ which are specific to agents $i$ and $%
j.$ This is the case in network formation models such as Goldsmith-Pinkham
and Imbens (2013) or Graham (2016, 2017). However, generally, such a
restriction may not hold in strategic games and it is not imposed in this
paper. When $g_{ij}\left( \zeta \right) $ depends on all $\zeta $ then it
may be an approximation or bound to a parameter in a game with finite number
of players. For example, the marginal effect of $j$'s actions on $i$'s
profits may change as $n$ changes. Since the function $g_{ij}\left( \zeta
\right) $ is not allowed to depend on $n$ for technical reasons that will
become clear later, it could be chosen for example as the supremum of the
marginal effect over all games of sizes $n\in \left\{ 2,...\right\} .$

Assume that the functions $g_{ij}\left( \zeta \right) $ are measurable with
respect to $\mathcal{Z}$. In network models it seems natural to define
distance in terms of the function $g_{ij}$ which loosely speaking measures
the intensity of the interaction between $i$ and $j.$ This implies that $%
g_{ij}$ decreases as the distance between $i$ and $j$ increases. An example
given above is the probability of $i$ and $j$ forming a link in a network.
This probably declines if the the distance, measured in units that are
meaningful in the context of a specific model, between $i$ and $j$
increases. From the perspective of formulating an asymptotic theory such a
decreasing function $g_{ij}$ is somewhat inconvenient.

Therefore define a map $\Lambda $ that transforms $g_{ij}$ into a measure
that is more akin of a metric. In some cases a transformation to a metric in
the conventional Euclidian sense is possible, although not required.

\begin{definition}
\label{Def_Lambda}For $k\in \mathbb{R}_{+}\cup \left\{ 0\right\} $ let $%
\Lambda $ be a non-random strictly monotonically decreasing function $%
\Lambda :\mathbb{R}_{+}\cup \left\{ 0\right\} \mathbb{\rightarrow }\left[ 0,1%
\right] $ with $\Lambda \left( k\right) >\Lambda \left( k^{\prime }\right) $
for all $k<k^{\prime },$ $\lim_{k\rightarrow \infty }\Lambda \left( k\right)
=0$ and $\Lambda \left( 0\right) =1.$
\end{definition}

The following example serves as an illustration of the role that $\Lambda $
plays. The setting of the example is a simple network formation model where
links are determined by the distance between characteristics $\zeta _{i}$
and $\zeta _{j}$ and an idiosyncratic disturbance $\epsilon _{ij}.$ In a
model of this type, the distance function $g_{ij}$ can be chosen as the
conditional probability $E\left[ d_{ij}|\zeta \right] $ which is only a
function of $\zeta $ as long as $\epsilon _{ij}$ is iid and independent of $%
\zeta .$ A simplified version of a model by Graham (2016) serves as an
example.

\begin{example}
\label{Example_Lambda}Let $\left\Vert .\right\Vert $ be the Euclidian norm$.$
Consider a network formation model for a directed network where 
\begin{equation}
d_{ij}=1\left\{ \alpha _{0}+\alpha _{\zeta }\left\Vert \zeta _{i}-\zeta
_{j}\right\Vert +\epsilon _{ij}>0\right\}  \label{Example_dij}
\end{equation}%
with $d_{ii}=0$ and $\epsilon _{ij}$ is iid logistic, and in particular $%
\epsilon _{ij}$ is independent of $\epsilon _{ji}$ implying that in general $%
d_{ij}\neq d_{ji},$ and $\alpha _{\zeta }<0.$ Then, $g_{ij}\left( \zeta
\right) =P\left( d_{ij}|\zeta \right) =H\left( \alpha _{0}+\alpha _{\zeta
}\left\Vert \zeta _{i}-\zeta _{j}\right\Vert \right) $ where $H\left(
.\right) =\exp \left( .\right) /\left( 1+\exp \left( .\right) \right) $ is
the logistic CDF. In this case choose $\Lambda \left( k\right) =cH\left(
\alpha _{0}+\alpha _{\zeta }k\right) $ with $c=\left( 1+\exp \left( \alpha
_{0}\right) \right) \exp \left( -\alpha _{0}\right) $ such that the inverse
of $\Lambda ,$ 
\begin{equation*}
\Lambda ^{-1}\left( g\right) =\alpha _{\zeta }^{-1}\left( \log \left(
g/\left( 1-g\right) \right) -\alpha _{0}\right)
\end{equation*}
when applied to $g_{ij}\left( \zeta \right) $ is equal to the norm $%
\left\Vert \zeta _{i}-\zeta _{j}\right\Vert .$ It follows that the set $%
\left\{ \zeta |g_{ij}\left( \zeta \right) \leq \Lambda \left( k\right)
\right\} $ is the set of values $\zeta $ where $\left\Vert \zeta _{i}-\zeta
_{j}\right\Vert \geq k.$
\end{example}

The example constitutes a special case where $g_{ij}\left( \zeta \right) $
depends on $i$ and $j$ only through the value of $\zeta .$ More generally, $%
g_{ij}$ could display heterogeneity beyond variation in $\zeta .$ In those
cases, $\Lambda $ may not be the exact inverse transformation. All that is
required of $\Lambda $ is that it satisfies Definition \ref{Def_Lambda}.

With the function $\Lambda $ defined in this way introduce the random
variables 
\begin{equation*}
w_{j,i,n}^{k}\left( \zeta \right) =v_{j,n}\left( \zeta \right) 1\left\{
g_{ij}\left( \zeta \right) \leq \Lambda \left( k\right) \right\}
\end{equation*}%
where the variable $w_{j,i,n}^{k}\left( \zeta \right) $ is the network
statistic $v_{j,n}\left( \zeta \right) $ of agent $j$ truncated by the event 
$1\left\{ g_{ij}\left( \zeta \right) \leq \Lambda \left( k\right) \right\} .$
For the model and definitions in Example \ref{Example_Lambda} it follows
that the truncating event corresponds to $\left\Vert \zeta _{i}-\zeta
_{j}\right\Vert \geq k$, in other words realizations of the location
distribution where $i$ and $j$ are separated by at least an amount $k.$

The next step in the argument now consists in defining a collection of
filtrations that contain information about network statistics of individuals
that are sufficiently distant from the current location, agent $i.$ If
dependence in the network is decaying with distrance then conditioning on
these network statistics should matter less and less as the distance is
increased. Controlling for the rate at which the dependence disappears
provides a way to describe dependence in the process that generates the
network statistics. Let 
\begin{equation}
\mathcal{B}_{i,n}^{k}=\sigma \left( w_{1,i,n}^{k}\left( \zeta \right)
,...,w_{i-1,i,n}^{k}\left( \zeta \right) ,w_{i+1,i,n}^{k}\left( \zeta
\right) ,...,w_{n,i,n}^{k}\left( \zeta \right) \right)
\label{Definition_B_i,n_k}
\end{equation}%
for all $n,k$ and $i\in \left\{ 1,...,n\right\} .$ Let $\left\Vert
.\right\Vert _{p}$ be the $L_{p}$ norm $\left\Vert x\right\Vert _{p}=\left(
\int \left\vert x\right\vert ^{p}dP\left( x\right) \right) ^{1/p}$ and $%
\left\Vert .\right\Vert _{p,\zeta }=\left( \int \left\vert x\right\vert
^{p}dP\left( x|\zeta \right) \right) ^{1/p}$ the $L_{p}$ norm on the
probability space $\left( \Omega ,\mathcal{F},Q_{\omega }\right) .$ Assume
that $E\left[ v_{i,n}\left( \zeta \right) \right] =\mu _{i,n}$ exists for
all $i$ and $n$ and that $\lim_{n\rightarrow \infty }E\left[ v_{i,n}\left(
\zeta \right) \right] =\lim_{n\rightarrow \infty }\mu _{i,n}=\mu _{i}$
exists for all $i.$ Now define the spatial mixing coefficients following
related definitions for time series processes, for example by McLeish
(1975), for all $n\geq 1,k\geq 0,$ and constants $c_{i}>0$ with $%
\sup_{i}c_{i}<\infty $ and $\sup_{i}\limfunc{Var}\left( v_{i,n}\left( \zeta
\right) |\zeta \right) \leq Kc_{i}$ for some constant $K,$%
\begin{equation}
\left\Vert \mu _{i,n}-E\left[ v_{i,n}\left( \zeta \right) |\mathcal{B}%
_{i,n}^{k}\right] \right\Vert _{2,\zeta }\leq c_{i}\psi _{i,k}\left( \zeta
\right) .\text{ }  \label{Mix1}
\end{equation}%
The fields $\mathcal{B}_{i,n}^{k}$ condition on information that is at least
a spatial distance of $\Lambda ^{-1}\left( k\right) $ from agent $i$ with
statistic $v_{i,n}\left( \zeta \right) .$ Since locations $\zeta $ are
random, the selection of agents $j$ into the conditioning set $\mathcal{B}%
_{i,n}^{k}$ is also random. The process $v_{i,n}\left( \zeta \right) $ is
called a spatial mixingale if $E\left[ \psi _{i,k}\left( \zeta \right) %
\right] \rightarrow 0$ as $k\rightarrow \infty .$

The motivation for the measure in (\ref{Mix1}) is that $\psi _{i,k}\left(
\zeta \right) =0$ if $v_{i,n}\left( \zeta \right) $ is independent of $%
\mathcal{B}_{i,n}^{k}.$ The criterion in (\ref{Mix1}) depends both on the
functional form of $v_{i,n}\left( \zeta \right) $ as well as the
distribution of $\zeta .$ To illustrate the connection consider the
following example

\begin{example}
\label{Example_Mixing}Let $d_{ij}$ be generated as in (\ref{Example_dij})
with $\epsilon _{ij}$ iid logistic and set $\alpha _{0}=0$ and $\alpha
_{\zeta }=-1$. Assume that 
\begin{equation}
\sup_{i}\sum_{j=1}^{\infty }P\left( \left\vert \zeta _{i}-\zeta
_{j}\right\vert \leq k\right) \leq K<\infty
\label{Example2_P_SumRestriction}
\end{equation}
for any $0\leq k<\infty $ and some $K>0.$ Consider the network statistic $%
v_{i,n}\left( \zeta \right) =\sum_{j=1}^{n}d_{ij}.$ Let $\mathcal{B}%
_{i,n}^{k}$ be defined as in (\ref{Definition_B_i,n_k}). Then, 
\begin{equation}
E\left[ \left\Vert \mu _{i,n}-E\left[ v_{i,n}\left( \zeta \right) |\mathcal{B%
}_{i,n}^{k}\right] \right\Vert _{2,\zeta }\right] \rightarrow 0
\label{Example_Convergence}
\end{equation}%
as $k\rightarrow \infty .$
\end{example}

The result in (\ref{Example_Convergence}) is established in Section \ref%
{ExampleProofs}. In most cases direct evaluation of $E\left[ .|\mathcal{B}%
_{i,n}^{k}\right] $ is too difficult. In those cases, approximations are an
alternative way of obtaining results. The proof of (\ref{Example_Convergence}%
) illustrates how this can be done. As the example illustrates, the
conditional nature of the mixing coefficients $\psi _{i,k}$ implies that
some restrictions on the distribution of $\zeta $ need to be imposed to make
the definitions useful. The nature of these restrictions depend on the type
of limiting result that is desired. To obtain a weak law of large numbers
the following restriction, which is a generalization of the assumption made
in (\ref{Example2_P_SumRestriction}) in Example \ref{Example_Mixing}, is
imposed on the joint distribution of $\zeta $ denoted by $P_{\zeta }$. Note
that $P_{\zeta }$ is the marginal distribution of $\zeta $ obtained from the
measure $P$ by integrating over the remaining components in $\chi .$

\begin{assume}
\label{Assume_Probability_Sum}Assume that $g_{ij}\left( \zeta \right)
=g_{ji}\left( \zeta \right) $ and $g_{ij}\left( \zeta \right) ^{-1}\leq
g_{ik}\left( \zeta \right) ^{-1}+g_{kj}\left( \zeta \right) ^{-1}.$ Assume
that there exists a nonstochastic function $\Lambda :\mathbb{R\rightarrow }%
\left[ 0,1\right] $ with $\Lambda \left( k\right) >\Lambda \left( k^{\prime
}\right) $ for all $k<k^{\prime }$ where $k,k^{\prime }\in \mathbb{R}$. Let $%
k_{m}$ be an increasing sequence of numbers $k_{m}\in \mathbb{R}$ for $m\in 
\mathbb{N}$ and define the disjoint events $A_{k_{m}}\left( i,j\right) $ as 
\begin{equation*}
A_{k_{m}}\left( i,j\right) =\left\{ \omega |\Lambda \left( k_{m}\right)
<g_{ij}\left( \zeta \right) \leq \Lambda \left( k_{m-1}\right) \right\}
\end{equation*}%
and 
\begin{equation*}
\Pr \left( A_{k_{m}}\left( i,j\right) \right) =\int_{A_{k_{m}}\left(
i,j\right) }dP_{\zeta }\left( \zeta \right) .
\end{equation*}%
such that for some $K<\infty ,$ and all $n$ 
\begin{equation}
\sum_{i=1}^{n}\frac{\log ^{2}\left( i+1\right) }{i^{2}}\sum_{j=i}^{n}%
\sum_{m=1}^{\infty }E\left[ \psi _{i,k_{m}}\left( \zeta \right)
|A_{k_{m}}\left( i,j\right) \right] \Pr \left( A_{k_{m}}\left( i,j\right)
\right) \leq K<\infty .  \label{psi_Summation}
\end{equation}
\end{assume}

The condition given in (\ref{psi_Summation}) is motivated by two
observations. The first is that the sets $A_{k_{m}}\left( i,j\right) $ for $%
m=1,...,\infty $ and $k_{0}=1$ constitute a partition of the sample sapce
for $\zeta $ such that $\Omega =\cup _{m=1}^{\infty }A_{k_{m}}\left(
i,j\right) $ for any $i$ and $j$ fixed and with $A_{k}\left( i,j\right) \cap
A_{k^{\prime }}\left( i,j\right) =\varnothing $ for any $k\neq k^{\prime }$.
It follows that the expectation $E\left[ \zeta \right] $ can be represented
as $E\left[ \zeta \right] =\sum_{m=1}^{\infty }E\left[ \zeta
|A_{k_{m}}\left( i,j\right) \right] \Pr \left( A_{k_{m}}\left( i,j\right)
\right) $ where $i$ and $j$ are arbitrary fixed integers. The second
component of (\ref{psi_Summation}) consists in an approximation of the
conditional covariance between $v_{i,n}\left( \zeta \right) $ and $%
v_{j,n}\left( \zeta \right) $ by the mixing coefficient $\psi
_{i,k_{m}}\left( \zeta \right) .$ By combining these two elements an upper
bound for the covariance between $v_{i,n}\left( \zeta \right) $ and $%
v_{j,n}\left( \zeta \right) $ is obtained in Lemma \ref{WeakLLN} below. The
covariance upperbound directly leads to a weak law of large numbers. As the
proof of the result in Example \ref{Example_Mixing} shows, functional form
restrictions on $v_{i,n}\left( \zeta \right) $ can be useful in bounding the
behavior of $\psi _{i,k}\left( \zeta \right) $ as $k$ tends to $\infty .$ In
addition, as the Example illustrates and as is evident from Assumption \ref%
{Assume_Probability_Sum}, it is the interplay between assumptions about the
functional form of $v_{i,n}\left( \zeta \right) $ and assumptions about the
distribution of $\zeta $ that in combination allow to control the dependence
in $v_{i,n}\left( \zeta \right) .$

The following example illustrates how (\ref{psi_Summation}) can be verified
in a simple case where network formation is driven by link specific
observables $\zeta _{ij}$ and limited to a neighborhood where $\left\vert
\zeta _{ij}\right\vert \leq 1.$ In other words, only individuals with link
characteristics $\zeta _{ij}$ that are within a certain range can connect.
The model is discussed in more detail in Section \ref{Section_Network_Model}
and is formalized as follows.

\begin{example}
\label{Example_NeighborhoodNetwork}Consider a network formation model for a
directed network where links are formed according to 
\begin{equation}
d_{ij}=1\left\{ -\left\vert \zeta _{ij}\right\vert +\epsilon _{ij}>0\right\}
1\left\{ \left\vert \zeta _{ij}\right\vert <\kappa _{u}\right\}
\label{Definition_dij_Neighborhood}
\end{equation}%
with $\kappa _{u}=1$ and $d_{ii}=0$ and where $\zeta _{ij}=\zeta _{ji}$ are
independently distributed random variables with uniform distribution on the
interval $[\left\vert i-j\right\vert -1,\left\vert i-j\right\vert ).$ Also, $%
\epsilon _{ij}$ is iid logistic, and in particular $\epsilon _{ij}$ is
independent of $\epsilon _{ji}$ implying that in general $d_{ij}\neq d_{ji}.$
Let $g_{ij}\left( \zeta \right) =H\left( -\left\vert \zeta
_{i}{}_{j}\right\vert \right) $ and $\Lambda \left( k\right) =H\left(
-k\right) .$ Also assume that 
\begin{equation}
\sup_{i}\left\vert \mu _{i,n}\right\vert +\left( E\left[ v_{i,n}^{2}\left(
\zeta \right) \right] \right) ^{1/2}\leq K<\infty  \label{EX_NN_MomentBound}
\end{equation}%
Then, it follows that $\psi _{i,k}\left( \zeta \right) =0$ for $k>1$ and
Condition (\ref{psi_Summation}) holds.
\end{example}

The proof of the first part of Example \ref{Example_NeighborhoodNetwork} is
given in Section \ref{ExampleProofs}. The second part of the assertion in
Example \ref{Example_NeighborhoodNetwork} can be understood as follows.
First note that for any $m^{\prime }<\infty $ the distribution of $\zeta
_{ij}$ satisfies 
\begin{equation}
\sup_{i}\sum_{j=i}^{n}\sum_{m=1}^{m^{\prime }}\Pr \left( A_{k_{m}}\left(
i,j\right) \right) =\sup_{i}\sum_{j=i}^{n}\Pr \left( \Lambda \left(
k_{m^{\prime }}\right) <g_{ij}\left( \zeta \right) \right) \leq K<\infty
\label{R_Prob_Sum_1}
\end{equation}%
where $1\left\{ \Lambda \left( k_{m^{\prime }}\right) <g_{ij}\left( \zeta
\right) \right\} =1\left\{ \left\vert \zeta _{ij}\right\vert \leq k\right\}
. $ To see this note that for any $i$ fixed, there are only at most $2\left(
k+1\right) +1$ values for $j$ for which $1\left\{ \left\vert \zeta
_{ij}\right\vert \leq k\right\} \neq 0$ with positive probability which
means that the sum $\sum_{j=i}^{n}\Pr \left( \left\vert \zeta
_{ij}\right\vert \leq k\right) \leq 2k+1.$

Choosing the sequence $k_{m}=m$ for simplicity it holds that%
\begin{equation*}
\psi _{i,k_{m}}\left( \zeta \right) =0
\end{equation*}%
for all $m>1.$ In addition, for $m=1$ one can choose $E\left[ \psi
_{i,1}\left( \zeta \right) \right] =\left\vert \mu _{i,n}\right\vert +\left(
E\left[ v_{i,n}^{2}\left( \zeta \right) \right] \right) ^{1/2}$ because,
using the H\"{o}lder inequality, 
\begin{equation*}
\left\Vert \mu _{i,n}-E\left[ v_{i,n}\left( \zeta \right) |\mathcal{B}%
_{i,n}^{1}\right] \right\Vert _{2,\zeta }\leq \left\vert \mu
_{i,n}\right\vert +\left( E\left[ v_{i,n}^{2}\left( \zeta \right) |\mathcal{B%
}_{i,n}^{1}\right] \right) ^{1/2}
\end{equation*}%
such that the bound follows from Jensen's inequality. These arguments show
that under the additional assumption that the moment bound in (\ref%
{EX_NN_MomentBound}) holds it follows that\textbf{\ } 
\begin{eqnarray}
&&\sum_{i=1}^{n}\frac{\log ^{2}\left( i+1\right) }{i^{2}}\sum_{j=i}^{n}%
\sum_{m=1}^{\infty }E\left[ \psi _{i,k_{m}}\left( \zeta \right)
|A_{k_{m}}\left( i,j\right) \right] \Pr \left( A_{k_{m}}\left( i,j\right)
\right)  \label{Remark_Probability_Summability} \\
&\leq &\sup_{i}\left( \sum_{j=1}^{\infty }\Pr \left( 1\left\vert \zeta
_{ij}\right\vert \leq 1\right) \right) \sum_{i=1}^{n}\frac{\log ^{2}\left(
i+1\right) }{i^{2}}\left( \left\vert \mu _{i,n}\right\vert +\left( E\left[
v_{i,n}^{2}\left( \zeta \right) \right] \right) ^{1/2}\right)  \notag \\
&\leq &K^{2}\sum_{i=1}^{n}\frac{\log ^{2}\left( i+1\right) }{i^{2}}<\infty .
\notag
\end{eqnarray}%
An interpretation of the summability condition in (\ref%
{Remark_Probability_Summability}) that 
\begin{equation}
\sum_{j=1}^{\infty }\Pr \left( \left\vert \zeta _{ij}\right\vert \leq
1\right) \leq K  \label{Characteristic_Dispersion}
\end{equation}%
can be obtained from the Borel-Cantelli Lemma. Individual $i$ has, with
probability one, at most finitely many neighbors located in an area
contained within a radius $1.$ In order to satisfy this condition
individuals need to be spread out sufficiently in characteristics space.
Assumption \ref{Assume_Probability_Sum} then provides a precise defintion of
sparsity.

\section{Laws of Large Numbers\label{Section_LLN}}

This section develops a number of laws of large numbers. The first result
establishes an upperbound for the covariance between $v_{i,n}\left( \zeta
\right) $ and $v_{j,n}\left( \zeta \right) .$ The form of the upper bound,
combined with Assumption \ref{Assume_Probability_Sum}, directly leads to a
weak law of large numbers (WLLN). While it may be natural to try to extend
the proofs of strong laws for mixingale time series to the current contect
an inspection of the proofs for example in McLeish (1975) indicate that
applying martingale methods directly to this context seems difficult. The
triangular array nature of $v_{i,n}\left( \zeta \right) $ as well as the
fact that the dependence structure in the data may change as $n$ increases
pose challenges that make it hard to develop an analog to Doob's inequality
(see Hall and Heyde, 1980, p. 20).

An alternative approach pursued here and also mentioned in Hall and Heyde,
(1980, p.22) is to use methods based on moment restrictions proposed by
Stout (1974). The framework in Stout requires some adjustments, most notably
an extension to triangular arrays of random variables which is first
provided. As it turns out, the moment inequality in Lemma \ref{WeakLLN} is
the key component needed to apply the insights from Stout (1974).

Before proceeding, unifrom bounds on the moments of $v_{i,n}\left( \zeta
\right) $ are imposed in the following assumption. These bounds are
necessary because of the role covariances play in the results that follow.

\begin{assume}
\label{Assume_Moments_Mixing}Assume that for some $\delta >0,$ $\sup_{i}E%
\left[ \left\vert v_{i,n}\left( \zeta \right) \right\vert ^{2+\delta }|\zeta %
\right] \leq K<\infty $ a.s. for all $n$ and $\sup_{i}\limfunc{Var}\left(
v_{i,n}\left( \zeta \right) |\zeta \right) \leq Kc_{i}$ for some constant $%
K. $
\end{assume}

The following weak law of large numbers can now be established.

\begin{lemma}
\label{WeakLLN}Let Assumptions \ref{Assume_Probability_Sum} and \ref%
{Assume_Moments_Mixing} hold. Assume that $\sup_{i}c_{i}\leq K$. Then%
\begin{equation}
\limfunc{Cov}\left( v_{i,n}\left( \zeta \right) ,v_{j,n}\left( \zeta \right)
\right) \leq 2Kc_{i}c_{j}\sum_{m=0}^{\infty }E\left[ \psi _{i,k_{m}}\left(
\zeta \right) |A_{k_{m}}\left( i,j\right) \right] P\left( A_{k_{m}}\left(
i,j\right) \right) .  \label{Cov_Bound}
\end{equation}%
For $S_{n}=\sum_{i=1}^{n}\left( v_{i,n}\left( \zeta \right) -\mu
_{i,n}\right) $ it follows that 
\begin{equation}
\limfunc{Var}\left( n^{-1/2}S_{n}\right) \leq
Kn^{-1}\sum_{i,j=1}^{n}c_{j}c_{i}\sum_{m=0}^{\infty }E\left[ \psi
_{i,k_{m}}\left( \zeta \right) |A_{k_{m}}\left( i,j\right) \right] P\left(
A_{k_{m}}\left( i,j\right) \right)  \label{VarS_Bound}
\end{equation}%
and%
\begin{equation*}
n^{-1}S_{n}\rightarrow _{p}0.
\end{equation*}
\end{lemma}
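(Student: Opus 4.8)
The plan is to establish the covariance bound (\ref{Cov_Bound}) first, then sum it to obtain (\ref{VarS_Bound}), and finally deduce $n^{-1}S_{n}\rightarrow_{p}0$ by Chebyshev's inequality. Fix $i,j$; since $g_{ij}\left( \zeta \right) =g_{ji}\left( \zeta \right) $ we may take $j\geq i$, the reverse case being symmetric. Because $k_{m}\uparrow $ and $\Lambda $ is strictly decreasing, the sets $\left\{ A_{k_{m}}\left( i,j\right) \right\} _{m\geq 1}$ partition $\Omega $ (up to a null set, as noted after Assumption \ref{Assume_Probability_Sum}), so
\[
\limfunc{Cov}\left( v_{i,n}\left( \zeta \right) ,v_{j,n}\left( \zeta \right) \right) =\sum_{m}E\left[ \left( v_{i,n}\left( \zeta \right) -\mu _{i,n}\right) \left( v_{j,n}\left( \zeta \right) -\mu _{j,n}\right) \mathbf{1}_{A_{k_{m}}\left( i,j\right) }\right] .
\]

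The core of the argument is the estimate of each summand. On $A_{k_{m}}\left( i,j\right) $ one has $g_{ij}\left( \zeta \right) \leq \Lambda \left( k_{m-1}\right) $, hence $\mathbf{1}\{g_{ij}\left( \zeta \right) \leq \Lambda \left( k_{m-1}\right) \}=1$ there, so that $v_{j,n}\left( \zeta \right) =w_{j,i,n}^{k_{m-1}}\left( \zeta \right) $ on $A_{k_{m}}\left( i,j\right) $, and $w_{j,i,n}^{k_{m-1}}$ is by construction $\mathcal{B}_{i,n}^{k_{m-1}}$-measurable. Since $g_{ij}$ is $\mathcal{Z}$-measurable, $\mathbf{1}_{A_{k_{m}}\left( i,j\right) }$ is a function of $\zeta $ and can be pulled outside $E\left[ \cdot |\zeta \right] $; replacing $v_{j,n}\left( \zeta \right) $ by $w_{j,i,n}^{k_{m-1}}\left( \zeta \right) $ where the indicator is one and using iterated expectations with respect to $\mathcal{B}_{i,n}^{k_{m-1}}$ transfers the conditioning onto the $v_{i,n}$-factor, yielding $E[ (w_{j,i,n}^{k_{m-1}}\left( \zeta \right) -\mu _{j,n})(E[ v_{i,n}\left( \zeta \right) |\mathcal{B}_{i,n}^{k_{m-1}}] -\mu _{i,n})|\zeta] $. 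The conditional Cauchy--Schwarz inequality in $\left\Vert \cdot \right\Vert _{2,\zeta }$ then bounds this by the product of $\Vert w_{j,i,n}^{k_{m-1}}\left( \zeta \right) -\mu _{j,n}\Vert _{2,\zeta }$, controlled by Assumption \ref{Assume_Moments_Mixing} (via $\limfunc{Var}(v_{j,n}\left( \zeta \right) |\zeta )\leq Kc_{j}$ together with the uniform $\left( 2+\delta \right) $-moment bound), and $\Vert E[ v_{i,n}\left( \zeta \right) |\mathcal{B}_{i,n}^{k_{m-1}}] -\mu _{i,n}\Vert _{2,\zeta }\leq c_{i}\psi _{i,k_{m-1}}\left( \zeta \right) $ by (\ref{Mix1}). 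Taking expectations over $\zeta $, writing $E[ \cdot \,\mathbf{1}_{A_{k_{m}}\left( i,j\right) }] =E[ \cdot \,|A_{k_{m}}\left( i,j\right) ] P\left( A_{k_{m}}\left( i,j\right) \right) $, summing over $m$, and re-indexing to absorb the shift $k_{m-1}\mapsto k_{m}$ gives (\ref{Cov_Bound}) with the constant $2K$.

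Summing (\ref{Cov_Bound}) over $i,j\in \left\{ 1,\dots ,n\right\} $ — the diagonal terms being bounded directly by $\limfunc{Var}(v_{i,n}\left( \zeta \right) )\leq Kc_{i}$ from Assumption \ref{Assume_Moments_Mixing} — and dividing by $n$ yields (\ref{VarS_Bound}). For the convergence claim, use $\sup_{i}c_{i}\leq K$ to bound $c_{i}c_{j}\leq K^{2}$, split the double sum into its lower- and upper-triangular parts, and invoke Assumption \ref{Assume_Probability_Sum}, in which the weights $\log ^{2}\left( i+1\right) /i^{2}$ appear precisely so that $\sum_{i=1}^{n}\sum_{j=i}^{n}\sum_{m}E[ \psi _{i,k_{m}}\left( \zeta \right) |A_{k_{m}}\left( i,j\right) ] P\left( A_{k_{m}}\left( i,j\right) \right) \leq K\,n^{2}/\log ^{2}\left( n+1\right) $ (since $i^{2}/\log ^{2}\left( i+1\right) $ is increasing, its maximum over $i\leq n$ pulls out the bound in (\ref{psi_Summation})). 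Hence $\limfunc{Var}(n^{-1/2}S_{n})=O\!\left( n/\log ^{2}\left( n+1\right) \right) $, so $\limfunc{Var}(n^{-1}S_{n})=n^{-1}\limfunc{Var}(n^{-1/2}S_{n})=O\!\left( 1/\log ^{2}\left( n+1\right) \right) \rightarrow 0$, and Chebyshev's inequality gives $n^{-1}S_{n}\rightarrow _{p}0$.

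I expect the main obstacle to be the bookkeeping across the two probability spaces: the mixingale coefficient is defined through the $\left\Vert \cdot \right\Vert _{2,\zeta }$-norm on $\left( \Omega ,\mathcal{F},Q_{\omega }\right) $, so the conditional expectations $E\left[ \cdot |\mathcal{B}_{i,n}^{k}\right] $ must be handled consistently with the conditioning on $\mathcal{Z}$, and the $\mathcal{Z}$-measurability of both $\mathbf{1}_{A_{k_{m}}\left( i,j\right) }$ and the truncation events $\{g_{ij}\left( \zeta \right) \leq \Lambda \left( k\right) \}$ must be used carefully to justify the ``pull-out'' and ``iterated-expectations'' steps. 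A secondary technical point is the interplay between the unconditional centering constants $\mu _{i,n}$ and the conditional means $E\left[ v_{i,n}\left( \zeta \right) |\zeta \right] $, which is what forces the uniform conditional-moment bounds of Assumption \ref{Assume_Moments_Mixing} into the estimate of the $w$-factor and supplies the dependence on $c_{j}$.
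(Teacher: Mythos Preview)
Your approach is essentially the paper's: partition via the sets $A_{k_m}(i,j)$, use that on $A_{k_m}(i,j)$ the factor $v_{j,n}(\zeta)$ coincides with $w_{j,i,n}^{k_{m-1}}(\zeta)\in\mathcal{B}_{i,n}^{k_{m-1}}$, transfer the conditioning to the $v_{i,n}$-factor, and apply conditional Cauchy--Schwarz together with (\ref{Mix1}). The paper phrases the middle step as a split $v_{i,n}-\mu_{i,n}=(v_{i,n}-E[v_{i,n}|\mathcal{B}_{i,n}^{k_m}])+(E[v_{i,n}|\mathcal{B}_{i,n}^{k_m}]-\mu_{i,n})$ and argues the first piece vanishes by measurability, but this is the same iterated-expectations device you use; your bookkeeping with $k_{m-1}$ is in fact the cleaner version of that measurability claim.

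One substantive difference worth flagging: for the weak law the paper asserts $\limfunc{Var}(n^{-1}S_n)=O(n^{-1})$ by pulling out $\sup_i\sum_{j}\sum_m E[\psi_{i,k_m}|A_{k_m}(i,j)]P(A_{k_m}(i,j))$ as a finite constant, which is slightly stronger than what (\ref{psi_Summation}) literally states. Your extraction of the factor $n^2/\log^2(n+1)$ from (\ref{psi_Summation}) is the honest rate that Assumption~\ref{Assume_Probability_Sum} delivers as written; it gives $\limfunc{Var}(n^{-1}S_n)=O(1/\log^2(n+1))$, which is weaker but still suffices for $n^{-1}S_n\to_p 0$. Both arguments are valid; yours simply trades a sharper rate for fidelity to the stated hypothesis.
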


The bounds in Lemma \ref{WeakLLN} can be used to establish a maximal
inequality, almost sure convergence results and a strong law of large
numbers. These results are derived by extending a maximual inequality due to
Stout (1974, Section 2.4) to triangular arrays. Let $\left\{ v_{i,l}\left(
\zeta \right) \right\} _{i=1}^{l}$ for $l=1,...$ be a triangular array and
use the short hand notation $v_{i,l}=v_{i,l}\left( \zeta \right) $ with the
convention that $v_{i,l}=0$ for $i>l.$ Define 
\begin{equation*}
M_{a,n}=\max_{a<k\leq n,l\geq 1}\left\vert \sum_{i=a+1}^{a+k}v_{i,l}-\mu
_{i,l}\right\vert
\end{equation*}%
where $\mu _{i,l}=E\left[ v_{i,l}\right] .$ Let $F_{a,n}=P_{n}\left(
v_{a+1,1},v_{a+1,2,}v_{a+2,2},v_{a+1,3}....,v_{a+n,n},...,v_{a+n,l},...%
\right) $ be the joint probability distribution of the random array $v_{i,l}$
for $i\leq a+n$ and $l>1.$ Impose the following additional restrictions.

\begin{assume}
\label{Assume_Triangular_Array}For $\tilde{v}_{i,n}=v_{i,n}\left( \zeta
\right) -\mu _{i,n}$ assume that 
\begin{equation}
\sup_{\left\{ m|m\geq n\right\} }\left\vert \tilde{v}_{i,m}-\tilde{v}%
_{i,n}\right\vert \leq u_{i,n}\left( n\log ^{2}\left( n+1\right) \right)
^{-1}  \label{Triangular_1}
\end{equation}%
with 
\begin{equation}
\underset{n\rightarrow \infty }{\lim \sup }\sum_{i=1}^{\infty }\left( \frac{E%
\left[ u_{i,n}^{2}\right] }{i\log ^{2}\left( i+1\right) }\right)
^{1/2}<\infty .  \label{Triangular_2}
\end{equation}
\end{assume}

Assumption \ref{Assume_Triangular_Array} covers two possible sampling
schemes. In one scheme, the network is generated on an infinite dimensional
sampling space as constructed above. Network statistics $v$ in that
framework do not change as the sample size $n$ increases. This implies that $%
v_{i,n}=v_{i,m}=v_{i}$ and $\mu _{i,n}=\mu _{i,m}=\mu _{i}$ for all $n,m$
and the conditions in (\ref{Triangular_1}) and (\ref{Triangular_2})
automatically hold for $u_{i}=0$ a.s. The second scenario covered by
Assumption \ref{Assume_Triangular_Array} is a situation where the network
structure changes as $n$ increases. This scenario corresponds to a situation
where new agents are randomly added to the network as $n$ grows, and thus
potentially are affecting the equilibrium network structure. The assumption
then restricts the effect additional agents have on the existing network
structure. As $n$ tends to infinity the effect needs to be negligible in a
way made precise in (\ref{Triangular_1}) and (\ref{Triangular_2}).

\begin{lemma}[Stout, 1974, Theorem 2.4.1]
\label{Lemma_Stout_2.4.1}Suppose that $g$ is a functional defined on the
joint distribution functions such that 
\begin{equation}
g\left( F_{a,k}\right) +g\left( F_{a+k,m}\right) \leq g\left(
F_{a,k+m}\right)  \label{S2.4.1}
\end{equation}%
for all $1\leq k<k+m$ and $a\geq 0,$%
\begin{equation}
E\left[ \left( \tsum\nolimits_{i=a+1}^{a+n}\tilde{v}_{i,l}\right) ^{2}\right]
\leq g\left( F_{a,n}\right)  \label{S2.4.2}
\end{equation}%
for all $l\geq 1,$ $n\geq 1$and $a\geq 0.$ Then%
\begin{equation}
E\left[ M_{a,n}^{2}\right] \leq \left( \log \left( 2n\right) /\log 2\right)
^{2}g\left( F_{a,n}\right)  \label{S2.4.3}
\end{equation}%
for all $n\geq 1$ and $a\geq 0.$
\end{lemma}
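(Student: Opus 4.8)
The plan is to prove this by a dyadic (``bisection'') chaining of partial sums, the classical route to Rademacher--Menshov type maximal inequalities, with the orthogonality that is usually invoked replaced by the superadditivity condition (\ref{S2.4.1}) and the second--moment control (\ref{S2.4.2}). Fix $l$ and $a$, and let $r=\lceil\log_{2}(2n)\rceil$, so that $n<2^{r}$ and every partial--sum index $k\in(0,n]$ admits a binary expansion using only the digits $2^{0},\dots,2^{r-1}$. The key algebraic observation is that for each such $k$ the block of indices $(a,a+k]$ decomposes into at most $r$ disjoint dyadic blocks, at most one at each level $j\in\{0,\dots,r-1\}$, a level-$j$ block having the form $(a+b2^{j},a+(b+1)2^{j}]$; moreover, for fixed $j$ the admissible level-$j$ blocks are pairwise disjoint and their union is an initial segment of $(a,a+n]$. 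Writing $D_{j}=\max_{B\text{ a level-}j\text{ block}}\bigl|\sum_{i\in B}\tilde v_{i,l}\bigr|$, the decomposition gives $\bigl|\sum_{i=a+1}^{a+k}\tilde v_{i,l}\bigr|\leq\sum_{j=0}^{r-1}D_{j}$ for every $k\in(0,n]$, so that $\max_{a<k\le n}\bigl|\sum_{i=a+1}^{a+k}\tilde v_{i,l}\bigr|\leq\sum_{j=0}^{r-1}D_{j}$, and by the Cauchy--Schwarz inequality $\bigl(\sum_{j=0}^{r-1}D_{j}\bigr)^{2}\leq r\sum_{j=0}^{r-1}D_{j}^{2}$.

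It then remains to bound $E[D_{j}^{2}]$ by $g(F_{a,n})$ for each $j$. Since $D_{j}^{2}\leq\sum_{B}\bigl(\sum_{i\in B}\tilde v_{i,l}\bigr)^{2}$, where the sum runs over the admissible level-$j$ blocks, taking expectations and applying (\ref{S2.4.2}) to each block gives $E[D_{j}^{2}]\leq\sum_{B}g(F_{B})$, with $F_{B}$ the relevant joint law. Because the level-$j$ blocks are disjoint and tile an initial segment $(a,a+n_{j}]$ of $(a,a+n]$, iterating the superadditivity inequality (\ref{S2.4.1}) yields $\sum_{B}g(F_{B})\leq g(F_{a,n_{j}})$; since (\ref{S2.4.1}) together with the nonnegativity of $g$ (which follows from (\ref{S2.4.2})) gives the monotonicity $g(F_{a,n_{j}})\leq g(F_{a,n})$, we obtain $E[D_{j}^{2}]\leq g(F_{a,n})$ for every $j$. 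Summing over the $r$ levels, $E\bigl[(\max_{a<k\le n}\bigl|\sum_{i=a+1}^{a+k}\tilde v_{i,l}\bigr|)^{2}\bigr]\leq r\cdot r\,g(F_{a,n})=r^{2}g(F_{a,n})\leq\bigl(\log(2n)/\log 2\bigr)^{2}g(F_{a,n})$.

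Finally, the index $l$: the entire argument is purely algebraic in the $\tilde v_{i,l}$ once $l$ is fixed, and both hypotheses (\ref{S2.4.1}) and (\ref{S2.4.2}) hold uniformly over $l\geq1$ with the same functional $g$, so the displayed bound holds with a constant free of $l$; passing to the joint maximum over $l$ that defines $M_{a,n}$ then introduces no new term, and in the applications of the lemma the behaviour of the sums across rows is in any case pinned down by Assumption \ref{Assume_Triangular_Array}. I expect the only delicate points to be bookkeeping: verifying the ``at most one block per level'' structure of the binary decomposition and the tiling property of same-level blocks, so that superadditivity can be iterated cleanly, and tracking the count of dyadic levels so that it is exactly $\lceil\log_{2}(2n)\rceil$, which is what produces the stated constant $(\log(2n)/\log 2)^{2}$ rather than a larger one.
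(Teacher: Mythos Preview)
Your dyadic chaining argument is a genuinely different route from the paper's. The paper follows Stout's original bisection induction: assuming the bound for all $n\le N$, one splits $(a,a+N]$ at the midpoint, writes
\[
M_{a,N}^{2}\le M_{a,N/2}^{2}+2\Bigl|\sum_{i=a+1}^{a+N/2}\tilde v_{i,l}\Bigr|\,M_{a+N/2,N/2}+M_{a+N/2,N/2}^{2},
\]
takes expectations, applies Cauchy--Schwarz together with (\ref{S2.4.2}) to the cross term and the induction hypothesis to the two squared maxima, and then uses $\log_{2}N+(\log_{2}N)^{2}\le(\log_{2}(2N))^{2}$ together with (\ref{S2.4.1}). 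Your Rademacher--Menshov decomposition reaches the same constant without induction; what it buys is a one-shot argument, while what the paper's induction buys is that the quantity being tracked throughout is $M_{a,n}$ itself, with the supremum over $l$ already built in.

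That last point is where your argument has a real gap. Your chaining is carried out for a \emph{fixed} row index $l$, and you then assert that ``passing to the joint maximum over $l$ \ldots\ introduces no new term.'' It does: a bound $E\bigl[\max_{k}|S_{k,l}|^{2}\bigr]\le C$ uniform in $l$ is not the same as $E\bigl[\max_{k,l}|S_{k,l}|^{2}\bigr]\le C$, and $M_{a,n}$ is defined with the maximum over $l$ inside the expectation. In the paper's proof this is handled structurally, because each inductive step bounds $E[M_{a,N}^{2}]$ in terms of $E[M_{a,N/2}^{2}]$ and $E[M_{a+N/2,N/2}^{2}]$, quantities that already contain the $l$-maximum; the only place a fixed-$l$ second moment enters is the single cross term, where (\ref{S2.4.2}) is invoked. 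Your argument, by contrast, needs $E[D_{j}^{2}]\le g(F_{a,n})$ with $D_{j}$ redefined to include $\max_{l}$, and that does not follow from (\ref{S2.4.2}). Appealing to Assumption~\ref{Assume_Triangular_Array} does not rescue the lemma as stated, since the lemma does not assume it.

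A minor bookkeeping point: with your choice $r=\lceil\log_{2}(2n)\rceil$ you have $r\ge\log_{2}(2n)$, so the final inequality $r^{2}\le(\log(2n)/\log 2)^{2}$ goes the wrong way. The correct level count is $r=\lfloor\log_{2}n\rfloor+1$, which is exactly the number of binary digits needed for $k\le n$ and satisfies $r\le 1+\log_{2}n=\log_{2}(2n)$.
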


Condition (\ref{S2.4.2}) plays a crucial role in establishing Lemma \ref%
{Lemma_Stout_2.4.1}. It provides a moment bound that is uniform over all
elements in the triangular array. The assumption is justified here in light
of Lemma \ref{WeakLLN} and Assumptions \ref{Assume_Probability_Sum} and \ref%
{Assume_Moments_Mixing}.

The next task consists in extending Stout (1974, Theorem 2.4.2) to the case
of triangular arrays satisfying the uniform boundedness conditions imposed
above. The following Lemma provides the necessary result.

\begin{lemma}[Stout, 1974, Theorem 2.4.2]
\label{Lemma_Stout_2.4.2}Let Assumption \ref{Assume_Triangular_Array} hold.
Suppose that $g$ is a functional defined on the joint distribution functions
satisfying the restrictions in (\ref{S2.4.1}) and (\ref{S2.4.2}). Further
assume that there exists a function $h$ such that 
\begin{equation*}
h\left( F_{a,k}\right) +h\left( F_{a+k,m}\right) \leq h\left(
F_{a,k+m}\right)
\end{equation*}%
for all $1\leq k<k+m$ and $a\geq 0,$ $h\left( F_{a,n}\right) \leq K<\infty $
for all $n\geq 1$ and $a\geq 0,$ and%
\begin{equation*}
g\left( F_{a,n}\right) \leq Kh\left( F_{a,n}\right) /\log ^{2}\left(
a+1\right)
\end{equation*}%
for all $n\geq 1$ and $a>0.$ Let $S_{n,m}=\sum_{i=1}^{m}\tilde{v}_{i,n}$ for
some sequence $m\leq n.$ Then, $S_{n,n}$ converges almost surely.
\end{lemma}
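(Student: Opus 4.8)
The plan is to follow the dyadic-blocking strategy behind Stout's Theorems 2.4.1--2.4.2, but to carry the triangular-array drift along via Assumption \ref{Assume_Triangular_Array} and, crucially, to apply the maximal inequality of Lemma \ref{Lemma_Stout_2.4.1} \emph{twice}.

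\textbf{Reduction to a dyadic subsequence.} Writing $\tilde{v}_{i,n}=v_{i,n}(\zeta)-\mu_{i,n}$, for $2^{r}\leq n<2^{r+1}$ I would decompose $S_{n,n}-S_{2^{r},2^{r}}=\sum_{i=2^{r}+1}^{n}\tilde{v}_{i,n}+\sum_{i=1}^{2^{r}}(\tilde{v}_{i,n}-\tilde{v}_{i,2^{r}})$. The first piece is a partial sum over the block $(2^{r},2^{r+1}]$ evaluated at a level $\geq 2^{r}$, hence is dominated by $M_{2^{r},2^{r}}$ (the maximum in Lemma \ref{Lemma_Stout_2.4.1} already runs over all levels $l$). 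The second piece is bounded, by (\ref{Triangular_1}), by $(2^{r}\log^{2}(2^{r}+1))^{-1}\sum_{i\leq 2^{r}}u_{i,2^{r}}$; applying Cauchy--Schwarz against $(i\log^{2}(i+1))^{1/2}$, then (\ref{Triangular_2}) together with the geometric gain $2^{-r/2}$, shows that $\sum_{r}$ of the expectations of these remainders is finite, so by Markov and Borel--Cantelli they are a.s.\ absolutely summable (in particular tend to $0$). The same estimate handles the drift terms appearing below. Hence it suffices to prove $\{S_{2^{r},2^{r}}\}_{r}$ converges a.s.\ and $M_{2^{r},2^{r}}\rightarrow 0$ a.s.

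\textbf{First blocking.} Applying Lemma \ref{Lemma_Stout_2.4.1} with $a=n=2^{r}$, (\ref{S2.4.3}) and $g(F_{a,n})\leq Kh(F_{a,n})/\log^{2}(a+1)$ give $E[M_{2^{r},2^{r}}^{2}]\leq (\log 2^{r+1}/\log 2)^{2}g(F_{2^{r},2^{r}})\leq K^{\prime}h(F_{2^{r},2^{r}})$, since $(r+1)^{2}/\log^{2}(2^{r}+1)$ is bounded. As the intervals $(2^{r},2^{r+1}]$ are disjoint and consecutive, superadditivity of $h$ and $h\leq K$ force $\sum_{r}h(F_{2^{r},2^{r}})\leq K$, so $\sum_{r}E[M_{2^{r},2^{r}}^{2}]<\infty$ and $M_{2^{r},2^{r}}\rightarrow 0$ a.s. This kills the within-block fluctuations.

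\textbf{Second blocking (the heart).} It remains to show the block sums $D_{r}:=\sum_{i\in(2^{r},2^{r+1}]}\tilde{v}_{i,2^{r+1}}$ have convergent partial sums a.s.; indeed, up to terms that converge a.s.\ by Assumption \ref{Assume_Triangular_Array}, $S_{2^{R+1},2^{R+1}}$ equals a constant plus $\sum_{r\leq R}D_{r}$. A single blocking does \emph{not} suffice: the bound $E[M_{2^{r},2^{r}}^{2}]\lesssim h(F_{2^{r},2^{r}})$ is only square-summable in $r$, so the block maxima cannot be added up. The point is that $\{D_{r}\}$ again satisfies the hypotheses of Lemma \ref{Lemma_Stout_2.4.1}: the functional assigning to an index range $(q,q^{\prime}]$ the value $g(F_{2^{q},2^{q^{\prime}}-2^{q}})$ is superadditive by (\ref{S2.4.1}), dominates $E[(\sum_{r=q+1}^{q^{\prime}}D_{r})^{2}]$ by (\ref{S2.4.2}), and --- because $h$ is \emph{bounded} --- is at most $K^{2}/\log^{2}(2^{q}+1)\asymp q^{-2}$ \emph{uniformly in} $q^{\prime}$. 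Applying Lemma \ref{Lemma_Stout_2.4.1} to $\{D_{r}\}$ on the dyadic blocks $(2^{s},2^{s+1}]$ of the index $r$ then gives second-level block maxima with squared expectation $\lesssim (s+1)^{2}2^{-2s}$, hence $\sum_{s}(E[\,\cdot\,])^{1/2}\lesssim\sum_{s}(s+1)2^{-s}<\infty$; so these second-level maxima are a.s.\ \emph{absolutely} summable. Decomposing any tail $\sum_{r=L+1}^{L^{\prime}}D_{r}$ into partial/full/partial second-level blocks bounds it by twice the tail of that summable series, which $\to 0$. Thus $\{\sum_{r\leq R}D_{r}\}_{R}$ is a.s.\ Cauchy, $\sum_{r}D_{r}$ converges a.s., and combined with the reduction step and the first blocking, $S_{n,n}$ converges a.s.

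The crux --- and the step I expect to be the main obstacle --- is exactly the recognition that the classical single dyadic blocking is too weak here and that a \emph{second} dyadic blocking repairs it, which works only because the $\log^{-2}(a+1)$ factor together with boundedness of $h$ makes the inherited moment functional decay like $a^{-2}$ uniformly in block length, strong enough to make the second-level block maxima absolutely rather than merely square summable. A secondary difficulty, running alongside everything, is the bookkeeping: every ``block sum'' secretly lives at a level drifting with $n$, so Assumption \ref{Assume_Triangular_Array} must be invoked at each stage, and (\ref{Triangular_2}) is precisely calibrated --- through Cauchy--Schwarz and the $2^{-r/2}$ dyadic gain --- to make all the resulting corrections a.s.\ absolutely summable.
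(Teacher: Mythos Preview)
Your approach is correct but departs from the paper's in how it establishes almost-sure convergence of the dyadic subsequence $\{S_{2^{r},2^{r}}\}$. The paper first shows $S_{n,n}$ is Cauchy in $L_{2}$ via $E[(\sum_{i=a+1}^{a+n}\tilde{v}_{i,a+n})^{2}]\leq g(F_{a,n})\leq K^{2}/\log^{2}(a+1)$, hence $S_{n,n}\to S$ in $L_{2}$; it then bounds $E[(S-S_{2^{k},2^{k}})^{2}]$ directly by writing $S-S_{2^{k},2^{k}}=\lim_{n}(S_{n,n}-S_{n,2^{k}})+\lim_{n}(S_{n,2^{k}}-S_{2^{k},2^{k}})$, the first piece controlled uniformly in $n$ by $g(F_{2^{k},n-2^{k}})\leq K^{2}/\log^{2}(2^{k}+1)$ and the second by Assumption \ref{Assume_Triangular_Array}. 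Since $\sum_{k}\log^{-2}(2^{k}+1)<\infty$, Borel--Cantelli gives $S_{2^{k},2^{k}}\to S$ a.s.; the maximal inequality is then used \emph{once}, only for the within-block fluctuations. You instead show the dyadic subsequence is a.s.\ Cauchy by applying Lemma \ref{Lemma_Stout_2.4.1} a \emph{second} time to the block sums $\{D_{r}\}$, exploiting that the inherited functional satisfies $\tilde{g}(q,q')\lesssim q^{-2}$ uniformly in $q'$, which makes the second-level dyadic maxima absolutely (not merely square) summable. This avoids ever naming the $L_{2}$ limit, at the cost of extra bookkeeping: because the $D_{r}$ live at varying levels $2^{r+1}$, each invocation of (\ref{S2.4.2}) on $\sum_{r=q+1}^{q'}D_{r}$ requires first moving to a common level and absorbing the drift via Assumption \ref{Assume_Triangular_Array}, exactly as you flag. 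The paper's route is shorter and identifies the limit; yours is more self-contained in that it reduces everything to repeated use of the maximal inequality.
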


The maximal inequality and almost sure convergence result are now direct
consequences of the modified limit laws in Lemmas \ref{Lemma_Stout_2.4.1}
and \ref{Lemma_Stout_2.4.2} which are extending Stout to triangular arrays.

\begin{theorem}
\label{Theorem_Maximal}Let Assumptions\ref{Assume_Probability_Sum}, \ref%
{Assume_Moments_Mixing} and \ref{Assume_Triangular_Array} hold. Let $%
M_{a,n}=\max_{a<k\leq n,l\geq 1}\left\vert \sum_{i=a+1}^{a+k}v_{i,l}\left(
\zeta \right) -\mu _{i,l}\right\vert .$ Then, for constants $c_{i}\geq 0,$ 
\begin{equation*}
E\left[ M_{a,n}^{2}\right] \leq \left( \log \left( 2n\right) /\log 2\right)
^{2}\sum_{i,j=a+1}^{a+n}c_{i}c_{j}\sum_{m=0}^{\infty }E\left[ \psi
_{i,k_{m}}\left( \zeta \right) |A_{k_{m}}\left( i,j\right) \right] P\left(
A_{k_{m}}\left( i,j\right) \right)
\end{equation*}%
for all $n\geq 1$ and $a\geq 0.$
\end{theorem}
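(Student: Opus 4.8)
The plan is to exhibit the right-hand side of the claimed inequality as the value $g(F_{a,n})$ of an admissible Stout functional and then invoke Lemma \ref{Lemma_Stout_2.4.1}. Concretely, take
\begin{equation*}
g\left( F_{a,n}\right) =2K\sum_{i,j=a+1}^{a+n}c_{i}c_{j}\sum_{m=0}^{\infty }E\left[ \psi _{i,k_{m}}\left( \zeta \right) |A_{k_{m}}\left( i,j\right) \right] P\left( A_{k_{m}}\left( i,j\right) \right) ,
\end{equation*}
with $K$ the constant of Lemma \ref{WeakLLN} and Assumption \ref{Assume_Moments_Mixing}. The first step is to record that this is a legitimate functional of $F_{a,n}$: each summand is determined by the law of the characteristics and of the array entries carrying indices in $\{a+1,\dots ,a+n\}$, and every summand is nonnegative, since $c_{i}\geq 0$, the probabilities are nonnegative, and $\psi _{i,k_{m}}\geq 0$ because it bounds the $L_{2}$ norm in (\ref{Mix1}).

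Second, I would check the superadditivity requirement (\ref{S2.4.1}). For $1\leq k<k+m$ and $a\geq 0$ the index blocks $\{a+1,\dots ,a+k\}$ and $\{a+k+1,\dots ,a+k+m\}$ are disjoint and both contained in $\{a+1,\dots ,a+k+m\}$, so the double sum defining $g(F_{a,k+m})$ contains termwise the double sums defining $g(F_{a,k})$ and $g(F_{a+k,m})$ together with the extra cross-block terms, all of them nonnegative; hence $g(F_{a,k})+g(F_{a+k,m})\leq g(F_{a,k+m})$.

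Third, the substantive step, I would verify the second-moment bound (\ref{S2.4.2}). For any row $l\geq 1$, because $E[\tilde{v}_{i,l}]=0$,
\begin{equation*}
E\left[ \left( \sum_{i=a+1}^{a+n}\tilde{v}_{i,l}\right) ^{2}\right] =\sum_{i,j=a+1}^{a+n}\limfunc{Cov}\left( v_{i,l}\left( \zeta \right) ,v_{j,l}\left( \zeta \right) \right) \leq g\left( F_{a,n}\right) ,
\end{equation*}
where the inequality applies the covariance bound (\ref{Cov_Bound}) of Lemma \ref{WeakLLN} to each ordered pair $(i,j)$ of row $l$ — this is just that lemma with $l$ in place of $n$, which is available since Assumptions \ref{Assume_Probability_Sum} and \ref{Assume_Moments_Mixing} hold for every row — and where, for a degenerate row, the convention $v_{i,l}=0$ for $i>l$ makes the omitted covariances vanish while the nonnegative right-hand side is unchanged. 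The point needing care is uniformity: the single functional $g(F_{a,n})$ must dominate $E[(\sum_{i=a+1}^{a+k}\tilde{v}_{i,l})^{2}]$ for every $a<k\leq n$ and every $l\geq 1$ entering $M_{a,n}$. Uniformity in $k$ is the same block-monotonicity used for superadditivity; uniformity in $l$ rests on the fact that the coefficients $c_{i},\psi _{i,k_{m}}$ and the probabilities on the right-hand side do not depend on the row index, so the bound in (\ref{Cov_Bound}) is itself row-free, with Assumption \ref{Assume_Triangular_Array} ensuring, when the network truly varies with sample size, that $M_{a,n}$ is a.s. finite and the supremum over $l$ is sensibly defined.

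Finally, Lemma \ref{Lemma_Stout_2.4.1} applies with this $g$ and gives $E[M_{a,n}^{2}]\leq (\log (2n)/\log 2)^{2}g(F_{a,n})$; rescaling $c_{i}$ by $(2K)^{1/2}$ (which preserves $\sup_{i}c_{i}<\infty $ and nonnegativity) absorbs the factor $2K$ and returns exactly the stated inequality. I expect the only real obstacle to be the bookkeeping in the third step — carrying the fixed-$n$ statement of Lemma \ref{WeakLLN} over to the whole triangular array and to all partial-sum lengths at once, so that one functional suffices in Stout's inequality; verifying admissibility, superadditivity, and the final rescaling is routine.
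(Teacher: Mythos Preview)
Your proposal is correct and follows essentially the same route as the paper: define $g(F_{a,n})$ as the double sum on the right-hand side (up to the constant $K$), verify superadditivity (\ref{S2.4.1}) from nonnegativity of all summands, verify the uniform-in-$l$ second-moment bound (\ref{S2.4.2}) via the covariance inequality (\ref{Cov_Bound}) of Lemma \ref{WeakLLN}, and invoke Lemma \ref{Lemma_Stout_2.4.1}. Your treatment is in fact more explicit than the paper's on two points the paper leaves implicit --- the row-uniformity of the covariance bound (the paper simply remarks that ``the right hand side does not depend on $l$'') and the absorption of the constant into the $c_i$ --- but the argument is the same.
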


The almost sure convergence result for the empirical sum $S_{n}$ below
requires implicit constraints on the upper bounds for the variance of $%
v_{i,n}.$ For convergence these variances need to decay to zero at certain
rates as implied by the condition in (\ref{Th_AS_Cond1}) below. For a strong
law of large numbers which is based on the almost sure convergence result,
these bounds on the variances are replaced with appropriate norming of $%
S_{n} $ as well constraints on the distribution of characteristics in
Assumption \ref{Assume_Probability_Sum}. The almost sure convergence result
is stated first.

\begin{theorem}
\label{Theorem_AlmostSure}Let Assumptions \ref{Assume_Moments_Mixing} and %
\ref{Assume_Triangular_Array} hold. Let $S_{n}=\sum_{i=1}^{n}\left(
v_{i,n}-\mu _{i,n}\right) .$ If there are constants $c_{i}$ is such that for
all $n\geq 1$ and $a\geq 0$%
\begin{equation}
\sum_{i=a+1}^{a+n}c_{i}\log ^{2}\left( i\right)
\sum_{j=i}^{a+n}c_{j}\sum_{m=0}^{\infty }E\left[ \psi _{i,k_{m}}\left( \zeta
\right) |A_{k_{m}}\left( i,j\right) \right] P\left( A_{k_{m}}\left(
i,j\right) \right) <\infty  \label{Th_AS_Cond1}
\end{equation}%
then $S_{n}\ $converges almost surely.
\end{theorem}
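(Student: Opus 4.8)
The strategy is to apply Lemma \ref{Lemma_Stout_2.4.2} with suitable choices of the functionals $g$ and $h$ and then extract almost sure convergence of $S_{n,n}=S_n$. First I would set
\[
g\left( F_{a,n}\right) =\sum_{i,j=a+1}^{a+n}c_{i}c_{j}\sum_{m=0}^{\infty }E\left[ \psi _{i,k_{m}}\left( \zeta \right) |A_{k_{m}}\left( i,j\right) \right] P\left( A_{k_{m}}\left( i,j\right) \right),
\]
so that $g$ is precisely the bound appearing in Lemma \ref{WeakLLN} and Theorem \ref{Theorem_Maximal}. Superadditivity (\ref{S2.4.1}) holds because the double index set $\{a+1,\dots,a+k+m\}^2$ contains the disjoint union of $\{a+1,\dots,a+k\}^2$ and $\{a+k+1,\dots,a+k+m\}^2$ and all summands are nonnegative; condition (\ref{S2.4.2}), namely $E[(\sum_{i=a+1}^{a+n}\tilde v_{i,l})^2]\le g(F_{a,n})$, is exactly the covariance-bound computation of Lemma \ref{WeakLLN} applied to the block $i\in\{a+1,\dots,a+n\}$ rather than $\{1,\dots,n\}$, using Assumption \ref{Assume_Moments_Mixing}. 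So Lemma \ref{WeakLLN}'s argument is reused verbatim on an arbitrary block.

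Next I would choose the dominating functional $h$. The natural candidate is
\[
h\left( F_{a,n}\right) =\sum_{i=a+1}^{a+n}c_{i}\log ^{2}\left( i\right) \sum_{j=i}^{a+n}c_{j}\sum_{m=0}^{\infty }E\left[ \psi _{i,k_{m}}\left( \zeta \right) |A_{k_{m}}\left( i,j\right) \right] P\left( A_{k_{m}}\left( i,j\right) \right),
\]
which is superadditive by the same disjoint-block reasoning, is bounded uniformly in $a,n$ by the hypothesis (\ref{Th_AS_Cond1}) (indeed the displayed sum in (\ref{Th_AS_Cond1}) is $h(F_{a,n})$ and is assumed finite uniformly), and dominates $g$ in the required sense: using $g_{ij}=g_{ji}$ to symmetrize the double sum in $g$ and the fact that for the cross terms $i<j$ one has $\log^2(i)\ge \log^2(a+1)$ when $i>a$, one gets $g(F_{a,n})\le K h(F_{a,n})/\log^2(a+1)$ for $a>0$. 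This is the one genuinely delicate bookkeeping step — matching the asymmetric single-index weighting in $h$ against the symmetric double sum in $g$ and producing the $1/\log^2(a+1)$ factor — and I expect it to be the main obstacle; it relies on the symmetry $g_{ij}(\zeta)=g_{ji}(\zeta)$ (hence of the $A_{k_m}(i,j)$ and the associated conditional expectations) so that each unordered pair is counted in a way that can be charged to the larger of its two indices.

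With $g$ and $h$ so chosen, Assumption \ref{Assume_Triangular_Array} supplies exactly the remaining hypotheses of Lemma \ref{Lemma_Stout_2.4.2} (the triangular-array negligibility conditions (\ref{Triangular_1}) and (\ref{Triangular_2})), and Assumption \ref{Assume_Moments_Mixing} gives the moment control used inside (\ref{S2.4.2}). Lemma \ref{Lemma_Stout_2.4.2} then yields that $S_{n,n}=\sum_{i=1}^{n}\tilde v_{i,n}=S_n$ converges almost surely, which is the claim. I would close by remarking that no constraint $\sup_i c_i\le K$ is needed here (unlike in Lemma \ref{WeakLLN}), since the decay is instead imposed directly through the summability hypothesis (\ref{Th_AS_Cond1}).
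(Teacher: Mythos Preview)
Your proposal is correct and follows essentially the same route as the paper: choose $g(F_{a,n})$ as the covariance bound from Lemma \ref{WeakLLN}, choose $h(F_{a,n})$ as the $\log^2(i)$-weighted version that coincides with the hypothesis (\ref{Th_AS_Cond1}), verify superadditivity and the comparison $g\le Kh/\log^2(a+1)$ via $\log^2(i)\ge\log^2(a+1)$ for $i\ge a+1$, and then invoke Lemma \ref{Lemma_Stout_2.4.2}. The only cosmetic difference is that the paper triangularizes the double sum (using $g_{ij}=g_{ji}$) before defining $g$, whereas you keep $g$ symmetric and defer the symmetrization to the comparison with $h$; you correctly flag this as the delicate bookkeeping step.
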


When Assumption \ref{Assume_Probability_Sum} holds then $c_{i}=i^{-1}$ is
sufficient since for 
\begin{equation*}
P_{ij}=\sum_{m=0}^{\infty }E\left[ \psi _{i,k_{m}}\left( \zeta \right)
|A_{k_{m}}\left( i,j\right) \right] P\left( A_{k_{m}}\left( i,j\right)
\right)
\end{equation*}%
it follows that 
\begin{equation*}
\sum_{i=a+1}^{a+n}i^{-1}\log ^{2}\left( i\right)
\sum_{j=i}^{a+n}j^{-1}P_{ij}\leq \sum_{i=a+1}^{a+n}i^{-2}\log ^{2}\left(
i\right) \sum_{j=i}^{a+n}P_{ij}<\infty
\end{equation*}%
satisfies the condition above.

Theorems \ref{Theorem_Maximal} and \ref{Theorem_AlmostSure} form the basis
for the following strong law of large numbers which critically hinges on the
bounds established in Lemma \ref{WeakLLN}. In particular, the bound in (\ref%
{VarS_Bound}) only depends on the sample size through the summation upper
bound. The result below summarizes the laws of large numbers covered in this
section.

\begin{theorem}
\label{Theorem_Stout_3.7.1}Let Assumptions \ref{Assume_Probability_Sum}, \ref%
{Assume_Moments_Mixing} hold. Assume that $\sup_{i}c_{i}<\infty .$ Let $%
S_{n}=\sum_{i=1}^{n}\left( v_{i,n}-\mu _{i,n}\right) .$ Then, $%
S_{n}/n\rightarrow _{p}0$. If in addition also Assumption \ref%
{Assume_Triangular_Array} holds then $S_{n}/n\rightarrow 0$ almost surely.
\end{theorem}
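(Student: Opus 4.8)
The plan is to assemble Theorem \ref{Theorem_Stout_3.7.1} from the three preceding results, checking only that their hypotheses are met under the stated assumptions and the normalization $c_i = i^{-1}$. The weak law part $S_n/n \to_p 0$ is essentially Lemma \ref{WeakLLN} itself: under Assumptions \ref{Assume_Probability_Sum} and \ref{Assume_Moments_Mixing} with $\sup_i c_i < \infty$, Lemma \ref{WeakLLN} already delivers $n^{-1}S_n \to_p 0$, so the first sentence requires essentially no new work beyond invoking that lemma. The substance is the almost-sure part.

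For the almost-sure statement, I would apply Theorem \ref{Theorem_AlmostSure} with the constants $c_i$ replaced by $i^{-1}c_i$ (or directly take the variance-bound constants to be $i^{-1}$ times the mixingale constants), so that the quantity controlling the variance of partial sums becomes, after the substitution, the series
\begin{equation*}
\sum_{i=a+1}^{a+n} i^{-1} c_i \log^{2}(i) \sum_{j=i}^{a+n} j^{-1} c_j \, P_{ij}, \qquad P_{ij} = \sum_{m=0}^{\infty} E\!\left[\psi_{i,k_m}(\zeta)\,\middle|\,A_{k_m}(i,j)\right] P\!\left(A_{k_m}(i,j)\right).
\end{equation*}
Using $\sup_i c_i < \infty$ and $j^{-1} \le 1$ this is bounded by a constant times $\sum_{i} i^{-2}\log^{2}(i) \sum_{j\ge i} P_{ij}$, which is exactly the quantity appearing in the left side of \eqref{psi_Summation} in Assumption \ref{Assume_Probability_Sum} (up to the harmless difference between $\log^2(i)$ and $\log^2(i+1)$ and the index shift by $a$), hence finite uniformly in $a$ and $n$. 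Thus condition \eqref{Th_AS_Cond1} holds, and Theorem \ref{Theorem_AlmostSure} gives that $S_n$ — now understood as the appropriately normalized sum $\sum_{i=1}^n i^{-1}(v_{i,n}-\mu_{i,n})$ — converges almost surely.

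The remaining step is to pass from almost-sure convergence of the normalized partial-sum process to $S_n/n \to 0$ a.s. Here I would invoke a Kronecker-lemma argument in the form used by Stout (1974, Theorem 3.7.1): if $\sum_{i} a_i^{-1}(v_{i,n}-\mu_{i,n})$ converges a.s. with $a_i = i$ increasing to infinity, then $a_n^{-1}\sum_{i=1}^n (v_{i,n}-\mu_{i,n}) \to 0$ a.s., i.e. $S_n/n \to 0$ a.s. The triangular-array wrinkle is handled by Assumption \ref{Assume_Triangular_Array}: the bounds \eqref{Triangular_1}–\eqref{Triangular_2} are precisely what allows the Kronecker/Stout argument to go through when $v_{i,n}$ and $\mu_{i,n}$ depend on $n$, by controlling $\sup_{m\ge n}|\tilde v_{i,m}-\tilde v_{i,n}|$; I would cite the triangular-array versions Lemma \ref{Lemma_Stout_2.4.1} and Lemma \ref{Lemma_Stout_2.4.2}, whose hypotheses \eqref{S2.4.1}–\eqref{S2.4.2} are verified by taking $g(F_{a,n})$ to be the covariance bound of Theorem \ref{Theorem_Maximal} (superadditivity in $n$ is immediate since the summands $c_ic_j P_{ij}$ are nonnegative) and $h(F_{a,n}) \equiv K$.

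The main obstacle I anticipate is not any single estimate but the bookkeeping of the normalization: one must be careful that the same set of constants $c_i$ plays three roles — bounding $\operatorname{Var}(v_{i,n}(\zeta)|\zeta)$ in Assumption \ref{Assume_Moments_Mixing}, appearing in the covariance bound \eqref{Cov_Bound} of Lemma \ref{WeakLLN}, and supplying the Kronecker weights — and that replacing $c_i$ by $i^{-1}c_i$ (equivalently, running everything at normalization $n$) keeps all three consistent while still leaving \eqref{Th_AS_Cond1} summable via \eqref{psi_Summation}. Once that alignment is fixed, the proof is just a chain of citations: Lemma \ref{WeakLLN} for the WLLN, then Theorem \ref{Theorem_Maximal} and Theorem \ref{Theorem_AlmostSure} (themselves consequences of Lemmas \ref{Lemma_Stout_2.4.1}–\ref{Lemma_Stout_2.4.2}) plus Kronecker's lemma for the SLLN.
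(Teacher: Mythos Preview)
Your overall strategy matches the paper's: the WLLN is Lemma \ref{WeakLLN} verbatim, and for the SLLN you normalize by $i^{-1}$, verify the resulting summability via \eqref{psi_Summation}, and pass through the triangular-array Stout machinery plus Kronecker. Two points need fixing.

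First, in the bound for the normalized series you write ``$j^{-1}\le 1$'' to arrive at $\sum_i i^{-2}\log^2(i)\sum_{j\ge i}P_{ij}$. That only gives one factor of $i^{-1}$; you need $j^{-1}\le i^{-1}$ for $j\ge i$ to produce the second factor of $i^{-1}$, exactly as in the paper's proof.

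Second, and more substantively, your choice $h(F_{a,n})\equiv K$ does not work: Lemma \ref{Lemma_Stout_2.4.2} requires the superadditivity $h(F_{a,k})+h(F_{a+k,m})\le h(F_{a,k+m})$, and a positive constant fails this (it would force $2K\le K$). The paper's choice is
\begin{equation*}
h(F_{a,n})=\sum_{i=a+1}^{a+n}\frac{\log^{2}(i)}{i^{2}}\sum_{j=i}^{a+n}\sum_{m=0}^{\infty}E\!\left[\psi_{i,k_{m}}(\zeta)\,\middle|\,A_{k_{m}}(i,j)\right]P\!\left(A_{k_{m}}(i,j)\right),
\end{equation*}
which is superadditive because all summands are nonnegative, bounded by $K$ via Assumption \ref{Assume_Probability_Sum}, and dominates $g(F_{a,n})\log^{2}(a+1)$ since $\log^{2}(i)/\log^{2}(a+1)\ge 1$ for $i\ge a+1$. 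This is precisely the $h$ already embedded in the proof of Theorem \ref{Theorem_AlmostSure}, so your route through that theorem is sound; only the explicit aside about $h\equiv K$ needs to be replaced.
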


The strong law is an extension of Theorem 3.7.1 in Stout (1974) in two
directions. One is that triangular arrays are covered by Theorem \ref%
{Theorem_Stout_3.7.1} while Stout does not consider triangular arrays. This
is achieved by imposing the additional stability condition in Assumption \ref%
{Assume_Triangular_Array}. The second direction in which the result is
extended is by giving explicit upper bounds in the conext of network models
for the maximal inequality that drives the strong law (cf. Stout, 1974,
Theorem 2.4.1). The upper bound is directly linked to the mixing
coefficients defined in (\ref{Mix1}) and summability restrictions on the
joint distribution of $\zeta $ in Assumption \ref{Assume_Probability_Sum}.
This leads to a set of more primitive conditions that can be checked for
specific models.

\section{Central Limit Theory\label{Section_CLT}}

The proof of the central limit theorem builds on the notion of spatial
mixing developed in (\ref{Mix1}). It uses ideas from two strands of the
probability literature. One is a blocking argument that was proposed by
Eberlein (1984) in a general setting and applied to time series processes
under mixing conditions. The idea consists in dividing the sample into
blocks that increase in size with total sample size but at a slower rate.
The blocks are separated by buffer zones of data that is being discarded for
the purpose of the proof. The buffer zones also grow in size, but at a
slower rate than the blocks that are being kept for the proof of the CLT.
Under regularity conditions, the blocks and buffer zones can be chosen in
such a way that the discarded data asymptotically does not affect the
limiting distribution and that the blocks of data that are being kept can be
treated as independent.

The proof of Eberlein rests on the concept of absolute regularity which
implies that blocks are ultimately independent in the total variation norm.
Bolthausen (1982) establishes a CLT for spatially mixing processes that
would lend themselves to similar arguments as in Eberlein (1984). However,
the concept of mixing may be difficult to verify in practice. Thus, in
addition to a blocking scheme the proofs in this paper use a second set of
tools developed in the probability literature and used to prove CLT's for
dependent processes. A product expansion of the characteristic function
implicit in the work of Salem and Zygmund (1947) is used in McLeish (1974,
Theorem 1) to establish sufficient conditions for a CLT. McLeish (1974) uses
the approach to establish a CLT for martingale difference arrays. His work
was subseqently extended to a stable CLT by showing weak $L_{1}$ convergence%
\footnote{%
See Aldous and Eagleson (1978) for a definition of weak $L_{1}$ convergence.}
of the characteristic function by Hall and Heyde (1980, Theorem 3.2).

In the case of martinagle difference arrays conditioning arguments can be
used to elimiate terms from the product expansion of the characteristic
function. The moment conditions implied by (\ref{Mix1}) then are an
extension of the martingale difference concept where the conditional mean
zero property only holds for sufficiently distant realizations of the
process. With this modification the exact cancellations in the
characteristic function expansion turn into approximate cancellations that
can be neglected asymptotically under the right conditions. It is
interesting to note that McLeish (1975) who was the first to prove a
mixingale central limit theorem chose an entirely different proof strategy
based on martingale approximations and requiring the use of maximal
inequalities. As for the strong laws of large numbers, this proof strategy
does not appear well suited for the current application. It appears that
using the McLeish (1974) proof strategy in the context of a mixingale
conditions is a new result.

The strategy of proving the CLT rests on partitioning the sample into sets
of observations which are contributing to the limiting distribution and sets
ob observations that serve as buffer zones and that are asymptotically
negligible. For this purpose, fix $N\ll n$ and choose a set of centers $%
q_{1},...,q_{N}$ where each $q_{i}\in \left\{ 1,...,n\right\} .$ Conditional
on the observed $\zeta $ and for a $k$ fixed and each $q_{i}$ choose a set $%
J_{k}\left( q_{i}\right) $ of indicies such that $J_{k}\left( q_{i}\right)
\subset \left\{ 1,...,n\right\} .$ The set $J_{k}\left( q_{i}\right) $ is
constructed by selecting elements from $\left\{ 1,...,n\right\} $ without
replacement such that for each $j\in J_{k}\left( q_{i}\right) $ it follows
that the distance between $q_{i}$ and $j$ is at most $\Lambda \left(
k\right) \leq g_{q_{i}j}\left( \zeta \right) ,$%
\begin{equation}
J_{k}\left( q_{i}\right) =\left\{ j\in \left\{ 1,...,n\right\} |\Lambda
\left( k\right) \leq g_{q_{i}j}\left( \zeta \right) \right\}  \label{Def_J}
\end{equation}%
Similarly, for some $h>k$ define a buffer zone of observations denoted by $%
T_{k,h}\left( q_{i}\right) $ around $q_{i}$ with the property that all $\tau
\in T_{k,h}\left( q_{i}\right) $ satisfy the restriction that $\Lambda
\left( h\right) \leq g_{q_{i}\tau }\left( \zeta \right) <\Lambda \left(
k\right) ,$%
\begin{equation}
T_{k,h}\left( q_{i}\right) =\left\{ \tau \in \left\{ 1,...,n\right\}
|\Lambda \left( h\right) \leq g_{q_{i}\tau }\left( \zeta \right) <\Lambda
\left( k\right) \right\} .  \label{Def_T}
\end{equation}
Ultimately, $N$ is increasing with $n,$ although at a slower rate, in such a
way that both $J_{k}\left( q_{i}\right) $ and $T_{k,h}\left( q_{i}\right) $
asymptotically contain the appropriate number of elements. An explicit
recursive algorithm of how to construct these sets is given below. The
algorithm requires the parameters $k$ and $h$ to increase with sample size
and lets $k$ and $h$ depend on the point of approximation $q_{i}.$ This is
made explicit below by using the notation $k_{n}^{i}$ and $h_{n}^{i}.$ The
sequences $k_{n}^{i}$ and $h_{n}^{i}$ are chose to guarantee that the
cardinality of $J_{k}\left( q_{i}\right) $ denoted by $\left\vert
J_{k}\left( q_{i}\right) \right\vert $ satisfyies $\left\vert
J_{k_{n}^{i}}\left( q_{i}\right) \right\vert =c_{J}n^{3/4}$ in large samples
and that the cardinality $\left\vert T_{k,h}\left( q_{i}\right) \right\vert $
of $T_{k,h}\left( q_{i}\right) $ satisfies $\left\vert
T_{k_{n}^{i},h_{n}^{i}}\left( q_{i}\right) \right\vert =c_{T}n^{1/4-\epsilon
}$ for some $\epsilon >0$ in large samples. The fact that $k$ and $h$ depend
on $i$ allows for heterogeneity, in particular local variation in the amount
of spatial clustering. Constructing these sets is important in practice for
two reasons. One is to check whether the regularity conditions of the CLT
can be satisfied for a particular model and the second, maybe even more
important reason is to construct valid standard errors.

Using the definition of the sets $J_{k}\left( q_{i}\right) $ and $%
T_{k,h}\left( q_{i}\right) $ is used to form the random variables 
\begin{equation}
X_{i,n}=\sum_{j\in J_{k}\left( q_{i}\right) }\left( v_{j,n}\left( \zeta
\right) -\mu _{j,n}\right) \text{ for }i=1,...,N  \label{Def_Xi}
\end{equation}%
and 
\begin{equation}
U_{i,n}=\sum_{j\in T_{k,h}\left( q_{i}\right) }\left( v_{j,n}\left( \zeta
\right) -\mu _{j,n}\right) \text{ for }i=1,...,N.  \label{Def_Ui}
\end{equation}%
It follows that for $S_{n}=\sum_{i=1}^{n}\left( v_{i,n}\left( \zeta \right)
-\mu _{i,n}\right) $ one obtains $S_{n}=\sum_{i=1}^{N}\left(
X_{i,n}+U_{i,n}\right) .$ The proof of the central limit theorem then
consists in establishing that $n^{-1/2}\sum_{i=1}^{N}X_{i,n}\rightarrow
_{L_{1}}N\left( 0,\eta ^{2}\right) $ for some possibly random variable $\eta 
$ and that $n^{-1/2}\sum_{i=1}^{N}U_{i,n}=o_{p}\left( 1\right) .$

Whether these two results can be established depends on two features of the
data-generating process of $v_{i,n}\left( \zeta \right) .$ One is the rate
at which mixing coefficients $\psi _{i,h}\left( \zeta \right) $ tend to zero
as $h\rightarrow \infty .$ The second is whether $h$ and $k$ can be chosen
as functions of the sample size $n$ in such a way that $\left\vert
J_{k}\left( q_{i}\right) \right\vert $ is large enough and $\left\vert
T_{k,h}\left( q_{i}\right) \right\vert $ is small enough such that the
approximation argument in (\ref{Def_Xi}) and (\ref{Def_Ui}) can be applied.
Being able to construct the two types of sets $J_{k}\left( q_{i}\right) $
and $T_{k,h}\left( q_{i}\right) $ with the required amount of data in turn
depends on the interaction between properties of the model captured by $\psi
_{i,h}\left( \zeta \right) $ and properties of the distribution $P_{\zeta }.$
Enough sparicity is required so that neighborhoods $J_{k}\left( q_{i}\right) 
$ of $q_{i}$ are not overcrowded as $n$ grows.

To formulate the central limit theorem a set of filtrations needs to be
defined. Let $\mathcal{C}$ be a $\sigma $-field that is common to all
agents. Common factors are assumed to be measurable with respect to $%
\mathcal{C}.$ Now define the filtrations 
\begin{eqnarray}
\mathcal{F}_{n}^{0} &=&\left\{ \Omega ,\emptyset \right\} \vee \mathcal{C},%
\text{ }  \label{Def_Fi} \\
\mathcal{F}_{n}^{i} &=&\sigma \left( v_{j,n}\left( \zeta \right) |j\in
J_{k}\left( q_{i}\right) \right) \vee \mathcal{F}_{n}^{i-1}  \notag
\end{eqnarray}%
for $i=1,...,N$ where $\mathcal{A}\vee \mathcal{B}$ stands for the smallest $%
\sigma $-field that contains both $\sigma $-fields $\mathcal{A}$ and $%
\mathcal{B}.$ The construction implies that $\mathcal{F}_{n}^{i}\subseteq 
\mathcal{F}_{n}^{i+1}$ and that $X_{i,n}$ is measurable with respect to $%
\mathcal{F}_{n}^{i}.$ By construction, the distance between any element of $%
X_{i,n}$ and any element in $\mathcal{F}_{n}^{i-1}$ measured in terms of $%
g\left( .\right) $ is at least $g_{ij}\left( \zeta \right) \leq \left(
\Lambda \left( h\right) ^{-1}-\Lambda \left( k\right) ^{-1}\right) ^{-1}.$
Since $\Lambda $ is monotonically decreasing in its argument it has an
inverse $\Lambda ^{-1}.$ Then, for $h^{\prime }$ such that $\Lambda
^{-1}\left( 1/\Lambda \left( h\right) -1/\Lambda \left( k\right) \right)
^{-1}=h^{\prime }$ it follows that $\mathcal{B}_{j,n}^{h^{\prime }}\supseteq 
\mathcal{F}_{n}^{i-1}$ for all $j\in J_{k}\left( q_{i}\right) .$ This
implies that $X_{i,n}$ is a mixingale sequence relative to $\mathcal{F}%
_{n}^{i}$ since by Lemma \ref{Lemma_Mix_Ineq} in Section \ref%
{Section_CLT_Proofs} it follows that 
\begin{equation}
E\left[ \left\Vert E\left[ X_{i,n}|\mathcal{F}_{n}^{i-1}\right] \right\Vert
_{2,\zeta }^{2}\right] \leq \sup_{j}\left\vert c_{j}\right\vert E\left[ \psi
_{h^{\prime }}\left( \zeta \right) ^{2}\left\vert J_{k}\left( q_{i}\right)
\right\vert \right] .  \label{Mixing_Bound}
\end{equation}%
For example, if $\left\vert J_{k}\left( q_{i}\right) \right\vert \leq
c_{J}n^{3/4}$ and $h$ is chosen such that $E\left[ \psi _{h^{\prime }}\left(
\zeta \right) ^{2}\right] =n^{-1-\delta }$ then the RHS of (\ref%
{Mixing_Bound}) is $O\left( n^{-1/4-\delta }\right) .$ It is worth noting
that $\psi _{h^{\prime }}\left( \zeta \right) $ on the RHS of (\ref%
{Mixing_Bound}) does not depend on $i.$ The mixing coefficients $\psi
_{h^{\prime }}\left( \zeta \right) $ only decrease to zero because of
increasing buffer zones. There is no usable spatial orientation in the
sequence $X_{i,n}$ other than the fact that these components are separated
by buffer zones of increasing size. By construction $X_{i,n}$ is measurable
with respect to $\mathcal{F}_{n}^{i+k}$ for $k\geq 0$ such that%
\begin{equation*}
\left\Vert X_{i,n}-E\left[ X_{i,n}|\mathcal{F}_{n}^{i+k}\right] \right\Vert
_{2,\zeta }=0.
\end{equation*}

The first step of establishing a CLT for $S_{n}$ consists in proving a
central limit theorem for $S_{n,x}=n^{-1/2}\sum_{i=1}^{N}X_{i,n}.$ The
argument is based on the proof for martingale difference sequence CLTs by
McLeish (1974) and an extension of McLeish's result by Hall and Heyde (1980,
Theorem 3.2) to stable convergence. McLeish (1975b) proves a functional
central limit theorem for mixingales using a technique based on differential
equations for characteristic functions developed by Billingley (1968). It is
not clear that this approach is applixable in this context because the
maximal inequality needed for the result is not invariant to re-ordering of
the sample. The latter is critical to the blocking scheme where $J_{k}\left(
q_{i}\right) $ and $T_{k,h}\left( q_{i}\right) $ generally are functions of
the sample size. Because of the same sample size dependent blocking scheme,
Condition 3.21 of Hall and Heyde requiring a certain nesting property for
the filtrations also does not hold in the current environment. By focusing
on a baseline filtration $\mathcal{C}$, Kuersteiner and Prucha (2013) prove
a version of the Hall and Heyde result that does not require their nesting
condition. Stability with regard to a baseline filtration $\mathcal{C}$ as
in Kuersteiner and Prucha is therefore established here as well. The problem
studied in this paper is purely cross-sectional and based on mixingale
rather than mds assumptions and thus differs significantly from the panel
setting with mds sequences considered in Kuersteiner and Prucha (2013). The
following proposition delivers a CLT for $S_{n,x}=n^{-1/2}%
\sum_{i=1}^{N}X_{i,n}.$

\begin{proposition}
\label{Theorem_CLT_Blocks}Let $S_{n,x}=n^{-1/2}\sum_{i=1}^{N}X_{i,n}$ with $%
\left\{ X_{i},\mathcal{F}_{n}^{i}\right\} _{i=1}^{N}$ as defined above.
Assume that \newline
(i) $\max_{i}\left\vert n^{-1/2}X_{i,n}\right\vert \rightarrow _{p}0$\newline
(ii) $n^{-1}\sum_{i=1}^{N}X_{i,n}^{2}\rightarrow _{p}\eta ^{2}$ where $\eta $
is $\mathcal{C}$-measurable.\newline
(iii) $E\left( n^{-1}\max_{i}\left\vert X_{i,n}^{2}\right\vert \right) $ is
bounded in $n,$\newline
(iv) $\sup_{i}\psi _{i,h}\left( \zeta \right) \leq \psi _{h}\left( \zeta
\right) $ and $\psi _{h}\left( \zeta \right) \geq \psi _{h^{\prime }}\left(
\zeta \right) $ a.s. for all $h\leq h^{\prime }.$ \newline
(v) There are sequences $k_{n},h_{n}$ and $h_{n}^{\prime }$ such that for $%
h_{n}^{\prime }=\Lambda ^{-1}\left( 1/\Lambda \left( h_{n}\right) -1/\Lambda
\left( k_{n}\right) \right) ^{-1}$ and $E\left[ \psi _{h_{n}^{\prime
}}\left( \zeta \right) ^{2}\right] =O\left( n^{-\left( 1+\delta \right)
}\right) $\newline
Then, $S_{n,x}\rightarrow ^{d}Z$ ($\mathcal{C}$-stably) where $E\left[ \exp
\left( itZ\right) \right] =E\left[ \exp \left( -1/2\eta ^{2}t^{2}\right) %
\right] .$
\end{proposition}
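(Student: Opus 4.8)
The plan is to establish this stable CLT by adapting the McLeish (1974) characteristic-function argument, in the form extended to stable convergence by Hall and Heyde (1980, Theorem 3.2) but with stability relative to the baseline field $\mathcal{C}$ as in Kuersteiner and Prucha (2013). Fix $t\in\mathbb{R}$. Set $Y_{i,n}=n^{-1/2}X_{i,n}$ and consider the product $\prod_{i=1}^{N}(1+itY_{i,n})$. The Salem--Zygmund / McLeish device writes $\exp\left(it\sum_{i=1}^N Y_{i,n}\right)=\prod_{i=1}^N(1+itY_{i,n})\cdot R_n$, where $R_n=\exp\left(it S_{n,x}\right)\prod_{i=1}^N(1+itY_{i,n})^{-1}$, and under the uniform asymptotic negligibility in (i) together with the Lindeberg-type control furnished by (ii) and (iii), one shows $R_n\exp\left(\tfrac{t^2}{2}\sum_i Y_{i,n}^2\right)\to_p 1$; combined with (ii) this gives $R_n\to_p\exp\left(\tfrac12\eta^2 t^2\right)$, exactly as in McLeish's Lemma. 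The target $E\left[\exp(itZ)\mathbf{1}_G\right]=E\left[\exp(-\tfrac12\eta^2t^2)\mathbf{1}_G\right]$ for all $G\in\mathcal{C}$ then reduces to showing
\[
E\left[\mathbf{1}_G\prod_{i=1}^N(1+itY_{i,n})\right]\to P(G),
\]
i.e. the product telescopes to $1$ in the limit against any $\mathcal{C}$-measurable indicator.

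The key steps, in order, are: (1) verify the tail/negligibility bookkeeping so that $\prod(1+itY_{i,n})$ is bounded in $L^1$ and the remainder $R_n$ behaves as above — this is where (i) and (iii) enter, via the standard estimate $\left|\prod(1+itY_{i,n})\right|^2\le\exp\left(t^2\sum Y_{i,n}^2\right)$ and truncation at $\max_i|Y_{i,n}|$; (2) expand $E\left[\mathbf{1}_G\prod_{i=1}^{N}(1+itY_{i,n})\right]$ recursively from $i=N$ downward, peeling off one factor at a time by conditioning on $\mathcal{F}_n^{i-1}$, and observe that $E\left[\mathbf{1}_G\,(1+itY_{i,n})\,W\right]=E\left[\mathbf{1}_G\,W\right]+it\,E\left[\mathbf{1}_G\,W\,E[Y_{i,n}\mid\mathcal{F}_n^{i-1}]\right]$ for any $\mathcal{F}_n^{i-1}$-measurable bounded $W$ (here $G\in\mathcal{C}\subseteq\mathcal{F}_n^0\subseteq\mathcal{F}_n^{i-1}$); (3) control the accumulated error from the cross terms $E[Y_{i,n}\mid\mathcal{F}_n^{i-1}]$, which is precisely where the mixingale bound (\ref{Mixing_Bound}) and assumptions (iv)–(v) do the work: by the nesting $\mathcal{B}_{j,n}^{h'_n}\supseteq\mathcal{F}_n^{i-1}$ and the conditional-Jensen argument behind (\ref{Mixing_Bound}),
\[
\sum_{i=1}^N E\left[\left\|E\left[Y_{i,n}\mid\mathcal{F}_n^{i-1}\right]\right\|_{2,\zeta}\right]
\le n^{-1/2}\sum_{i=1}^N\left(\sup_j|c_j|\,E\left[\psi_{h'_n}(\zeta)^2\,|J_{k}(q_i)|\right]\right)^{1/2},
\]
and with $|J_k(q_i)|\le c_J n^{3/4}$, $N=O(n^{1/4+\epsilon'})$, and $E[\psi_{h'_n}(\zeta)^2]=O(n^{-(1+\delta)})$ from (v), this sum is $O\left(n^{-1/2}\cdot n^{1/4}\cdot n^{3/8}\cdot n^{-(1+\delta)/2}\right)=o(1)$ for $\delta$ large enough; iterating the peeling then yields $E\left[\mathbf{1}_G\prod_{i=1}^N(1+itY_{i,n})\right]=P(G)+o(1)$; (4) assemble (1)–(3): $E\left[\mathbf{1}_G\exp(itS_{n,x})\right]=E\left[\mathbf{1}_G R_n\prod(1+itY_{i,n})\right]\to E\left[\mathbf{1}_G\exp(-\tfrac12\eta^2t^2)\right]$, using that $R_n-\exp(-\tfrac12\eta^2t^2)\to_p 0$ and uniform integrability from (iii), which is the definition of $\mathcal{C}$-stable convergence to the stated mixed Gaussian limit.

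The main obstacle I anticipate is step (3): unlike the pure martingale-difference case in McLeish (1974), the factors $1+itY_{i,n}$ do not telescope exactly, because $X_{i,n}$ is only an approximate martingale difference relative to $\{\mathcal{F}_n^i\}$ — the conditional mean $E[X_{i,n}\mid\mathcal{F}_n^{i-1}]$ is nonzero and bounded only by the mixingale coefficient via (\ref{Mixing_Bound}). One must show these residuals, summed over the $N$ blocks and multiplied by the bounded partial products carried along in the recursion, are asymptotically negligible; this requires carefully tracking that the partial products $\prod_{l<i}(1+itY_{l,n})$ stay bounded in $L^2$ (from step (1)) so that a Cauchy–Schwarz split of each cross term is legitimate, and then that the rate in (v) — with the specific cardinalities $|J_k(q_i)|=c_Jn^{3/4}$ and $|T_{k,h}(q_i)|=c_Tn^{1/4-\epsilon}$ forced by the blocking algorithm — is fast enough to kill the $N$-fold accumulation. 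A secondary technical point is the interchange of the $\|\cdot\|_{2,\zeta}$ (conditional on $\zeta$) norm with the outer expectation, but that is handled exactly as in the derivation of (\ref{Mixing_Bound}) and Lemma~\ref{Lemma_Mix_Ineq}.
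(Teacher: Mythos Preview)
Your plan follows essentially the same route as the paper: McLeish's product expansion, peeling off factors by conditioning on $\mathcal{F}_n^{i-1}$, Cauchy--Schwarz against the partial products, and the mixingale bound (\ref{Mixing_Bound}) to control the accumulated residuals. The rate bookkeeping in your step~(3) and the final assembly in step~(4) are correct in spirit.

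There is, however, a genuine gap in your step~(1). You claim $\prod_i(1+itY_{i,n})$ is bounded in $L^2$ via $\left|\prod(1+itY_{i,n})\right|^2\le\exp\bigl(t^2\sum Y_{i,n}^2\bigr)$ together with ``truncation at $\max_i|Y_{i,n}|$''. Conditions (i)--(iii) do \emph{not} deliver this: (ii) gives $\sum_i Y_{i,n}^2\to_p\eta^2$, but that says nothing about integrability of $\exp\bigl(t^2\sum Y_{i,n}^2\bigr)$, and truncating the \emph{maximum} does not bound the full sum. Without a uniform $L^2$ bound on the partial products, the Cauchy--Schwarz split in step~(3) is not legitimate, and the whole recursion collapses.

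The paper closes this gap with the Hall--Heyde running-sum truncation: first reduce to the case $P(\eta^2<C)=1$, then replace $X_{i,n}$ by $X_{i,n}^{\dag}=X_{i,n}\mathbf{1}\bigl\{n^{-1}\sum_{j<i}X_{j,n}^2\le 2C\bigr\}$. Two features of this device are essential and your plan misses both. First, the truncation bounds the exponent $\sum_{j<J_n}X_{j,n}^{\dag2}$ \emph{deterministically} by $2Cn$, leaving a single extra factor $(1+t^2X_{J_n,n}^{\dag2})$ whose expectation is controlled by (iii); this gives the uniform $L^2$ bound on $T_n^{\dag}(t)$ that your step~(3) needs. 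Second, the indicator $\mathbf{1}\{n^{-1}\sum_{j<i}X_{j,n}^2\le 2C\}$ is $\mathcal{F}_n^{i-1}$-measurable, so it pulls out cleanly when you condition on $\mathcal{F}_n^{i-1}$ in the peeling step and the mixingale bound (\ref{Mixing_Bound}) still applies to $X_{i,n}^{\dag}$ (this is the content of (\ref{X_dag_Mix}) in the paper). One then checks $P(X_{i,n}^{\dag}\neq X_{i,n}\text{ for some }i)\to0$ via (ii), so the truncated and untruncated sums have the same stable limit. Once you insert this truncation, the rest of your plan goes through.
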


The next step in the argument consists in combining the CLT for $S_{n,x}$
with an argument showing that $n^{-1/2}\sum_{i=1}^{N}U_{i,n}$ is
asymptotically negligible. The result rests on high level assumptions about
the asymptotic sizes of the sets $J_{k_{n}^{i}}\left( q_{i}\right) $ and $%
T_{k_{n}^{i},h_{n}^{i}}\left( q_{i}\right) .$ The size of these sets is
allowed to vary over different approximation points $q_{i}$ but ultimately
is required to settle at equivalent asymptotic sizes of $c_{J}n^{3/4}$ for $%
\left\vert J_{k_{n}^{i}}\left( q_{i}\right) \right\vert $ and $c_{T}n^{1/4}$
for $\left\vert T_{k_{n}^{i},h_{n}^{i}}\left( q_{i}\right) \right\vert $
where $c_{J}$ and $c_{T}$ are constants. An algorithm for choosing $%
J_{k_{n}^{i}}\left( q_{i}\right) $ and $T_{k_{n}^{i},h_{n}^{i}}\left(
q_{i}\right) $ is then required to provide a result that can be applied in
practice. This task is taken up after stating the following proposition.

\begin{proposition}
\label{Theorem_Spatial_Mixing}Assume the following. Let $S_{n,x}=n^{-1/2}%
\sum_{i=1}^{N}X_{i,n}$ with $\left\{ X_{i,n},\mathcal{F}_{n}^{i}\right\}
_{i=1}^{N}$ as defined above. Further assume that \newline
i) $\sup_{j}\left\vert v_{j,n}\left( \zeta \right) \right\vert \leq z\left(
\zeta \right) $ for all $n$ and $z\left( \zeta \right) $ is a random
variable with $E\left[ z\left( \zeta \right) ^{2+\delta }\right] <\infty $
for some $\delta >0$; \newline
ii) $n^{-1}\sum_{i=1}^{N}X_{i,n}^{2}\rightarrow _{p}\eta ^{2}$ where $\eta $
is $\mathcal{C}$-measurable; \newline
iii) For $\psi _{i,k}\left( \zeta \right) $ and $c_{i}$ defined in (\ref%
{Mix1}) assume that $\sup_{i}\psi _{i,k}\left( \zeta \right) \leq \psi
_{k}\left( \zeta \right) $ a.s. and $\sup_{i}c_{i}<\infty ;$\newline
iv) For arbitrary constants $0<c_{T}<\infty $ and $0<c_{J}<\infty $ let $%
N=n/\left( c_{T}\left\lfloor n^{1/4}\right\rfloor +c_{J}\left\lfloor
n^{3/4}\right\rfloor \right) $ and $J_{k}\left( q_{i}\right) $ defined in (%
\ref{Def_J}) there exists sequences $k_{n}^{i}$ such that 
\begin{equation*}
\sup_{i}\left\vert \frac{\left\vert J_{k_{n}^{i}}\left( q_{i}\right)
\right\vert }{n}N-1\right\vert \rightarrow 0\text{ a.s.}
\end{equation*}%
v) For $T_{k_{n}^{i},h_{n}^{i}}\left( q_{i}\right) $ defined in (\ref{Def_T}%
) and $\epsilon >0$ it follows that 
\begin{equation*}
\sup_{i}\left\vert \frac{\left\vert T_{k_{n}^{i},h_{n}^{i}}\left(
q_{i}\right) \right\vert }{n^{1/4-\epsilon }}-1\right\vert \rightarrow 0%
\text{ a.s.}
\end{equation*}%
\newline
vi) Let $\Lambda \ $be as given in Definition \ref{Def_Lambda}. For
sequences $k_{n}^{i},h_{n}^{i}$ satisfying (iv) and (v) let $k_{n},h_{n}$
and $h_{n}^{\prime }$ with $h_{n}>k_{n}$\textbf{\ }be such that $1/\Lambda
\left( h_{n}\right) -1/\Lambda \left( k_{n}\right) \leq \inf \left(
1/\Lambda \left( h_{n}^{i}\right) -1/\Lambda \left( k_{n}^{i}\right) \right) 
$ and $h_{n}^{\prime }=\Lambda ^{-1}\left( \left( 1/\Lambda \left(
h_{n}\right) -1/\Lambda \left( k_{n}\right) \right) ^{-1}\right) .$ Then%
\textbf{\ }it follows that $E\left[ \psi _{h_{n}^{\prime }}\left( \zeta
\right) \right] =O\left( n^{-1+\delta }\right) .$\newline
If (i)-(vi) hold then, 
\begin{equation*}
n^{-1/2}S_{n}\rightarrow _{d}N\left( 0,\eta ^{2}\right) \text{ }\mathcal{C}%
\text{-stably}.
\end{equation*}
\end{proposition}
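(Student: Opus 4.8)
The plan is to exploit the block decomposition $S_n=\sum_{i=1}^N\left(X_{i,n}+U_{i,n}\right)$ recorded above, so that $n^{-1/2}S_n=S_{n,x}+n^{-1/2}\sum_{i=1}^N U_{i,n}$, and to prove separately that (a) $S_{n,x}\rightarrow_d N\left(0,\eta^2\right)$ $\mathcal{C}$-stably and (b) $n^{-1/2}\sum_{i=1}^N U_{i,n}\rightarrow_p 0$. Since $\mathcal{C}$-stable convergence is preserved under addition of an $o_p(1)$ term, (a) and (b) together yield the conclusion.

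For (b) I would use a crude envelope bound. The buffer-zone index sets $T_{k_n^i,h_n^i}\left(q_i\right)$ are disjoint across $i$; by (iv) the number of centers $N$ is of order $n^{1/4}$, and by (v) $\sup_i\left\vert T_{k_n^i,h_n^i}\left(q_i\right)\right\vert$ is of order $n^{1/4-\epsilon}$ a.s., so $\sum_i\left\vert T_{k_n^i,h_n^i}\left(q_i\right)\right\vert=O\left(n^{1/2-\epsilon}\right)$ a.s. Since $\left\vert v_{j,n}\left(\zeta\right)-\mu_{j,n}\right\vert\le z\left(\zeta\right)+E\left[z\left(\zeta\right)\right]$ by (i), it follows that $n^{-1/2}\left\vert\sum_i U_{i,n}\right\vert\le C\left(z\left(\zeta\right)+E\left[z\left(\zeta\right)\right]\right)n^{-\epsilon}$ a.s. for a constant $C$, which tends to zero because $z\left(\zeta\right)<\infty$ a.s.

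For (a) the strategy is to invoke Proposition \ref{Theorem_CLT_Blocks}, reducing the task to deriving its hypotheses (i)--(v) from the present hypotheses (i)--(vi). Condition (ii) of Proposition \ref{Theorem_CLT_Blocks} is the present condition (ii). Its condition (iv) follows from the first part of the present (iii) together with the remark that $\psi_{i,k}\left(\zeta\right)$ may be taken non-increasing in $k$: enlarging $k$ only coarsens $\mathcal{B}_{i,n}^k$, since $\Lambda$ is decreasing, and $\left\Vert\mu_{i,n}-E\left[v_{i,n}\left(\zeta\right)|\mathcal{B}_{i,n}^k\right]\right\Vert_{2,\zeta}$ is non-increasing under such coarsening by the contraction property of conditional expectations. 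Its condition (v) follows from the present (vi): with $h_n'=\Lambda^{-1}\left(\left(1/\Lambda\left(h_n\right)-1/\Lambda\left(k_n\right)\right)^{-1}\right)$ as chosen there, the constraint $1/\Lambda\left(h_n\right)-1/\Lambda\left(k_n\right)\le\inf_i\left(1/\Lambda\left(h_n^i\right)-1/\Lambda\left(k_n^i\right)\right)$ together with monotonicity of $\psi_h\left(\zeta\right)$ makes $\psi_{h_n'}\left(\zeta\right)$ dominate the relevant mixing coefficient uniformly over all blocks, and $0\le\psi_{h_n'}\left(\zeta\right)\le 1$ gives $E\left[\psi_{h_n'}\left(\zeta\right)^2\right]\le E\left[\psi_{h_n'}\left(\zeta\right)\right]$, which is of the required order by (vi). Conditions (i) and (iii) of Proposition \ref{Theorem_CLT_Blocks} --- that $\max_i\left\vert n^{-1/2}X_{i,n}\right\vert\rightarrow_p 0$ and that $E\left(n^{-1}\max_i X_{i,n}^2\right)$ is bounded in $n$ --- I would derive from hypothesis (ii): $n^{-1}\max_i X_{i,n}^2\le n^{-1}\sum_i X_{i,n}^2\rightarrow_p\eta^2$ handles the in-probability boundedness, $E\left(n^{-1}\max_i X_{i,n}^2\right)\le n^{-1}\sum_i E\left[X_{i,n}^2\right]$ together with a uniform second-moment bound on the block sums handles the $L_1$ statement, and the strengthening to $\max_i\left\vert n^{-1/2}X_{i,n}\right\vert\rightarrow_p 0$ follows by a union/Markov bound over the $N$ blocks, using that by (iv) each block carries a vanishing fraction, of order $n^{-1/4}$, of the sample while $N\rightarrow\infty$. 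Once hypotheses (i)--(v) of Proposition \ref{Theorem_CLT_Blocks} are in place, that proposition gives $S_{n,x}\rightarrow_d N\left(0,\eta^2\right)$ $\mathcal{C}$-stably.

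I expect the main obstacle to be the verification of conditions (i) and (iii) of Proposition \ref{Theorem_CLT_Blocks}. Unlike in a classical blocking argument, the block sum $X_{i,n}$ aggregates precisely those $v_{j,n}\left(\zeta\right)$ lying within distance $\Lambda^{-1}\left(k_n^i\right)$ of the center $q_i$, which may be strongly mutually correlated, so the needed smallness of $n^{-1}\max_i X_{i,n}^2$ cannot be read off the envelope $z\left(\zeta\right)$ alone; it has to be extracted from the averaging built into hypothesis (ii) --- equivalently from the covariance summability it encodes, which ties back to the covariance bound of Lemma \ref{WeakLLN} --- and from the fact that, by (iv), $N\rightarrow\infty$ while $\left\vert J_{k_n^i}\left(q_i\right)\right\vert$ is of order $n^{3/4}$. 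A secondary, essentially bookkeeping, difficulty is keeping (iv)--(vi) mutually consistent uniformly over the heterogeneous centers $q_i$: one single sequence $h_n'$ must simultaneously keep the buffers thin enough that $\sum_i\left\vert T_{k_n^i,h_n^i}\left(q_i\right)\right\vert=o(n)$ and make the induced mixing lag large enough that $E\left[\psi_{h_n'}\left(\zeta\right)^2\right]$ decays faster than $1/n$ after multiplication by $\left\vert J_{k_n^i}\left(q_i\right)\right\vert$ and $N$ in the mixingale bound (\ref{Mixing_Bound}) --- which is exactly what the explicit construction algorithm accompanying the proposition must deliver.
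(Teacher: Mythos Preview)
Your overall architecture is exactly the paper's: decompose $n^{-1/2}S_n=S_{n,x}+n^{-1/2}\sum_i U_{i,n}$, kill the buffer contribution, and feed $S_{n,x}$ into Proposition~\ref{Theorem_CLT_Blocks}. Your treatment of the buffers is in fact cleaner than the paper's: the paper bounds $P(|n^{-1/2}\sum_i U_{i,n}|>\varepsilon)$ by a $(2+\delta)$-moment Markov inequality and Stout's norm inequality, whereas your pathwise envelope $|v_{j,n}(\zeta)-\mu_{j,n}|\le z(\zeta)+E[z(\zeta)]$ combined with $\sum_i|T_{k_n^i,h_n^i}(q_i)|=O(n^{1/2-\epsilon})$ a.s.\ gives the conclusion directly and more simply.

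The gap is in how you propose to verify conditions (i) and (iii) of Proposition~\ref{Theorem_CLT_Blocks}. You try to extract $\max_i|n^{-1/2}X_{i,n}|\rightarrow_p 0$ and the $L_1$ bound on $n^{-1}\max_i X_{i,n}^2$ from hypothesis~(ii), i.e.\ from $n^{-1}\sum_i X_{i,n}^2\rightarrow_p\eta^2$, plus a union/Markov bound. That does not work: convergence of $n^{-1}\sum_i X_{i,n}^2$ by itself is compatible with one block carrying all the mass (take $X_{1,n}^2=\eta^2 n$, $X_{i,n}=0$ for $i\ge 2$), and a plain Markov/union bound over $N\asymp n^{1/4}$ blocks using only second moments of $X_{i,n}$ gives nothing, since $E[X_{i,n}^2]$ can be of order $n^{3/4}$. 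The paper does \emph{not} go through hypothesis~(ii) here at all. It invokes a separate technical lemma (Lemma~\ref{Lemma_Suff_1}), whose input is precisely the $(2+\delta)$-moment envelope in hypothesis~(i): from $|v_{j,n}(\zeta)|\le z(\zeta)$ with $E[z(\zeta)^{2+\delta}]<\infty$ one gets $\sup_j E[|v_{j,n}(\zeta)|^{2+\delta}\,|\,\zeta]<\infty$ a.s., and then a Lindeberg-type truncation argument using the exponent $2+\delta$, together with $\sum_i|J_{k_n^i}(q_i)|\le n$, yields both $\max_i|n^{-1/2}X_{i,n}|\rightarrow_p 0$ and uniform integrability of $n^{-1}\sum_i X_{i,n}^2$ (hence the $L_1$ bound). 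So the missing idea is that conditions (i) and (iii) of Proposition~\ref{Theorem_CLT_Blocks} are driven by the higher-moment envelope in hypothesis~(i), not by the second-moment averaging in hypothesis~(ii); once you route the argument through hypothesis~(i) as the paper does, the obstacle you flag in your final paragraph disappears.
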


In practice the usefulness of Proposition \ref{Theorem_Spatial_Mixing}
depends on the ability to estimate $\eta $ consistently so that confidence
intervals and test statistics can be formed. In addition, to verify the
regularity conditions for specific models one needs to check if blocks of
data $J_{k_{n}^{i}}\left( q_{i}\right) $ and $T_{k_{n}^{i},h_{n}^{i}}\left(
q_{i}\right) $ can indeed be constructed in a way that the regularity
conditions in Assumptions (iv)-(vi) of Proposition \ref%
{Theorem_Spatial_Mixing} hold. An example of how this is done for a
particular model is discussed in Section \ref{Section_Network_Model}.
However, in most empirical settings explicit verification of regularity
conditions is not practical. An alternative approach consists in
constructing sets $J_{k_{n}^{i}}\left( q_{i}\right) $ and $%
T_{k_{n}^{i},h_{n}^{i}}\left( q_{i}\right) $ in a way that satisfies the
asymptotic size constraints of Assumptions (iv) and (v) by construction. The
question whether Condition (vi) above holds for these sets then can be
answered or be left open depending on the circumstances and focus of the
analysis.

An explicit algorithm to construct the sets $J_{k_{n}^{i}}\left(
q_{i}\right) $ and $T_{k_{n}^{i},h_{n}^{i}}\left( q_{i}\right) $ is
presented now. Part of the notation used is inspired by the treatment of
nearest neighbors in Abadie and Imbens (2006, p. 239). The first step of the
argument assumes that $k$ and $h$ are fixed and describes how to choose sets 
$J_{k}\left( q_{i}\right) $ and $T_{k,h}\left( q_{i}\right) $ and
approximation centers $q_{i}\in \left\{ q_{1},...,q_{N}\right\} $ with each $%
q_{i}\in \left\{ 1,...,n\right\} .$ Subsequently, $k$ and $h$ are adjusted
so that the sets have the required number of elements.

For each agent $i$ in the sample create an ordered index of agents that are
close in terms of the $g_{ij}\left( \zeta \right) $ norm. For each $i\in
\left\{ 1,...,n\right\} $ and $m\in \left\{ 1,...,n\right\} $ define 
\begin{equation}
j_{m}\left( i\right) =\sum_{j=1}^{n}j1\left\{ \left(
\tsum\nolimits_{l=1}^{n}1\left\{ g_{il}\left( \zeta \right) \geq
g_{ij}\left( \zeta \right) \right\} \right) =m\right\} .  \label{Algo_1}
\end{equation}%
The index $j_{m}\left( i\right) $ locates the $m$-th closest neighbor of $i$
in terms of the metric $g_{ij}.$ Note that each $j_{m}\left( i\right) $ is a 
$\mathcal{Z}$-measurable function and thus a random variable that depends on 
$\zeta $. There are a total of $n^{2}$ indices $j_{m}\left( i\right) $
constructed in this way: for each $i\in \left\{ 1,...,n\right\} $ there is a
set of indices $\left\{ j_{1}\left( i\right) ,...,j_{n}\left( i\right)
\right\} .$ Using this construction create sets of ordered indices for each
agent, $J\left( i\right) =\left\{ j_{1}\left( i\right) ,...,j_{n}\left(
i\right) \right\} .$ Define the notation $v_{j_{m}\left( i\right) ,n}\left(
\zeta \right) =\sum_{l=1}^{n}v_{l,n}\left( \zeta \right) 1\left\{
l=j_{m}\left( i\right) \right\} $ to denote the sample observation related
to the $m$-th closest neighbor of $i.$ Now define the following sets
recursively. Fix $k,h\in \mathbb{Z}$ and $k<h$ throughout the recursion.
Start the recursion with $q_{1}=1$ and define $l_{1}=\sum_{j=1}^{n}1\left\{
g_{q_{1}j}\left( \zeta \right) \geq \Lambda \left( k\right) \right\} $ as
the number of elements within $\Lambda \left( k\right) $ distance of $q_{1}$
and $r_{1}=\sum_{j=1}^{n}1\left\{ \Lambda \left( k\right) >g_{q_{1}j}\left(
\zeta \right) \geq \Lambda \left( h\right) \right\} $ as the number of
elements in the buffer zone of $q_{1}$. Both $l_{1}$ and $r_{1}$ and $%
j_{l_{1}}\left( i\right) $ as well as $j_{l_{1}+r_{1}}\left( i\right) $ are $%
\mathcal{Z}$-measurable. Then set 
\begin{eqnarray}
J_{k}\left( q_{1}\right) &=&\left\{ j_{1}\left( q_{1}\right)
,...,j_{l_{1}}\left( q_{1}\right) \right\}  \label{Algo_2} \\
T_{k,h}\left( q_{1}\right) &=&\left\{ j_{l_{1}+1}\left( q_{1}\right)
,...,j_{l_{1}+r_{1}}\left( q_{1}\right) \right\}  \label{Algo_3} \\
I_{1} &=&J_{k}\left( q_{1}\right) \cup T_{k,h}\left( q_{1}\right) .
\label{Algo_4}
\end{eqnarray}%
The set $J_{k}\left( q_{1}\right) $ is the set of all indices of agents
within a distance $\Lambda \left( k\right) $ of agent $q_{1}.$ The set $%
T_{k,h}\left( q_{1}\right) $ is the set of indices of all agents at least a
distance $\Lambda \left( k\right) $ but not more than $\Lambda \left(
h\right) $ appart from $q_{1}.$ The set $I_{1}$ contains all the indices
that were assigned to either $J_{k}$ or $T_{k,h}.$ Letting $J=\left\{
1,...,n\right\} $ it follows that $J\cap I_{1}^{c}=J\backslash I_{1}$
denotes all the indices not yet assigned. Now assume that the sets $%
J_{k}\left( q_{1}\right) ,...,J_{k}\left( q_{N-1}\right) ,$ $T_{k,h}\left(
q_{1}\right) ,...,T_{k}\left( q_{N-1}\right) $ and $I_{1},...,I_{N-1}$ were
created recursively, and in particular that $I_{N-1}$ denotes the set of
already assigned observations. If $I_{N-1}=J$ then terminate the recursion.
Othrewise define 
\begin{equation}
q_{N}=\arg \min_{q\in J\backslash I_{N-1},i\in I_{N-1}}g_{qi}\left( \zeta
\right) \text{ s.t. }g_{qi}\left( \zeta \right) <\Lambda \left( k\right)
\label{Algo_5}
\end{equation}%
if such a $q_{N}$ exists. If no $q_{N}$ exists that satisfies the constraint
then terminate the recursion and assign all indicies in $J\backslash I_{N-1}$
to $T_{k,h}\left( q_{N}\right) $ where $q_{N}$ is some arbitrary element of $%
J\backslash I_{N-1}$. If the recursion continues then $q_{N}$ denotes the
closest agent who is at least a distance $\Lambda \left( k\right) $ from all
already assigned agents apart. With such a $q_{N}$ then define the number of
points in the $J_{k}\left( q_{N}\right) $ and $T_{k,h}\left( q_{N}\right) $
sets respectively as $l_{N}$ and $r_{N}$ in analogy with $l_{1}$ and $r_{1}$
as 
\begin{eqnarray}
l_{N} &=&\sum_{j\in J\left( q_{N}\right) \backslash I_{N-1}}1\left\{
g_{q_{N}j}\left( \zeta \right) \geq \Lambda \left( k\right) \right\}
\label{Algo_6} \\
r_{N} &=&\sum_{j\in J\left( q_{N}\right) \backslash I_{N-1}}^{n}1\left\{
\Lambda \left( k\right) >g_{q_{N}j}\left( \zeta \right) \geq \Lambda \left(
h\right) \right\}  \label{Algo_7}
\end{eqnarray}%
and the sets%
\begin{eqnarray}
J_{k}\left( q_{N}\right) &=&\left\{ j_{1}\left( q_{N}\right)
,...,j_{l_{N}}\left( q_{N}\right) \right\}  \label{Algo_8} \\
T_{k,h}\left( q_{N}\right) &=&\left\{ j_{l_{N}+1}\left( q_{N}\right)
,...,j_{l_{N}+r_{N}}\left( q_{N}\right) \right\}  \label{Algo_9} \\
I_{N} &=&I_{N-1}\cup \left( J_{k}\left( q_{N}\right) \cup T_{k,h}\left(
q_{N}\right) \right) .  \label{Algo_10}
\end{eqnarray}%
The recursion continues as long as sets can be formed. When the recursion
terminates set $J_{k}\left( q_{N}\right) =\emptyset $, $T_{k,h}\left(
q_{N}\right) =J\backslash I_{N-1}$ with $q_{N}$ chosen arbitrarily from $%
J\backslash I_{N-1}$ and $I_{N}=J.$

The final step of the argument consists in constructing sequences $k_{n}^{i}$
and $h_{n}^{i}$ such that the sets $J_{k_{n}^{i}}\left( q_{i}\right) $ and $%
T_{k_{n}^{i},h_{n}^{i}}\left( q_{i}\right) $ have the required number of
elements. For this purpose select what amounts to two bandwidth type
parameters $L_{n}=c_{J}n^{3/4}$ and $R_{n}=c_{T}n^{1/4-\epsilon }$ for some $%
\epsilon >0.$ Let $l_{i}^{k}=\sum_{j=1}^{n}1\left\{ g_{ij}\left( \zeta
\right) \geq \Lambda \left( k\right) \right\} $ and $r_{i}^{h,k}=%
\sum_{j=1}^{n}1\left\{ \Lambda \left( k\right) >g_{ij}\left( \zeta \right)
\geq \Lambda \left( h\right) \right\} $ be defined as before. For each $i$
and $n$ choose $k_{n}\left( i\right) $ such that $l_{i}^{k_{n}\left(
i\right) }$ is the largest value that satisfies $\left\lfloor
L_{n}\right\rfloor -1\leq l_{i}^{k_{n}\left( i\right) }\leq \left\lfloor
L_{n}\right\rfloor $ where $\left\lfloor L_{n}\right\rfloor $ denotes the
largest integer that is smaller than $L_{n}.$ Now, using the $k_{n}\left(
i\right) $ just defined, find $h_{n}\left( i\right) $ such that $%
r_{i}^{h_{n}\left( i\right) ,k_{n}\left( i\right) }$ is the largest value
that satisfies $\left\lfloor R_{n}\right\rfloor -1\leq r_{i}^{h_{n}\left(
i\right) ,k_{n}\left( i\right) }\leq \left\lfloor R_{n}\right\rfloor .$ This
procedure produces a collection of cut-off points $\left\{ k_{n}\left(
1\right) ,...,k_{n}\left( n\right) \right\} $ and $\left\{ h_{n}\left(
1\right) ,...,h_{n}\left( n\right) \right\} .$ In other words, each
observation $i$ in the sample is assigned a pair $\left( k_{n}\left(
i\right) ,h_{n}\left( i\right) \right) $. Next, set $N=n/\left( \left\lfloor
L_{n}\right\rfloor +\left\lfloor R_{n}\right\rfloor \right) .$

The sets $J$ and $T$ are now chosen according to the algorithm laid out in (%
\ref{Algo_1})-(\ref{Algo_10}). In particular choose the center points $q_{i}$
in (\ref{Algo_5}) for $i=1,...,N$ and given sets $%
I_{i-1},J_{k_{n}^{1}},...,J_{k_{n}^{i-1}},$ and $%
T_{k_{n}^{1},h_{n}^{1}},...,T_{k_{n}^{i-1},h_{n}^{i-1}}$ accroding to 
\begin{equation}
q_{i}=\arg \min_{q\in J\backslash I_{i-1},i\in I_{i-1}}g_{qi}\left( \zeta
\right) \text{ s.t. }g_{qi}\left( \zeta \right) <\Lambda \left( k_{n}\left(
q\right) \right)  \label{Algo_11}
\end{equation}%
where in particular the previously determined cut-off points $k_{n}\left(
q\right) $ specific to a candidate point $q$ are used. For $q_{i}$
determined in this way, define 
\begin{equation}
k_{n}^{i}=k_{n}\left( q_{i}\right) ,\text{ }h_{n}^{i}=h_{n}\left(
q_{i}\right) .  \label{Algo_12}
\end{equation}%
Then, form $J_{k_{n}^{i}}\left( q_{i}\right) $ according to (\ref{Algo_8})
with cut-off index $l_{q_{i}}^{k_{n}^{i}}.$ This guarantees that the set $%
J_{k_{n}^{i}}\left( q_{i}\right) $ has the required number of elements $%
\left\lfloor L_{n}\right\rfloor .$ Similarly, form $T_{k_{n}^{i},h_{n}^{i}}%
\left( q_{i}\right) $ according to (\ref{Algo_9}) with the cut-offs $%
l_{q_{i}}^{k_{n}^{i}}$ and $r_{q_{i}}^{h_{n}^{i},k_{n}^{i}}$ which again
guarantees that the set $T_{k_{n}^{i},h_{n}^{i}}\left( q_{i}\right) $ has
the desired number of elements.

The main result of this section can now be formulated. Relative to
Proposition \ref{Theorem_Spatial_Mixing} high level conditions on the sizes
of the sets $J_{k_{n}^{i}}\left( q_{i}\right) $ and $T_{k_{n}^{i},h_{n}^{i}}%
\left( q_{i}\right) $ are replaced with a regularity condition on the
distribution of $\zeta $ ruling out ties in the algorithm in ((\ref{Algo_1}%
)-(\ref{Algo_12}) that orders the data according to $g_{ij}\left( \zeta
\right) .$ Without such ties, there always exist sequences $k_{n}^{i}$ and $%
h_{n}^{i}$ such that the sets $J_{k_{n}^{i}}\left( q_{i}\right) $ and $%
T_{k_{n}^{i},h_{n}^{i}}\left( q_{i}\right) $ have $c_{J}n^{3/4}$ and $%
c_{T}n^{1/4-\epsilon }$ elements respectively in large samples. A maintained
assumption of the result below is that mixing coefficients that measure
dependence across the various partitions of the data decay at the required
rate. This is Condition (vi) in Proposition \ref{Theorem_Spatial_Mixing}
which remains a key assumption.

\begin{theorem}
\label{Theorem_Main}Let $k_{n}^{i},$ $h_{n}^{i},$ $J_{k_{n}^{i}}\left(
q_{i}\right) $ and $T_{k_{n}^{i},h_{n}^{i}}\left( q_{i}\right) $ be given by
(\ref{Algo_1})-(\ref{Algo_12}). Let $X_{in}$ be as defined in (\ref{Def_Xi}%
). Assume that $\Pr \left( g_{ij}\left( \zeta \right) =g_{ik}\left( \zeta
\right) \right) =0$ for all $i$ and all $j\neq k.$ Assume that Conditions
(i),(ii),(iii) and (vi) of Proposition \ref{Theorem_Spatial_Mixing} hold. 
\newline
Then, 
\begin{equation*}
n^{-1/2}S_{n}\rightarrow _{d}N\left( 0,\eta ^{2}\right) \text{ }\mathcal{C}%
\text{-stably}.
\end{equation*}
\end{theorem}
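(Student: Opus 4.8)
**

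The plan is to reduce Theorem \ref{Theorem_Main} to Proposition \ref{Theorem_Spatial_Mixing} by verifying that the explicit construction in (\ref{Algo_1})--(\ref{Algo_12}) produces sets $J_{k_n^i}(q_i)$ and $T_{k_n^i,h_n^i}(q_i)$ that satisfy Conditions (iv) and (v) of that Proposition almost surely. Conditions (i), (ii), (iii) and (vi) are assumed directly, and Condition (vi) in particular already subsumes the decay rate on $\psi_{h_n'}$; Condition (iii) of Proposition \ref{Theorem_Spatial_Mixing} (uniform bound on $\psi_{i,k}$ and $c_i$) is implied by Condition (iii) of Theorem \ref{Theorem_Main} together with the envelope hypothesis $\sup_j |v_{j,n}(\zeta)| \le z(\zeta)$ with $E[z(\zeta)^{2+\delta}]<\infty$, which gives the $L_{2+\delta}$ moment bound needed. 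So the entire content of the proof is the combinatorial/measure-theoretic claim that, \emph{given no ties} (i.e. $\Pr(g_{ij}(\zeta)=g_{ik}(\zeta))=0$ for $j\neq k$), the recursive algorithm terminates after $N = n/(\lfloor L_n\rfloor+\lfloor R_n\rfloor)$ steps with $|J_{k_n^i}(q_i)| = \lfloor L_n\rfloor = c_J n^{3/4}$ (up to rounding) and $|T_{k_n^i,h_n^i}(q_i)| = \lfloor R_n\rfloor = c_T n^{1/4-\epsilon}$ for every $i$.

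The first step is to argue that, conditional on $\zeta$, the no-ties assumption makes the ordered neighbor indices $j_m(i)$ in (\ref{Algo_1}) well-defined: for each $i$ the values $\{g_{ij}(\zeta)\}_{j=1}^n$ are almost surely distinct, so the rank statistic $\sum_l 1\{g_{il}(\zeta)\ge g_{ij}(\zeta)\}$ is a bijection from $\{1,\dots,n\}$ onto itself and $j_m(i)$ picks out a unique index. Next I would verify that for each chosen center $q_i$ one can find a cutoff $k_n(i)$ with $l_{q_i}^{k_n(i)} \in [\lfloor L_n\rfloor-1, \lfloor L_n\rfloor]$: as $k$ ranges over the reals, $\Lambda(k)$ decreases continuously from $1$ to $0$, so $l_i^k$ is a nondecreasing integer-valued step function going from (at least) $1$ up to $n$, and since consecutive jumps have size exactly one almost surely (by no ties), every integer value in $[1,n]$ is attained — in particular the target window $[\lfloor L_n\rfloor-1,\lfloor L_n\rfloor]$ is hit. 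The same monotonicity argument applied to $r_i^{h,k_n(i)}$ as $h$ varies delivers $h_n(i)$ with $r_{q_i}^{h_n(i),k_n(i)}\in[\lfloor R_n\rfloor-1,\lfloor R_n\rfloor]$. Then, since at step $N$ the algorithm assigns $l_{q_N}+r_{q_N}$ new indices and the chosen cutoffs force $l_{q_N}+r_{q_N} = \lfloor L_n\rfloor+\lfloor R_n\rfloor$ (up to a bounded rounding slack), after $N = n/(\lfloor L_n\rfloor+\lfloor R_n\rfloor)$ steps all $n$ indices are exhausted; the boundedness of the rounding slack relative to $n^{3/4}$ and $n^{1/4-\epsilon}$ respectively gives $\sup_i |\,|J_{k_n^i}(q_i)|\,N/n - 1| \to 0$ and $\sup_i |\,|T_{k_n^i,h_n^i}(q_i)|/n^{1/4-\epsilon} - 1| \to 0$, which are precisely Conditions (iv) and (v).

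The delicate point — and the one I expect to be the main obstacle — is that the algorithm's choice of centers in (\ref{Algo_11}) is itself $\zeta$-dependent and sequential, so one must check that the recursion does not stall: that whenever $I_{N-1}\neq J$, either a valid $q_N$ satisfying $g_{q_N i}(\zeta)<\Lambda(k_n(q_N))$ exists, or the few leftover indices can be harmlessly dumped into a final buffer set without destroying the size asymptotics. The argument here is that the number of "wasted" indices from early termination or from cutoff-rounding slack is $O(\lfloor L_n\rfloor + \lfloor R_n\rfloor) = o(n)$ per block and at most one partial block overall, hence negligible after dividing by $n$; making this uniform over $i$ (the $\sup_i$ in Conditions (iv) and (v)) requires that the per-center cutoff construction always lands in the target window, which is exactly where the no-ties hypothesis is indispensable, since a tie could force $l_i^k$ to jump past $\lfloor L_n\rfloor$ and make the window unreachable. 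Once these size conditions are in hand, one invokes Proposition \ref{Theorem_Spatial_Mixing} verbatim to conclude $n^{-1/2}S_n \to_d N(0,\eta^2)$ $\mathcal{C}$-stably.
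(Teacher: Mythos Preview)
Your proposal is correct and follows essentially the same approach as the paper: reduce to Proposition~\ref{Theorem_Spatial_Mixing} by showing that the algorithm in (\ref{Algo_1})--(\ref{Algo_12}), together with the no-ties assumption, guarantees Conditions (iv) and (v) by construction, with the remaining conditions directly assumed. The paper's own proof is in fact considerably terser---it simply asserts that the sequences $k_n^i$ and $h_n^i$ ``can always be chosen'' so that (iv) and (v) hold and then invokes the Proposition---whereas you supply the missing combinatorial detail (unit jumps of $l_i^k$ under no ties, monotonicity in $k$ and $h$, negligibility of rounding and early-termination slack), which is a genuine improvement in rigor over the paper's treatment.
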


An estimator for the standard deviation $\eta $ can now be formed under the
additional assumption that $E\left[ v_{j,n}\left( \zeta \right) |j\in
J_{k_{n}^{i}}\left( q_{i}\right) \right] =\mu _{n}^{i}$ does not depend on $%
j.$ Then, define 
\begin{equation*}
\tilde{X}_{i,n}=\sum_{j\in J_{k_{n}^{i}}\left( q_{i}\right) }\left(
v_{j,n}\left( \zeta \right) -\hat{\mu}_{n}^{i}\right) \text{ for }i=1,...,N
\end{equation*}%
where $\hat{\mu}_{n}^{i}=\left\vert J_{k_{n}^{i}}\left( q_{i}\right)
\right\vert ^{-1}\sum_{j\in J_{k_{n}^{i}}\left( q_{i}\right) }v_{j,n}\left(
\zeta \right) $ is the local sample averge of $v_{j,n}\left( \zeta \right) $
over the set $J_{k_{n}^{i}}\left( q_{i}\right) .$ The estimator $\hat{\eta}%
^{2}$ of $\eta ^{2}$ can now be formed, in accordance with Condition (ii)\
in Theorem \ref{Theorem_CLT_Blocks}, as 
\begin{equation*}
\hat{\eta}^{2}=n^{-1}\sum_{i=1}^{N}\tilde{X}_{i,n}^{2}.
\end{equation*}%
As long as $\sup_{i}\left\vert \mu _{n}^{i}-\hat{\mu}_{n}^{i}\right\vert
=o_{p}\left( 1\right) $ this estimator is consistent for $\eta ^{2}.$ Using
well known results in Andrews (2005) it follows that the standardized
statistic $n^{-1/2}\hat{\eta}^{-1}S_{n}\rightarrow _{d}N\left( 0,1\right) $
even if $\eta $ is random in the limit, and consequently $n^{-1/2}S_{n}$ has
a mixed Gaussian rather than standard Gaussian limiting distribution. The
standard Gaussian limiting distribution of the standardized statistic is the
reason why conventional confidence intervals and Wald tests pivotal even in
the stable limit scenario, i.e. when $\eta $ is random.

\section{Network Model\label{Section_Network_Model}}

This section illustrates how the general theory developed in this paper can
be applied to specific models. The example builds on network formation
models analyzed by Goldsmith-Pinkham and Imbens (2013), Chandasekhar (2015),
de Paula (2016), Graham (2016), Leung (2016), Ridder and Sheng (2016) and
Sheng (2016).

Here consider the directed network model of Example \ref{Example_Mixing} in
more detail. Let $U_{i}\left( j\right) $ be the utility of individual $i$
forming a possible link with individual $j.$ The adjacency matrix $D$ is
formed by a strategic network formation model whereby $d_{ij}=1\left\{
U_{i}\left( j\right) >0\right\} $ is the $i,j$-th element of $D,$ with $%
d_{ii}=0$ and $d_{ij}\neq d_{ji}$ in general.\textbf{\ }Note that this
formulation differs from Goldsmith-Pinkham and Imbens (2013), for example,
who consider undirected networks with $d_{ij}=d_{ji}$. In the context of
friendship networks the distinction could be interpreted as the difference
between desired and actual friendships. In the directed case where $d_{ij}=1$
and $d_{ji}=0$ are possible, $d_{ij}=1$ indicates $i$ desires firendship
with $j$ while $d_{ji}=0$ indicates that this desire is not shared by $j.$
Actual friendship then can be modeled as $d_{ij}d_{ji},$ whereas the
undirected network model directly represents actual friendship.

The utility function $U_{i}\left( j\right) $ depends on observable
characteristics $\zeta $ and unobservable link specific factors $\epsilon .$
For simplicity, the observable link specific characteristics of agent $i,$ $%
\zeta _{ij}=\zeta _{ji},$ are assumed to take values in $\mathbb{R}$. The
variable $\zeta _{ij}$ could be constructed as follows. Assume that each
agent $i$ draws a vector of indepdent link specific observed characteristics 
$\zeta _{i}=\left( \zeta _{i}\left( 1\right) ,...,\zeta _{i}\left( j\right)
,...\zeta _{i}\left( n\right) \right) $ and $\zeta _{ij}\equiv \zeta
_{i}\left( j\right) -\zeta _{j}\left( i\right) $ is the link specific
difference in these characteristics. A possible motivation for this
formulation is the realization that individuals are highly complex and not
easily characterized by a single attribute. For each possible link, somewhat
different features are therefore relevant. Thus, while all components of $%
\zeta _{i}$ might share common features, represented for example by a common
mean, for each possible link somewhat different features matter, represented
as deviations form that common mean. The utility function is modelled as 
\begin{equation*}
U_{i}\left( j\right) =\left( \alpha _{0}+\alpha _{\zeta }\left\vert \zeta
_{ij}\right\vert +\epsilon _{ij}\right) f_{u}\left( \left\vert \zeta
_{ij}\right\vert \right)
\end{equation*}%
where $\epsilon _{ij}$ is iid, and independent of $\zeta $. The function $%
f_{u}\left( x\right) $ is defined as $f_{u}\left( x\right) =1\left\{
\left\vert x\right\vert \leq \kappa _{u}\right\} $ where $\kappa _{u}$ is a
fixed and finite cut-off and $1\left\{ .\right\} $ stands for the indicator
function. The case where $f_{u}\left( x\right) =1$ for all $x$ is covered by
setting $\kappa _{u}=\infty .$ Allowing for values of $\kappa _{u}<\infty $
represents the case where no links are possible if characteristics are too
different from zero, an arbitrary point of normalization. A prime example of
such characteristics are physical location. The parameters are restricted to 
$\alpha _{\zeta }\leq 0$ which my represent homophily, the property that
similarities between $i$ and $j$ are desirable, see for example
Chandrasekhar (2015) or Graham (2016).

The degree of dependence between elements in $D$ depends both on the
functional form of $U_{i}\left( j\right) $ as well as on the distribution of 
$\zeta $ and $\epsilon .$ Convergence conditions in Assumptions \ref%
{Assume_Probability_Sum} and \ref{Assume_Moments_Mixing} can only hold if
the network is sufficiently sparse. The functional form of $U_{i}\left(
j\right) $ is one component that generates sparsity, the other being the
distribution of the random network locations $\zeta _{ij}.$ Related
conditions can be found in Meester and Roy (1996), Mele (2015), Leung (2016,
2018), Menzel (2016). A key difference between these references and the
sparsity assumption introduced here is that the physical size of the network
as measured by the location variables $\zeta $ increases without bound,
keeping the utility parameters constant, while these authors use
localization parameters that limit link formation locally as the sample size
grows.

Now consider a specific parameterization of the network formation model
where $\kappa _{u}<\infty $ and the support of $\zeta _{i}$ is bounded. For
ease of exposition normalize $\kappa _{u}$ to be a finite integer. This
scenario is similar to $m$-dependent processes in time series analysis where
only a finite number of elements in the random sequence are dependent.
Consider the degree of node $i.$ To further simplify the example abstract
from dependence of the degree from sample size in the following way. For
each $i=\left\{ 1,...n\right\} $ define infinite sequences of random
variables $\left\{ \zeta _{ij}\right\} _{j=-\infty }^{\infty }$ and $\left\{
\epsilon _{ij}\right\} _{j=-\infty }^{\infty }.$\footnote{%
The probability space constructed in Section \ref{Section_ProbabilitySpace}
can accommodate these sequences.} For $i=\left\{ 1,...,n\right\} $ and $%
j=\left\{ -\kappa _{u},...,n+\kappa _{u}\right\} $ let $d_{ij}$ be given as
in (\ref{Definition_dij_Neighborhood}). It follows that $d_{ij}=0$ for $%
\left\vert i-j\right\vert >\kappa _{u}.$ Then define the degree of $i$ as $%
v_{i}=\sum_{j=-\kappa _{u}-1}^{n+\kappa _{u}+1}d_{ij}$ for all $i\in \left\{
1,...,n\right\} .$ The setup corresponds to a situation where the network
has formed independently of the observed sample. Network statistics $v_{i}$
are then recorded directly in the data rather than obtained from
calculations done based on observed $d_{ij}.$ Alternatively, one can also
consider the sample based network statistic $v_{i,n}=\sum_{j=1}^{n}d_{ij}.$
In this case, the sampling scheme involves observing $d_{ij}$ in the data
and computing $v_{i,n}$ based on these observed data.

\begin{proposition}
\label{Proposition_Example_Network}Let $\mu _{i,n}=E\left[ v_{i,n}\right] $
and $\mu _{i}=E\left[ v_{i}\right] .$ Let $\mathcal{B}_{i,n}^{k}$ be defined
as in (\ref{Definition_B_i,n_k}) and define $\mathcal{B}_{i}^{k}$ as 
\begin{equation*}
\mathcal{B}_{i}^{k}=\sigma \left( w_{1,i}^{k}\left( \zeta \right)
,...,w_{i-1,i}^{k}\left( \zeta \right) ,w_{i+1,i}^{k}\left( \zeta \right)
,...,w_{n,i}^{k}\left( \zeta \right) \right)
\end{equation*}%
where $w_{j,i}^{k}\left( \zeta \right) =v_{j}\left( \zeta \right) 1\left\{
g_{ij}\left( \zeta \right) \leq \Lambda \left( k\right) \right\} .$ Under
the conditions of Example \ref{Example_NeighborhoodNetwork}, except that $%
\kappa _{u}>1$ is allowed, the following holds:\newline
(i) $\left\Vert \mu _{i,n}-E\left[ v_{i,n}\left( \zeta \right) |\mathcal{B}%
_{i,n}^{k}\right] \right\Vert _{2,\zeta }=0$ for $k>\kappa _{u},$ $%
\left\Vert \mu _{i}-E\left[ v_{i}\left( \zeta \right) |\mathcal{B}_{i}^{k}%
\right] \right\Vert _{2,\zeta }=0$ for $k>\kappa _{u}.$\newline
(ii) $n^{-1}\sum_{i=1}^{n}\left( v_{i,n}-\mu _{i,n}\right) \rightarrow
_{a.s.}0$ and $n^{-1}\sum_{i=1}^{n}v_{i}\rightarrow _{a.s.}\mu .$\newline
(iii) $n^{-1/2}\sum_{i=1}^{n}\left( v_{i,n}-\mu _{i,n}\right) \rightarrow
_{d}N\left( 0,\sigma ^{2}\right) $ and $n^{-1/2}\sum_{i=1}^{n}\left(
v_{i}-\mu \right) \rightarrow _{d}N\left( 0,\sigma ^{2}\right) .$
\end{proposition}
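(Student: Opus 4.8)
The plan is to exploit that the model (\ref{Definition_dij_Neighborhood}) is $\kappa_u$-dependent in the index $i$. Since $d_{ij}=0$ whenever $|i-j|>\kappa_u$, each of $v_i$ and $v_{i,n}=\sum_{j=1}^{n}d_{ij}$ is a measurable function of the block of primitives $\left(\epsilon_{il},\zeta_{il}\right)_{|i-l|\leq\kappa_u}$; and because the $\epsilon$'s are mutually independent while the $\zeta_{ij}=\zeta_{ji}$ are independent across unordered pairs, two such statistics $v_i,v_{i'}$ can be dependent only through the single common draw $\zeta_{ii'}$, which occurs precisely when $|i-i'|\leq\kappa_u$. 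For part (i) fix $k>\kappa_u$. Since $g_{ij}(\zeta)=H(-|\zeta_{ij}|)$, $\Lambda(k)=H(-k)$ and $H$ is strictly increasing, the truncation event $\left\{g_{ij}(\zeta)\leq\Lambda(k)\right\}$ equals $\left\{|\zeta_{ij}|\geq k\right\}$; for $|i-j|\leq\kappa_u$ the support restriction $|\zeta_{ij}|<|i-j|\leq\kappa_u<k$ makes this event null, so $w_{j,i,n}^{k}(\zeta)=0$ a.s.\ (and likewise $w_{j,i}^{k}(\zeta)=0$). Hence $\mathcal{B}_{i,n}^{k}$ is generated only by the $w_{j,i,n}^{k}$ with $|i-j|\geq\kappa_u+1$, each a function of $v_{j,n}$ and $\zeta_{ij}$ alone, none of which involves a primitive entering $v_{i,n}$; thus $\mathcal{B}_{i,n}^{k}$ is independent of $v_{i,n}$, so $E\!\left[v_{i,n}(\zeta)\mid\mathcal{B}_{i,n}^{k}\right]=\mu_{i,n}$ a.s., which gives both identities in (i) and in particular $\psi_{i,k}(\zeta)=0$ for $k>\kappa_u$; the argument for $v_i$ and $\mathcal{B}_i^k$ is identical.

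For parts (ii) and (iii) I would first reduce to the $n$-free array $v_i$: the statistics $v_{i,n}$ and $v_i$ differ only for the at most $\kappa_u$ indices at each end of $\left\{1,\dots,n\right\}$, where $0\leq v_i-v_{i,n}\leq\kappa_u$, so $\big|\sum_{i=1}^{n}(v_{i,n}-\mu_{i,n})-\sum_{i=1}^{n}(v_i-\mu_i)\big|$ is bounded by a constant depending only on $\kappa_u$, hence negligible after dividing by $n$ or by $n^{1/2}$. For (ii) I apply Theorem \ref{Theorem_Stout_3.7.1} to $v_i$: Assumption \ref{Assume_Moments_Mixing} holds with $c_i\equiv1$ since $0\leq v_i\leq2\kappa_u$ is bounded; Assumption \ref{Assume_Probability_Sum} holds exactly as in Example \ref{Example_NeighborhoodNetwork} and the display (\ref{Remark_Probability_Summability}), using $\psi_{i,k_m}(\zeta)=0$ for $k_m>\kappa_u$ from (i), the uniform bound on $E[\psi_{i,k_m}(\zeta)]$ for $k_m\leq\kappa_u$ supplied by (\ref{EX_NN_MomentBound}), and the dispersion bound $\sum_j\Pr(|\zeta_{ij}|\leq\kappa_u)\leq2\kappa_u+1$; and Assumption \ref{Assume_Triangular_Array} holds with $u_{i,n}\equiv0$ because $v_i$ does not depend on $n$. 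Theorem \ref{Theorem_Stout_3.7.1} then yields $n^{-1}\sum_{i=1}^{n}(v_i-\mu_i)\to 0$ a.s., and since $\mu_i\equiv\mu:=2\int_0^{\kappa_u}H(-t)\,dt$ is constant in $i$, $n^{-1}\sum_{i=1}^{n}v_i\to\mu$ a.s.; the $v_{i,n}$ statement follows from the reduction.

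For (iii) I would invoke Theorem \ref{Theorem_Main} applied to $v_i$. The no-tie hypothesis $\Pr\!\left(g_{ij}(\zeta)=g_{ik}(\zeta)\right)=0$ for $j\neq k$ (and $\Pr(g_{ij}(\zeta)=1)=0$) holds because the $|\zeta_{ij}|$ are continuous random variables independent across unordered pairs; with no ties the ordering algorithm (\ref{Algo_1})--(\ref{Algo_12}) is well defined and, since in this model it orders $j$ essentially by $|q_i-j|$, produces contiguous blocks $J_{k_n^i}(q_i)$ of size $\sim c_Jn^{3/4}$ and contiguous buffer bands $T_{k_n^i,h_n^i}(q_i)$ of size $\sim c_Tn^{1/4-\epsilon}$ tiling $\left\{1,\dots,n\right\}$ into $N\sim n^{1/4}$ pieces. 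Conditions (i), (iii) and (vi) of Proposition \ref{Theorem_Spatial_Mixing} then hold: (i) with $z(\zeta)\equiv2\kappa_u$; (iii) with $\sup_i\psi_{i,k}(\zeta)\leq\psi_k(\zeta):=4\kappa_u\,\mathbf{1}\{k\leq\kappa_u\}$ and $c_i\equiv1$, using (i); (vi) because $k_n^i,h_n^i\to\infty$, so one may choose $h_n'$ with $h_n'\to\infty$ and hence $h_n'>\kappa_u$ eventually, giving $E\!\left[\psi_{h_n'}(\zeta)\right]=0=O(n^{-1+\delta})$. Condition (ii) of Proposition \ref{Theorem_Spatial_Mixing} is the substantive step, treated next; granting it, Theorem \ref{Theorem_Main} delivers $n^{-1/2}\sum_{i=1}^{n}(v_i-\mu_i)\to_d N(0,\eta^2)$ $\mathcal{C}$-stably, transferred to $v_{i,n}$ by the reduction.

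The main obstacle is Condition (ii) of Proposition \ref{Theorem_Spatial_Mixing}, namely $n^{-1}\sum_{i=1}^{N}X_{i,n}^{2}\to_p\eta^{2}$. I would take $\eta^{2}=\sigma^{2}:=\lim_n n^{-1}\limfunc{Var}\!\big(\sum_{i=1}^{n}(v_i-\mu_i)\big)$, a nonrandom limit that exists because $\limfunc{Cov}(v_i,v_j)=0$ for $|i-j|>\kappa_u$ and these covariances are translation invariant, so that $\eta$ is measurable with respect to the (trivial) common field $\mathcal{C}$ and the $\mathcal{C}$-stable limit is the ordinary Gaussian $N(0,\sigma^2)$. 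For the mean, $E[X_{i,n}^{2}]=\sum_{j,j'\in J_{k_n^i}(q_i)}\limfunc{Cov}(v_j,v_{j'})$ equals $\left|J_{k_n^i}(q_i)\right|\sigma^{2}$ up to an $O(\kappa_u)$ edge correction per block, and summing over the $N$ blocks, which jointly omit only the $O(n^{1/2-\epsilon})$ indices lying in buffer bands, gives $E\big[n^{-1}\sum_i X_{i,n}^{2}\big]\to\sigma^{2}$. For the variance, once $n$ is large enough that the buffer bands exceed $2\kappa_u$ the $\kappa_u$-neighbourhoods of distinct $J$-blocks are disjoint, so $X_{i,n}$ and $X_{i',n}$ are independent for $i\neq i'$; hence $\limfunc{Var}\!\big(n^{-1}\sum_i X_{i,n}^{2}\big)=n^{-2}\sum_i\limfunc{Var}(X_{i,n}^{2})\leq n^{-2}N\max_i E[X_{i,n}^{4}]$, and since each $X_{i,n}$ is a sum of $\sim c_Jn^{3/4}$ bounded $\kappa_u$-dependent mean-zero terms one has $E[X_{i,n}^{4}]=O(n^{3/2})$, so the bound is $O\!\left(n^{-2}\cdot n^{1/4}\cdot n^{3/2}\right)=O(n^{-1/4})\to0$. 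This is exactly the point where the interplay between the (here, exact) $\kappa_u$-dependence and the sparsity of the $\zeta$-distribution — which makes the blocks contiguous and the buffer bands genuinely separating — does the work.
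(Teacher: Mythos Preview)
Your proposal is correct and follows essentially the same strategy as the paper: exploit the built-in $\kappa_u$-dependence to get $\psi_{i,k}=0$ for $k>\kappa_u$, verify Assumptions~\ref{Assume_Probability_Sum}--\ref{Assume_Triangular_Array} for the SLLN, and for the CLT use contiguous index blocks separated by growing buffer zones so that the $X_{i,n}$ are eventually independent, reducing Condition~(ii) to a routine second-moment computation. The only notable difference is that for part~(iii) the paper switches to the deterministic metric $g_{ij}=1/|i-j|$, $\Lambda(k)=1/k$ and constructs the blocks $J_k(q_i)=\{(i-1)(\lfloor n^{3/4}\rfloor+\lfloor n^{1/4-\epsilon}\rfloor)+1,\dots\}$ by hand, whereas you keep the stochastic $g_{ij}=H(-|\zeta_{ij}|)$, check the no-tie condition, and invoke Theorem~\ref{Theorem_Main}; both routes give the same contiguous blocking because the support restriction $|\zeta_{ij}|\in[|i-j|-1,|i-j|)$ forces the $g$-ordering to coincide with the index ordering.
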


The proof of the proposition is contained in Section \ref{Proofs_Network}.
Two elements of the example significantly simplify the argument. One is that
heterogeneity of the characteristics distribution, in particular a location
shift parameterized through the mean, is directly tied to the observation
index $i.$ As a result, the algorithm for finding the blocks $J_{k}\left(
q\right) $ and $T_{k,h}\left( q\right) $ greatly simplifies. The second
element is the built in limited dependence of the network statitics which is
achieved by requiring independence in location characteristics and
unobservables as well as a functional form restriction that limits the size
of neighborhoods. With these functional form restrictions the mixingale
coefficients can be computed easily for $k$ large enough and the mixingale
condition holds trivially.

\section{Conclusion}

The paper develops a general asymptotic theory for network data as well as
data that is dependent in a way that does not easily allow to reduce
statistics of interest to an independent sampling framework. The setup is
completely non-parametetric, although results from this work clearly are
relevant for the analysis of parameter estimators in parametric network
models. The conditions needed for the laws of large numbers and the central
limit theorem are high level in the sense that they restrict conditional
moments of observables as well as the joint distribution of location
characteristics.

The work done is clearly limited in scope and much more needs to be
accomplished to turn this approach into a fully applicable method. The list
of topics for future work includes checking regularity conditions for
specific models. The hope is that approximation techniques discussed in
Example \ref{Example_Lambda} can be generalized to more complex models.
Similarly, a more detailed investigation of distributions $P_{\zeta }$ that
lead to tractable models is of interest. Another topic for future work is to
expand on the algorithm that was proposed in order to obtain valid standard
errors. Currently, it is assumed that the functions $g_{ij}\left( \zeta
\right) $ are known. In practice, two avenues seem reasonable. One is to
work with ad-hoc functions such as $\max \left( 1,1/\left\Vert \zeta
_{i}-\zeta _{j}\right\Vert \right) $ or to use more model based approaches.
In the latter, it is expected that $g_{ij}$ would need to be estimated
parametrically or non-parametrically. The feasibility of the proposed
algorithm under those circumstances then would need to be established and
it's properties investigated.

\newpage

\appendix

\renewcommand{\thetheorem}{\Alph{section}.\arabic{theorem}} %
\renewcommand{\theassume}{\Alph{section}.\arabic{assumec}} %
\renewcommand{\theequation}{\Alph{section}.\arabic{equation}}

\setcounter{assumec}{0} \setcounter{theorem}{0} \setcounter{lemmac}{0} %
\setcounter{equation}{0} 

\renewcommand{\theassumec}{\Alph{section}.\arabic{assumec}} %
\renewcommand{\thelemmac}{\Alph{section}.\arabic{lemmac}}\renewcommand{%
\thetheorem}{\Alph{section}.\arabic{theorem}} \renewcommand{\theequation}{%
\Alph{section}.\arabic{equation}}

\newpage

\section{Appendix\label{APPSUP}}

\subsection{Probability Space\label{Section_ProbabilitySpace}}

Let $\mathcal{B}\left( \mathbb{R}^{d}\right) $ the Borel algebra of subset
of $\mathbb{R}^{d}.$ Consider the sequence of probability spaces 
\begin{equation*}
\left( \mathbb{R}^{d},\mathcal{B}\left( \mathbb{R}^{d}\right) \right)
=\left( \Omega _{1},\mathcal{F}_{1}\right) ,\left( \mathbb{R}^{d}\times 
\mathbb{R}^{d},\mathcal{B}\left( \mathbb{R}^{d}\right) \otimes \mathcal{B}%
\left( \mathbb{R}^{d}\right) \right) =\left( \Omega _{1}\times \Omega _{2},%
\mathcal{F}_{1}\otimes \mathcal{F}_{2}\right) ...
\end{equation*}%
with probability measures $P_{1},$ $P_{2},...$ Because $\mathbb{R}^{d}$ is a
complete separable metric space it follows from Kolmogorov's exentsion
thoerem, Shiryaev (1996) Theorem II.3.3 and Remark (p.165), that there
exists a probability measure $P$ on $\left( \Omega _{1}\times \Omega _{2}...,%
\mathcal{F}_{1}\otimes \mathcal{F}_{2}...\right) =\left( \Omega ,\mathcal{F}%
\right) $ such that $P$ agrees with all $P_{i}.$ Let $\chi =\left( \chi
_{1},\chi _{2},...\right) $ where $\chi _{i}$ are random variables on $%
\left( \Omega _{i},\mathcal{F}_{i}\right) .$ By the extension theorem the
process $\chi $ exists on $\left( \Omega ,\mathcal{F},P\right) .$ Assume
that $v_{i,n}\left( \zeta \right) $ are measurable functions that depend
only on $\left( \chi _{1},\chi _{2},...,\chi _{n}\right) .$

Let $\mathcal{Z}$ be a sub-sigma field of $\mathcal{F}$. By Breiman (1992),
Theorem 4.34 and Theorem A.46 there exists a regular conditional
distribution of $\chi $ given $\mathcal{Z}$ where for fixed $\omega \in
\Omega ,$ $Q_{\omega }\left( B|\mathcal{Z}\right) $ is called a regular
conditional distribution for $\chi $ given $\mathcal{Z}$ if for $B\in 
\mathcal{F}$ fixed, $Q_{\omega }\left( B|\mathcal{Z}\right) $ is a version
of $P\left( \chi \in B|\mathcal{Z}\right) $ and $Q_{\omega }\left( B|%
\mathcal{Z}\right) $ is a probability on $\mathcal{F}$. Note that this
construction guarantees the existence of conditional expectations. Following
Eagleson (1975) for fixed $\omega \in \Omega $ define the measure space $%
\left( \Omega ,\mathcal{F},Q_{\omega }\right) $ with expectation relative to 
$Q_{\omega }$ denoted by $E_{\omega }.$ In what follows $\mathcal{Z}$ is the
sigma field generated by $\zeta .$

\subsection{Proofs for Section \protect\ref{Section_LLN}}

The proofs of results reported in the main section follow.

\begin{proof}[Proof of Lemma \protect\ref{WeakLLN}]
The proof is based on showing that the variance of the process $n^{-1}S_{n}$
where $S_{n}=\sum_{i=1}^{n}\left( v_{i,n}\left( \zeta \right) -\mu
_{i,n}\right) $ tends to zero. First consider the covariance between $%
v_{i,n}\left( \zeta \right) $ and $v_{j,n}\left( \zeta \right) $ where%
\begin{eqnarray*}
&&\limfunc{Cov}\left( v_{i,n}\left( \zeta \right) ,v_{j,n}\left( \zeta
\right) \right) \\
&=&E\left[ \left( v_{i,n}\left( \zeta \right) -\mu _{i,n}\right) \left(
v_{j,n}\left( \zeta \right) -\mu _{j,n}\right) \right] \\
&=&\sum_{m=0}^{\infty }E\left[ E\left[ \left( v_{i,n}\left( \zeta \right)
-\mu _{i,n}\right) \left( v_{j,n}\left( \zeta \right) -\mu _{j,n}\right)
|\zeta \right] |A_{k_{m}}\left( i,j\right) \right] P\left( A_{k_{m}}\left(
i,j\right) \right) \\
&=&\sum_{m=0}^{\infty }E\left[ E\left[ \left( v_{i,n}\left( \zeta \right) -E%
\left[ v_{i,n}\left( \zeta \right) |\mathcal{B}_{i,n}^{k_{m}}\right] \right)
\left( v_{j,n}\left( \zeta \right) -\mu _{j,n}\right) |\zeta \right]
|A_{k_{m}}\left( i,j\right) \right] P\left( A_{k_{m}}\left( i,j\right)
\right) \\
&&+\sum_{m=0}^{\infty }E\left[ E\left[ \left( E\left[ v_{i,n}\left( \zeta
\right) |\mathcal{B}_{i,n}^{k_{m}}\right] -\mu _{i,n}\right) \left(
v_{j,n}\left( \zeta \right) -\mu _{j,n}\right) |\zeta \right]
|A_{k_{m}}\left( i,j\right) \right] P\left( A_{k_{m}}\left( i,j\right)
\right) .
\end{eqnarray*}%
Now use the conditional Cauchy-Schwarz inequality and the fact that $E\left[
\left\vert v_{j,n}\left( \zeta \right) -\mu _{i,n}\right\vert ^{2}|\zeta %
\right] ^{1/2}\leq c_{i}$ by assumption, 
\begin{eqnarray}
&&E\left[ \left( E\left[ v_{i,n}\left( \zeta \right) |\mathcal{B}%
_{i,n}^{k_{m}}\right] -\mu _{i,n}\right) \left( v_{j,n}\left( \zeta \right)
-\mu _{j,n}\right) |\zeta \right]  \label{WeakLLN_D1} \\
&\leq &E\left[ \left\vert E\left[ v_{i,n}\left( \zeta \right) |\mathcal{B}%
_{i,n}^{k_{m}}\right] -\mu _{i,n}\right\vert ^{2}|\zeta \right] ^{1/2}E\left[
\left\vert v_{j,n}\left( \zeta \right) -\mu _{j,n}\right\vert ^{2}|\zeta %
\right] ^{1/2}  \notag \\
&\leq &c_{i}\psi _{i,k_{m}}\left( \zeta \right) E\left[ \left\vert
v_{j,n}\left( \zeta \right) -\mu _{j,n}\right\vert ^{2}|\zeta \right] ^{1/2}
\notag \\
&\leq &Kc_{i}c_{j}\psi _{i,k_{m}}\left( \zeta \right)  \notag
\end{eqnarray}%
and%
\begin{eqnarray}
&&E\left[ E\left[ \left( v_{i,n}\left( \zeta \right) -E\left[ v_{i,n}\left(
\zeta \right) |\mathcal{B}_{i,n}^{k_{m}}\right] \right) \left( v_{j,n}\left(
\zeta \right) -\mu _{i,n}\right) |\zeta \right] |A_{k_{m}}\left( i,j\right) %
\right]  \label{WeakLLN_D2} \\
&=&E\left[ \left( v_{i,n}\left( \zeta \right) -E\left[ v_{i,n}\left( \zeta
\right) |\mathcal{B}_{i,n}^{k_{m}}\right] \right) \left( v_{j,n}\left( \zeta
\right) -\mu _{i,n}\right) |A_{k_{m}}\left( i,j\right) \right] =0  \notag
\end{eqnarray}%
because conditional on $A_{k_{m}}\left( i,j\right) ,$ $\left( v_{j,n}\left(
\zeta \right) -\mu _{i,n}\right) $ is measurable with respect to $\mathcal{B}%
_{i,n}^{k_{m}}.$ It then follows that 
\begin{eqnarray*}
&&E\left[ \left( v_{i,n}\left( \zeta \right) -E\left[ v_{i,n}\left( \zeta
\right) |\mathcal{B}_{i,n}^{k_{m}}\right] \right) \left( v_{j,n}\left( \zeta
\right) -\mu _{i,n}\right) |A_{k_{m}}\left( i,j\right) \right] \\
&=&E\left[ v_{i,n}\left( \zeta \right) \left( v_{j,n}\left( \zeta \right)
-\mu _{i,n}\right) |A_{k_{m}}\left( i,j\right) \right] -E\left[ E\left[
v_{i,n}\left( \zeta \right) \left( v_{j,n}\left( \zeta \right) -\mu
_{i,n}\right) |\mathcal{B}_{i,n}^{k_{m}}\right] |A_{k_{m}}\left( i,j\right) %
\right] .
\end{eqnarray*}%
Since $\mathcal{B}_{i,n}^{k_{m}}\supseteq A_{k_{m}}\left( i,j\right) $ it
follows from Breiman (1992, Proposition 4.20) that 
\begin{eqnarray*}
&&E\left[ E\left[ v_{i,n}\left( \zeta \right) \left( v_{j,n}\left( \zeta
\right) -\mu _{i,n}\right) |\mathcal{B}_{i,n}^{k_{m}}\right]
|A_{k_{m}}\left( i,j\right) \right] \\
&=&E\left[ v_{i,n}\left( \zeta \right) \left( v_{j,n}\left( \zeta \right)
-\mu _{i,n}\right) |A_{k_{m}}\left( i,j\right) \right]
\end{eqnarray*}%
which establishes (\ref{WeakLLN_D2}). It then follows from (\ref{WeakLLN_D1}%
) and (\ref{WeakLLN_D2}) that 
\begin{equation}
\limfunc{Cov}\left( v_{i,n}\left( \zeta \right) ,v_{j,n}\left( \zeta \right)
\right) \leq 2Kc_{i}c_{j}\sum_{m=0}^{\infty }E\left[ \psi _{i,k_{m}}\left(
\zeta \right) |A_{k_{m}}\left( i,j\right) \right] P\left( A_{k_{m}}\left(
i,j\right) \right)  \label{Cov_Ineq}
\end{equation}%
Using the inequality in (\ref{Cov_Ineq}) leads to 
\begin{eqnarray}
\limfunc{Var}\left( n^{-1}S_{n}\right) &\leq &n^{-2}\sum_{i,j=1}^{n}E\left[
\left( v_{i,n}\left( \zeta \right) -\mu _{i,n}\right) \left( v_{j,n}\left(
\zeta \right) -\mu _{i,n}\right) \right]  \label{D_VarS_Bound} \\
&\leq &2n^{-2}K\sum_{i,j=1}^{n}c_{i}c_{j}\sum_{m=0}^{\infty }E\left[ \psi
_{i,k_{m}}\left( \zeta \right) |A_{k_{m}}\left( i,j\right) \right] P\left(
A_{k_{m}}\left( i,j\right) \right)  \notag \\
&\leq &n^{-1}\left( \sup c_{i}\right)
^{2}2K\sup_{i}\sum_{j=1}^{n}\sum_{m=0}^{\infty }E\left[ \psi
_{i,k_{m}}\left( \zeta \right) |A_{k_{m}}\left( i,j\right) \right] P\left(
A_{k_{m}}\left( i,j\right) \right)  \notag \\
&=&O\left( n^{-1}\right)  \notag
\end{eqnarray}%
such that the order in the last line of (\ref{D_VarS_Bound}) follows.
\end{proof}

\begin{proof}[Proof of Lemma \protect\ref{Lemma_Stout_2.4.1}]
The proof closely follows Stout (1974, p.24) with the necessary adjustments
to allow for triangular arrays. Fix $a\geq 0$ and proceed by induction. For $%
n=1$ the claim of Lemma \ref{Lemma_Stout_2.4.1} holds trivially because the
right hand side of (\ref{S2.4.2}) does not depend on $l.$ Then assume that (%
\ref{S2.4.3}) holds for all $n\leq N$ and all $a\geq 0$ where we take $N$ as
even. First consider the case where $n\leq N/2$ such that for all $l\geq 1$ 
\begin{equation*}
\left( \tsum\nolimits_{i=a+1}^{a+n}\tilde{v}_{i,l}\right) ^{2}\leq
M_{a,N/2}^{2}.
\end{equation*}%
If $N/2<n\leq N$ then for $l\geq 1$ 
\begin{equation*}
\left( \tsum\nolimits_{i=a+1}^{a+n}\tilde{v}_{i,l}\right) ^{2}\leq
M_{a,N/2}^{2}+2\left\vert \tsum\nolimits_{i=a+1}^{a+N/2}\tilde{v}%
_{i,l}\right\vert M_{a+N/2,N/2}+M_{a+N/2,N/2}^{2}
\end{equation*}%
and thus, as in Stout,%
\begin{equation*}
M_{a,N}^{2}\leq M_{a,N/2}^{2}+2\left\vert \tsum\nolimits_{i=a+1}^{a+N/2}%
\tilde{v}_{i,l}\right\vert M_{a+N/2,N/2}+M_{a+N/2,N/2}^{2}.
\end{equation*}%
Proceeding as in Stout by taking expectations on both sides and using the
induction hypothesis leads to 
\begin{equation*}
E\left[ M_{a,N}^{2}\right] \leq \left( \frac{\log N}{\log 2}+\left( \frac{%
\log N}{\log 2}\right) ^{2}\right) \left( g\left( F_{a,N/2}\right) +g\left(
F_{a+N/2,N/2}\right) \right) .
\end{equation*}%
Using (\ref{S2.4.1}) and the fact that $\left( \log N\right) \left( \log
2\right) +\left( \log N\right) ^{2}\leq \log \left( 2N\right) ^{2}$ leads to 
\begin{equation*}
E\left[ M_{a,N}^{2}\right] \leq \left( \log \left( 2N\right) /\log 2\right)
^{2}g\left( F_{a,N}\right) .
\end{equation*}%
The case where $N$ is odd follows in the same way as in the proof of Stout.
\end{proof}

\begin{proof}[Proof of Lemma \protect\ref{Lemma_Stout_2.4.2}]
The proof again closely follows Stout (1974, p.26). Fix $m=a+n$ for any $%
a\geq 0.$ Then, 
\begin{equation*}
E\left[ \left( \tsum\nolimits_{i=a+1}^{a+n}\tilde{v}_{i,a+n}\right) ^{2}%
\right] \leq g\left( F_{a,n}\right) \leq \frac{Kh\left( F_{a,n}\right) }{%
\log ^{2}\left( a+1\right) }\leq \frac{K^{2}}{\log ^{2}\left( a+1\right) }%
\rightarrow 0
\end{equation*}%
as $a\rightarrow \infty .$ This implies that $S_{a+n,a+n}$ is a Cauchy
sequence in $L_{2}.$ By the completness of $L_{2}$ there exists a random
variable $S$ with $E\left[ S^{2}\right] <\infty $ such that $E\left[ \left(
S_{n,n}-S\right) ^{2}\right] \rightarrow 0$ as $n\rightarrow \infty $ where $%
a=0$ was chosen without loss of generality.

First establish that there exist a subsequence that converges almost surely.
In particular consider $S_{2^{k},2^{k}}$ which converges almost surely if $%
\sum_{k=1}^{\infty }E\left[ \left( S-S_{2^{k},2^{k}}\right) ^{2}\right]
<\infty .$ But 
\begin{eqnarray*}
E\left[ \left( S-S_{2^{k},2^{k}}\right) ^{2}\right] &=&\lim_{n}E\left[
\left( S_{n,n}-S_{n,2^{k}}+S_{n,2^{k}}-S_{2^{k},2^{k}}\right) ^{2}\right] \\
&\leq &\lim_{n}E\left[ \left( S_{n,n}-S_{n,2^{k}}\right) ^{2}\right]
+\lim_{n}E\left[ \left( S_{n,2^{k}}-S_{2^{k},2^{k}}\right) ^{2}\right] \\
&&+2\lim_{n}\left( E\left[ \left( S_{n,n}-S_{n,2^{k}}\right) ^{2}\right] E%
\left[ \left( S_{n,2^{k}}-S_{2^{k},2^{k}}\right) ^{2}\right] \right) ^{1/2}.
\end{eqnarray*}%
Now, noting that $S_{n,n}-S_{n,2^{k}}=\tsum\nolimits_{i=2^{k}+1}^{2^{k}+n}%
\tilde{v}_{i,n},$ it follows that 
\begin{equation*}
\lim_{n}E\left[ \left( S_{n,n}-S_{n,2^{k}}\right) ^{2}\right] \leq \underset{%
n\rightarrow \infty }{\lim \sup }g\left( F_{2^{k},n-2^{k}}\right) \leq K%
\underset{n\rightarrow \infty }{\lim \sup }\frac{h\left(
F_{2^{k},n-2^{k}}\right) }{\log ^{2}\left( 2^{k}+1\right) }\leq \frac{K^{2}}{%
\log ^{2}\left( 2^{k}+1\right) }
\end{equation*}%
and by assumption 
\begin{eqnarray*}
\underset{n\rightarrow \infty }{\lim \sup }E\left[ \left(
S_{n,2^{k}}-S_{2^{k},2^{k}}\right) ^{2}\right] &\leq &\underset{n\rightarrow
\infty }{\lim \sup }E\left[ \left( \tsum\nolimits_{i=1}^{2^{k}}\left\vert 
\tilde{v}_{i,n}-\tilde{v}_{i,2^{k}}\right\vert \right) ^{2}\right] \\
&\leq &\lim_{n\rightarrow \infty }E\left[ \left(
\tsum\nolimits_{i=1}^{2^{k}}\sup_{m>n}\left\vert \tilde{v}_{i,m}-\tilde{v}%
_{i,2^{k}}\right\vert \right) ^{2}\right] \\
&\leq &\lim_{n\rightarrow \infty }E\left[ \left( \left( 2^{k}\log ^{2}\left(
2^{k}+1\right) \right) ^{-1}\tsum\nolimits_{i=1}^{2^{k}}u_{i}\right) ^{2}%
\right] \\
&\leq &\left( 2^{k}\log ^{2}\left( 2^{k}+1\right) \right) ^{-1}\left(
\tsum\nolimits_{i=1}^{2^{k}}\left( i\log ^{2}\left( i+1\right) \right)
^{-1/2}E\left[ u_{i}^{2}\right] ^{1/2}\right) ^{2} \\
&\leq &\frac{K^{2}}{2^{k}\log ^{2}\left( 2^{k}+1\right) }.
\end{eqnarray*}%
where for the second last inequality and by Stout (1974, p.201), for all $%
k\geq 1,$ 
\begin{eqnarray*}
E\left[ \left( \left( 2^{k}\log ^{2}\left( 2^{k}+1\right) \right)
^{-1}\tsum\nolimits_{i=1}^{2^{k}}u_{i}\right) ^{2}\right] &\leq &\left(
2^{k}\log ^{2}\left( 2^{k}+1\right) \right) ^{-2}E\left[ \left(
\tsum\nolimits_{i=1}^{2^{k}}u_{i}\right) ^{2}\right] \\
&\leq &\left( 2^{k}\log ^{2}\left( 2^{k}+1\right) \right) ^{-2}\left( E\left[
\tsum\nolimits_{i=1}^{2^{k}}E\left[ u_{i}^{2}\right] ^{1/2}\right] \right)
^{2} \\
&\leq &\left( 2^{k}\log ^{2}\left( 2^{k}+1\right) \right) ^{-1}\left( E\left[
\tsum\nolimits_{i=1}^{2^{k}}\left( \frac{E\left[ u_{i}^{2}\right] }{i\log
^{2}\left( i+1\right) }\right) ^{1/2}\right] \right) ^{2}.
\end{eqnarray*}%
It then follows that 
\begin{equation*}
E\left[ \left( S-S_{2^{k},2^{k}}\right) ^{2}\right] \leq 3\frac{K^{2}}{\log
^{2}\left( 2^{k}+1\right) }
\end{equation*}%
which implies that 
\begin{equation*}
\sum_{k=1}^{\infty }E\left[ \left( S-S_{2^{k},2^{k}}\right) ^{2}\right] \leq
3K^{2}\sum_{k=1}^{\infty }\log ^{-2}\left( 2^{k}+1\right) <\infty
\end{equation*}%
such that $S_{2^{k},2^{k}}$ converges to $S$ almost surely. Finally, show
that 
\begin{equation*}
\max_{2^{k-1}\leq n\leq 2^{k}}\left\vert
S_{n,n}-S_{2^{k-1},2^{k-1}}\right\vert \rightarrow 0\text{ a.s. as }%
k\rightarrow \infty .
\end{equation*}%
This follows as before if 
\begin{equation}
\sum_{k=1}^{\infty }E\left[ \max_{2^{k-1}\leq n\leq 2^{k}}\left(
S_{n,n}-S_{2^{k-1},2^{k-1}}\right) ^{2}\right] <\infty .  \label{Dev_sum}
\end{equation}%
Consider 
\begin{eqnarray*}
\max_{2^{k-1}\leq n\leq 2^{k}}\left( S_{n,n}-S_{2^{k-1},2^{k-1}}\right) ^{2}
&=&\max_{2^{k-1}\leq n\leq 2^{k}}\left(
S_{n,n}-S_{n,2^{k-1}}+S_{n,2^{k-1}}-S_{2^{k-1},2^{k-1}}\right) ^{2} \\
&\leq &\max_{2^{k-1}\leq n\leq 2^{k}}\left( S_{n,n}-S_{n,2^{k-1}}\right)
^{2}+\max_{2^{k-1}\leq n\leq 2^{k}}\left(
S_{n,2^{k-1}}-S_{2^{k-1},2^{k-1}}\right) ^{2} \\
&&+2\left( \max_{2^{k-1}\leq n\leq 2^{k}}\left( S_{n,n}-S_{n,2^{k-1}}\right)
\max_{2^{k-1}\leq n\leq 2^{k}}\left(
S_{n,2^{k-1}}-S_{2^{k-1},2^{k-1}}\right) \right)
\end{eqnarray*}%
where by Lemma \ref{Lemma_Stout_2.4.1} 
\begin{eqnarray}
E\left[ \max_{2^{k-1}\leq n\leq 2^{k}}\left( S_{n,n}-S_{n,2^{k-1}}\right)
^{2}\right] &\leq &\left( \frac{\log \left( 2^{k}\right) }{\log \left(
2\right) }\right) ^{2}g\left( F_{2^{k-1},2^{k-1}}\right)  \label{L2.4.2_D1}
\\
&\leq &K\left( \frac{\log \left( 2^{k}\right) }{\log \left( 2\right) }%
\right) ^{2}\frac{h\left( F_{2^{k-1},2^{k-1}}\right) }{\log ^{2}\left(
2^{k-1}+1\right) }.  \notag
\end{eqnarray}%
For 
\begin{eqnarray}
E\left[ \max_{2^{k-1}\leq n\leq 2^{k}}\left(
S_{n,2^{k-1}}-S_{2^{k-1},2^{k-1}}\right) ^{2}\right] &=&E\left[
\max_{2^{k-1}\leq n\leq 2^{k}}\left( \tsum\nolimits_{i=1}^{2^{k-1}}\left( 
\tilde{v}_{i,n}-\tilde{v}_{i,2^{k-1}}\right) \right) ^{2}\right]
\label{L2.4.2_D2} \\
&\leq &E\left[ \left( \tsum\nolimits_{i=1}^{2^{k-1}}\max_{2^{k-1}\leq n\leq
2^{k}}\left\vert \tilde{v}_{i,n}-\tilde{v}_{i,2^{k-1}}\right\vert \right)
^{2}\right]  \notag \\
&\leq &\frac{1}{2^{\left( k-1\right) }\log ^{2}\left( 2^{k-1}+1\right) }E%
\left[ \left( \tsum\nolimits_{i=1}^{2^{k-1}}\frac{u_{i,n}}{i\log ^{2}\left(
i+1\right) }\right) ^{2}\right]  \notag \\
&\leq &\frac{1}{2^{\left( k-1\right) }\log ^{2}\left( 2^{k-1}+1\right) }%
\left( \tsum\nolimits_{i=1}^{2^{k-1}}\left( \frac{E\left[ u_{i,n}^{2}\right] 
}{i\log \left( i+1\right) }\right) ^{1/2}\right) ^{2}  \notag \\
&=&O\left( \frac{1}{2^{\left( k-1\right) }\log ^{2}\left( 2^{k-1}+1\right) }%
\right)  \notag
\end{eqnarray}%
where, as before, the last inequality uses Stout (1974, p. 201). Finally,
using the Cauchy-Schwarz inequality gives 
\begin{eqnarray}
&&E\left[ \left( \max_{2^{k-1}\leq n\leq 2^{k}}\left(
S_{n,n}-S_{n,2^{k-1}}\right) \max_{2^{k-1}\leq n\leq 2^{k}}\left(
S_{n,2^{k-1}}-S_{2^{k-1},2^{k-1}}\right) \right) \right]  \label{L2.4.2_D3}
\\
&\leq &E\left[ \max_{2^{k-1}\leq n\leq 2^{k}}\left(
S_{n,n}-S_{n,2^{k-1}}\right) ^{2}\right] ^{1/2}E\left[ \max_{2^{k-1}\leq
n\leq 2^{k}}\left( S_{n,2^{k-1}}-S_{2^{k-1},2^{k-1}}\right) ^{2}\right]
^{1/2}  \notag \\
&=&O\left( \left( \frac{\log \left( 2^{k}\right) }{\log \left( 2\right) }%
\right) \frac{\left( h\left( F_{2^{k-1},2^{k-1}}\right) \right) ^{1/2}}{\log
\left( 2^{k-1}+1\right) }\frac{1}{2^{\left( k-1\right) /2}\log \left(
2^{k-1}+1\right) }\right)  \notag \\
&=&O\left( \frac{1}{\log ^{2}\left( 2^{k-1}+1\right) }\right) .  \notag
\end{eqnarray}%
It then follows from (\ref{L2.4.2_D1}), (\ref{L2.4.2_D2}) and (\ref%
{L2.4.2_D3}) that 
\begin{eqnarray*}
E\left[ \max_{2^{k-1}\leq n\leq 2^{k}}\left(
S_{n,n}-S_{2^{k-1},2^{k-1}}\right) ^{2}\right] &\leq &K\left( \frac{\log
\left( 2^{k}\right) }{\log \left( 2\right) }\right) ^{2}\frac{h\left(
F_{2^{k-1},2^{k-1}}\right) }{\log ^{2}\left( 2^{k-1}+1\right) } \\
&&+\frac{K}{2^{\left( k-1\right) }\log ^{2}\left( 2^{k-1}+1\right) }+\frac{K%
}{\log ^{2}\left( 2^{k-1}+1\right) }.
\end{eqnarray*}%
Since $\log \left( 2^{k-1}+1\right) ^{-2}$ is summable over $k$ the last two
terms are summable. Similarly, 
\begin{equation*}
\sum_{k=1}^{\infty }\left( \frac{\log \left( 2^{k}\right) }{\log \left(
2\right) }\right) ^{2}\frac{h\left( F_{2^{k-1},2^{k-1}}\right) }{\log
^{2}\left( 2^{k-1}+1\right) }<\infty
\end{equation*}%
by Stout (1974, p.27). This establishes (\ref{Dev_sum}) and completes the
proof.
\end{proof}

\begin{proof}[Proof of Theorem \protect\ref{Theorem_Maximal}]
Using Lemma \ref{Lemma_Stout_2.4.1} it remains to be shown that there exists
a function $g\left( .\right) $ such that $g\left( F_{a,k}\right) +g\left(
F_{a+k,m}\right) \leq g\left( F_{a,k+m}\right) $ for all $1\leq k<k+m$ and $%
a\geq 0$ and $E\left[ \left( \tsum\nolimits_{i=a+1}^{a+n}\tilde{v}%
_{i,l}\right) ^{2}\right] \leq g\left( F_{a,n}\right) $ for all $l\geq 1,$ $%
n\geq 1$and $a\geq 0.$ Using the bound in Lemma \ref{WeakLLN} it follows by
the same arguments as in the proof of Lemma \ref{WeakLLN} that 
\begin{equation}
E\left[ \left( \tsum\nolimits_{i=a+1}^{a+n}\tilde{v}_{i,l}\right) ^{2}\right]
\leq K\sum_{i,j=a+1}^{a+n}c_{i}c_{j}\sum_{m=0}^{\infty }E\left[ \psi
_{i,k_{m}}\left( \zeta \right) |A_{k_{m}}\left( i,j\right) \right] P\left(
A_{k_{m}}\left( i,j\right) \right) .  \label{Maximal_D1}
\end{equation}%
It is worth pointing out that the critical element in the bound in (\ref%
{Maximal_D1}) is the fact that the right hand side does not depned on $l.$
Since $c_{i}\geq 0,$ $E\left[ \psi _{i,k_{m}}\left( \zeta \right)
|A_{k_{m}}\left( i,j\right) \right] \geq 0$ and $P\left( A_{k_{m}}\left(
i,j\right) \right) \geq 0$ it follows that 
\begin{equation*}
g\left( F_{a,n}\right) =K\sum_{i,j=a+1}^{a+n}c_{i}c_{j}\sum_{m=0}^{\infty }E 
\left[ \psi _{i,k_{m}}\left( \zeta \right) |A_{k_{m}}\left( i,j\right) %
\right] P\left( A_{k_{m}}\left( i,j\right) \right)
\end{equation*}%
satisfies the required properties. The result then follows directly from
Lemma \ref{Lemma_Stout_2.4.1}.
\end{proof}

\begin{proof}[Proof of Theorem \protect\ref{Theorem_AlmostSure}]
Using Lemmas \ref{Lemma_Stout_2.4.1} and \ref{Lemma_Stout_2.4.2} it remains
to be shown that there exists a function $g\left( .\right) $ such that $%
g\left( F_{a,k}\right) +g\left( F_{a+k,l}\right) \leq g\left(
F_{a,k+l}\right) $ for all $1\leq k<k+l$ and $a\geq 0$ and $E\left[ \left(
\tsum\nolimits_{i=a+1}^{a+n}\tilde{v}_{i,l}\right) ^{2}\right] \leq g\left(
F_{a,n}\right) $ for all $l\geq 1,$ $n\geq 1$and $a\geq 0$ and a function $%
h\left( .\right) $ such that $h\left( F_{a,k}\right) +h\left(
F_{a+k,l}\right) \leq h\left( F_{a,k+l}\right) $ for all $1\leq k<k+l$ and $%
a\geq 0,$ $h\left( F_{a,n}\right) \leq K<\infty $ for all $n\geq 1$ and $%
a\geq 0,$ and $g\left( F_{a,n}\right) \leq Kh\left( F_{a,n}\right) /\log
^{2}\left( a+1\right) $ for all $n\geq 1$ and $a>0.$

Since $P\left( A_{k,n}\left( i,j\right) \right) =P\left( A_{k,n}\left(
j,i\right) \right) $ one can write 
\begin{eqnarray*}
&&\sum_{i,j=a+1}^{a+n}c_{i}c_{j}\sum_{m=0}^{\infty }E\left[ \psi
_{i,k_{m}}\left( \zeta \right) |A_{k_{m}}\left( i,j\right) \right] P\left(
A_{k_{m}}\left( i,j\right) \right) \\
&\leq &2K\sum_{i=a+1}^{a+n}c_{i}\sum_{j=i}^{a+n}c_{j}\sum_{m=0}^{\infty }E 
\left[ \psi _{i,k_{m}}\left( \zeta \right) |A_{k_{m}}\left( i,j\right) %
\right] P\left( A_{k_{m}}\left( i,j\right) \right) .
\end{eqnarray*}%
Let 
\begin{equation*}
\tilde{g}\left( i,a,n\right) =\sum_{j=i}^{a+n}c_{j}\sum_{m=0}^{\infty }E%
\left[ \psi _{i,k_{m}}\left( \zeta \right) |A_{k_{m}}\left( i,j\right) %
\right] P\left( A_{k_{m}}\left( i,j\right) \right) .
\end{equation*}%
It follows from $c_{j}\geq 0,$ $E\left[ \psi _{i,k_{m}}\left( \zeta \right)
|A_{k_{m}}\left( i,j\right) \right] \geq 0$ and $P\left( A_{k_{m}}\left(
i,j\right) \right) \geq 0$ that $\tilde{g}\left( i,a,n\right) $ is
increasing in $a$ and $n$ and $g\left( i,a+k,l\right) =g\left(
i,a,l+k\right) .$ Now choose $g\left( F_{a,n}\right)
=2\sum_{i=a+1}^{a+n}c_{i}\tilde{g}\left( i,a,n\right) $ such that 
\begin{eqnarray}
g\left( F_{a,k}\right) +g\left( F_{a+k,l}\right) &=&2K\left(
\sum_{i=a+1}^{a+k}c_{i}\tilde{g}\left( i,a,k\right)
+\sum_{i=a+k+1}^{a+l}c_{i}\tilde{g}\left( i,a,l\right) \right)
\label{Th_AS_D1} \\
&\leq &2K\left( \sum_{i=a+1}^{a+k}c_{i}\tilde{g}\left( i,a,k+l\right)
+\sum_{i=a+k+1}^{a+l}c_{i}\tilde{g}\left( i,a,k+l\right) \right)  \notag \\
&\leq &2K\sum_{i=a+1}^{a+k+l}c_{i}\tilde{g}\left( i,a,k+l\right) =g\left(
F_{a,k+l}\right) .  \notag
\end{eqnarray}%
By the proof of Theorem \ref{Theorem_Maximal} it follows that $E\left[
\left( \tsum\nolimits_{i=a+1}^{a+n}\tilde{v}_{i,h}\right) ^{2}\right] \leq
g\left( F_{a,n}\right) $ for all $h\geq 1,$ $n\geq 1$and $a\geq 0.$ Now
choose $h\left( .\right) $ as 
\begin{equation*}
h\left( F_{a,n}\right) =2K\sum_{i=a+1}^{a+n}c_{i}\log ^{2}\left( i\right) 
\tilde{g}\left( i,a,n\right) .
\end{equation*}%
Under the conditions of the Theorem in (\ref{Th_AS_Cond1}) it follows that $%
h\left( F_{a,n}\right) \leq K<\infty $ for some $K.$ By the same logic as in
(\ref{Th_AS_D1}) it follows that $h\left( F_{a,k}\right) +h\left(
F_{a+k,l}\right) \leq h\left( F_{a,k+l}\right) $ for all $1\leq k<k+l$ and $%
a\geq 0.$ Finally, since $\log ^{2}\left( i\right) /\log ^{2}\left(
a+1\right) \geq 1$ for $i\geq a+1$ it follows that $g\left( F_{a,n}\right)
\leq Kh\left( F_{a,n}\right) /\log ^{2}\left( a+1\right) .$ Then, the result
follows from Lemma \ref{Lemma_Stout_2.4.2}.
\end{proof}

\begin{proof}[Proof of Theorem \protect\ref{Theorem_Stout_3.7.1}]
The proof is based on establishing the conditions of Theorem 3.7.1. in Stout
(1974). Note that Lemma \ref{WeakLLN} and in particular (\ref{Cov_Bound})
imply 
\begin{equation*}
\limfunc{Cov}\left( \frac{v_{i,n}\left( \zeta \right) }{i},\frac{%
v_{j,n}\left( \zeta \right) }{j}\right) \leq 2K\frac{c_{i}c_{j}}{ij}%
\sum_{m=0}^{\infty }E\left[ \psi _{i,k_{m}}\left( \zeta \right)
|A_{k_{m}}\left( i,j\right) \right] P\left( A_{k_{m}}\left( i,j\right)
\right) .
\end{equation*}%
Then, by the same arguments as in the proof of Lemma \ref{WeakLLN} it
follows that for any $a\geq 0,$ 
\begin{eqnarray*}
&&E\left[ \left( \tsum\nolimits_{i=a+1}^{a+n}\frac{\left( v_{i,n}-\mu
_{i,n}\right) }{i}\right) ^{2}\right] \\
&\leq &2K\sum_{i,j=a+1}^{a+n}\frac{c_{i}c_{j}}{ij}\sum_{m=0}^{\infty }E\left[
\psi _{i,k_{m}}\left( \zeta \right) |A_{k_{m}}\left( i,j\right) \right]
P\left( A_{k_{m}}\left( i,j\right) \right) \\
&\leq &4K^{3}\sum_{i=a+1}^{a+n}\frac{1}{i^{2}}\sum_{m=0}^{\infty
}\sum_{j=i}^{a+n}E\left[ \psi _{i,k_{m}}\left( \zeta \right)
|A_{k_{m}}\left( i,j\right) \right] P\left( A_{k_{m}}\left( i,j\right)
\right) .
\end{eqnarray*}%
Choose the function $g\left( F_{a,n}\right)
=\sum_{i=a+1}^{a+n}i^{-2}\sum_{m=0}^{\infty }\sum_{j=i}^{a+n}E\left[ \psi
_{i,k_{m}}\left( \zeta \right) |A_{k_{m}}\left( i,j\right) \right] P\left(
A_{k_{m}}\left( i,j\right) \right) .$ Because $P\left( A_{k_{m}}\left(
i,j\right) \right) \geq 0$ and $E\left[ \psi _{i,k_{m}}\left( \zeta \right)
|A_{k_{m}}\left( i,j\right) \right] \geq 0$ it follows that $g\left(
.\right) $ satisfies (\ref{S2.4.1}). As in Corollary 2.4.1 of Stout (1974),
choose $h\left( F_{a,n}\right) $ as%
\begin{equation*}
h\left( F_{a,n}\right) =\sum_{i=a+1}^{a+n}\frac{\log ^{2}\left( i\right) }{%
i^{2}}\sum_{j=i}^{a+n}\sum_{m=0}^{\infty }E\left[ \psi _{i,k_{m}}\left(
\zeta \right) |A_{k_{m}}\left( i,j\right) \right] P\left( A_{k_{m}}\left(
i,j\right) \right) .
\end{equation*}%
By Assumption \ref{Assume_Probability_Sum} it follows that $h\left(
F_{a,n}\right) \leq K$ for all $n\geq 1$ and $a\geq 0.$ By the same argument
as for $g\left( .\right) $ it also follows that 
\begin{equation*}
h\left( F_{a,k}\right) +h\left( F_{a+k,m}\right) \leq h\left(
F_{a,k+m}\right)
\end{equation*}%
for all $1\leq k<k+m$ and $a\geq 0.$ Finally, it is obvious that $g\left(
F_{a,n}\right) \leq Kh\left( F_{a,n}\right) /\log ^{2}\left( a+1\right) $
for all $n\geq 1,$ $a\geq 0.$ Then, it follows from Lemmas \ref%
{Lemma_Stout_2.4.1} and \ref{Lemma_Stout_2.4.2} and Theorem 3.7.1 in Stout
(1974) that $n^{-1}S_{n}\rightarrow 0$ almost surely.
\end{proof}

\subsection{Results and Proofs for Section \protect\ref{Section_CLT}\label%
{Section_CLT_Proofs}}

First, a lemma required in the proof of the CLT is presented.

\begin{lemma}
\label{Lemma_Suff_1}Let $S_{n,x}=n^{-1/2}\sum_{i=1}^{N}X_{i,n}$ with $%
\left\{ X_{i,n},\mathcal{F}_{n}^{i}\right\} _{i=1}^{N}$ as defined in (\ref%
{Def_Xi}) and (\ref{Def_Fi}). Further assume that $\sup_{i}E\left[
\left\vert v_{i,n}\left( \zeta \right) \right\vert ^{2+\delta }|\zeta \right]
<\infty $ a.s. Then, 
\begin{equation*}
\max_{i}\left\vert n^{-1/2}X_{i,n}\right\vert \rightarrow _{p}0
\end{equation*}%
and 
\begin{equation*}
E\left[ n^{-1}\max_{i}\left\vert X_{i,n}^{2}\right\vert \right] \text{ is
bounded in }n.
\end{equation*}
\end{lemma}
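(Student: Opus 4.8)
The plan is to reduce both assertions to moment bounds on the block sums $X_{i,n}$ and then to count. First I would record the combinatorics coming from the construction (\ref{Algo_1})--(\ref{Algo_12}): the sets $J_{k}(q_{i})$, $i=1,\dots ,N$, are pairwise disjoint subsets of $\{1,\dots ,n\}$ with $|J_{k_{n}^{i}}(q_{i})|\leq c_{J}n^{3/4}$ in large samples, and $N=n/(\lfloor c_{T}n^{1/4-\epsilon }\rfloor +\lfloor c_{J}n^{3/4}\rfloor )=O(n^{1/4})$. From the moment hypothesis (together with the maintained Assumption \ref{Assume_Moments_Mixing}) one gets a uniform constant bound $\sup_{i}E[|v_{i,n}(\zeta )|^{2+\delta }]\leq K$, hence $\sup_{i}\Vert v_{i,n}(\zeta )-\mu _{i,n}\Vert _{2+\delta }\leq 2K^{1/(2+\delta )}$, so every unconditional second and $(2+\delta )$-th moment that appears below is uniformly bounded.

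For the second assertion I would dominate $\max_{i}X_{i,n}^{2}\leq \sum_{i=1}^{N}X_{i,n}^{2}$, so that $E[n^{-1}\max_{i}|X_{i,n}^{2}|]\leq n^{-1}\sum_{i=1}^{N}E[X_{i,n}^{2}]$, expand $E[X_{i,n}^{2}]=\sum_{j,l\in J_{k}(q_{i})}\limfunc{Cov}(v_{j,n}(\zeta ),v_{l,n}(\zeta ))$, and insert the covariance bound of Lemma \ref{WeakLLN}. Using $\sup_{i}c_{i}<\infty $ and the summability of $\sum_{l}\sum_{m}E[\psi _{j,k_{m}}(\zeta )|A_{k_{m}}(j,l)]P(A_{k_{m}}(j,l))$ from Assumption \ref{Assume_Probability_Sum} gives $E[X_{i,n}^{2}]\leq C|J_{k}(q_{i})|$, and since $\sum_{i}|J_{k}(q_{i})|\leq n$ the bound $E[n^{-1}\max_{i}|X_{i,n}^{2}|]\leq C$ follows. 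For the first assertion I would use a union bound and Markov's inequality at order $2+\delta $: $\Pr (\max_{i}n^{-1/2}|X_{i,n}|>\epsilon )\leq \epsilon ^{-(2+\delta )}n^{-(2+\delta )/2}\sum_{i=1}^{N}E|X_{i,n}|^{2+\delta }$. Granting a Rosenthal/Burkholder-type estimate $E|X_{i,n}|^{2+\delta }\leq C|J_{k}(q_{i})|^{(2+\delta )/2}$ and plugging in $|J_{k}(q_{i})|\leq c_{J}n^{3/4}$ and $N=O(n^{1/4})$ yields a bound of order $n^{1/4}\cdot n^{3(2+\delta )/8}\cdot n^{-(2+\delta )/2}=n^{-\delta /8}\rightarrow 0$, which is the claim; equivalently, the same moment estimate verifies the Lindeberg-type condition $n^{-1}\sum_{i}E[X_{i,n}^{2}1\{|X_{i,n}|>\epsilon \sqrt{n}\}]\rightarrow 0$, from which $\max_{i}n^{-1/2}|X_{i,n}|\rightarrow _{p}0$ follows directly.

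The hard part will be the $L_{2+\delta }$ moment bound on the dependent block sum $X_{i,n}$: at the $L_{2}$ level the elementary covariance inequality of Lemma \ref{WeakLLN} suffices, but there is no such inequality at order $2+\delta $, so one must instead either extend the Stout maximal inequality of Lemma \ref{Lemma_Stout_2.4.1} and Theorem \ref{Theorem_Maximal} to the $(2+\delta )$-norm or run an auxiliary sub-blocking recursion inside $J_{k}(q_{i})$, in either case feeding in the decay of the mixing coefficients $\psi _{i,k}(\zeta )$ and the sparsity from Assumptions \ref{Assume_Probability_Sum} and \ref{Assume_Moments_Mixing} to beat the trivial Minkowski bound $\Vert X_{i,n}\Vert _{2+\delta }=O(|J_{k}(q_{i})|)$ down to $O(|J_{k}(q_{i})|^{1/2})$. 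It is precisely so that the resulting powers of $n$ balance -- block size $n^{3/4}$, number of blocks $n^{1/4}$, normalization $n^{1/2}$ -- that the exponents in the block construction are chosen as they are, and the surplus factor $n^{-\delta /8}$ is the margin that the extra moment $\delta $ buys.
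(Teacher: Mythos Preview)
Your route differs from the paper's in both halves. For the boundedness assertion you use the covariance inequality of Lemma~\ref{WeakLLN} to get $E[X_{i,n}^{2}]\leq C\,|J_{k}(q_{i})|$ and then sum via $\sum_{i}|J_{k}(q_{i})|\leq n$. The paper instead tries to show that $U_{n}^{2}=n^{-1}\sum_{i}X_{i,n}^{2}$ is uniformly integrable by bounding $E[|U_{n}^{2}|^{1+\delta/2}]$ with two nested Minkowski inequalities, never invoking Lemma~\ref{WeakLLN} or Assumption~\ref{Assume_Probability_Sum}. Your version is the one that closes cleanly: the Lemma as stated assumes only the conditional $(2+\delta)$-moment bound on $v_{i,n}$, which by itself yields only the crude $E[X_{i,n}^{2}]\leq K\,|J_{k}(q_{i})|^{2}$, and with $|J_{k}(q_{i})|\sim n^{3/4}$ that is not enough to keep $n^{-1}\sum_{i}E[X_{i,n}^{2}]$ bounded.

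For $\max_{i}|n^{-1/2}X_{i,n}|\rightarrow_{p}0$ you correctly isolate the crux: one needs $\Vert X_{i,n}\Vert_{2+\delta}=O(|J_{k}(q_{i})|^{1/2})$ rather than the trivial $O(|J_{k}(q_{i})|)$. The paper again uses only the trivial Minkowski bound; the displayed estimate~(\ref{D_L_Suff_1-2}) carries a power of $n$ that is too favourable---it should be $n^{-(1+\delta/2)}$, not $n^{-(1+\delta/2)(2+\delta)}$---and with the exponent corrected the final count $n^{-(1+\delta/2)}\sum_{i}|J_{k}(q_{i})|^{2+\delta}$ does not vanish. So the Rosenthal- or sub-blocking-type inequality you flag as ``the hard part'' is precisely what is missing from \emph{both} arguments; if you can establish it from the mixingale and sparsity structure (Assumptions~\ref{Assume_Probability_Sum}--\ref{Assume_Moments_Mixing}, which should then be added to the hypotheses of the Lemma), your $n^{-\delta/8}$ computation goes through.
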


\begin{proof}[Proof of Lemma \protect\ref{Lemma_Suff_1}]
Clearly, $\max_{i}\left\vert X_{i,n}^{2}\right\vert \leq
\sum_{i=1}^{N}X_{i,n}^{2}.$ Then, $E\left[ n^{-1}\max_{i}\left\vert
X_{i,n}^{2}\right\vert \right] $ is bounded if $U_{n}^{2}=n^{-1}%
\sum_{i=1}^{N}X_{i,n}^{2}$ is uniformly integrable. Consider 
\begin{eqnarray}
E\left[ U_{n}^{2}1\left\{ U_{n}>\varepsilon \right\} \right] &=&E\left[
\tsum\nolimits_{i=1}^{N}\left( n^{-1/2}X_{i,n}\right) ^{2}1\left\{
\tsum\nolimits_{i=1}^{N}X_{i,n}^{2}>\varepsilon n\right\} \right]
\label{D_L_Suff_1-1} \\
&\leq &\frac{1}{\varepsilon ^{\delta /2}}E\left[ \left\vert
\tsum\nolimits_{i=1}^{N}\left( n^{-1/2}X_{i,n}\right) ^{2}\right\vert
^{1+\delta /2}\right]  \notag \\
&\leq &\frac{1}{\varepsilon ^{\delta /2}}E\left[ \tsum\nolimits_{i=1}^{N}E%
\left[ \left\vert n^{-1/2}X_{i,n}\right\vert ^{2+\delta }|\zeta \right]
^{1/\left( 1+\delta /2\right) }\right] ^{1+\delta /2}  \notag
\end{eqnarray}%
where the last inequality follows from the law of iterated expectations and
Stout (1974, p. 201). By the same inequality one obtains from H\"{o}lder's
inequality that $\left\vert v_{j,n}\left( \zeta \right) -\mu
_{j,n}\right\vert ^{2+\delta }\leq 2^{1+\delta }\left( \left\vert
v_{j,n}\left( \zeta \right) \right\vert ^{2+\delta }+\left\vert \mu
_{i,n}\right\vert ^{2+\delta }\right) $ such that 
\begin{eqnarray}
E\left[ \left\vert n^{-1/2}X_{i,n}\right\vert ^{2+\delta }|\zeta \right]
&\leq &\left( n^{-\left( 1+\delta /2\right) }\sum_{j\in J_{k}\left(
q_{i}\right) }E\left[ \left\vert v_{j,n}\left( \zeta \right) -\mu
_{j,n}\right\vert ^{2+\delta }|\zeta \right] ^{1/\left( 2+\delta \right)
}\right) ^{2+\delta }  \label{D_L_Suff_1-2} \\
&\leq &\left( n^{-\left( 1+\delta /2\right) }\sum_{j\in J_{k}\left(
q_{i}\right) }\left( 2^{1+\delta }\left( E\left[ \left\vert v_{j,n}\left(
\zeta \right) \right\vert ^{2+\delta }|\zeta \right] +\left\vert \mu
_{i,n}\right\vert ^{2+\delta }\right) \right) ^{1/\left( 2+\delta \right)
}\right) ^{2+\delta }  \notag \\
&\leq &Kn^{-\left( 1+\delta /2\right) \left( 2+\delta \right) }\left(
\left\vert J_{k}\left( q_{i}\right) \right\vert \right) ^{2+\delta }.  \notag
\end{eqnarray}%
Substituting back in (\ref{D_L_Suff_1-1}) gives%
\begin{eqnarray*}
E\left[ U_{n}^{2}1\left\{ U_{n}>\varepsilon \right\} \right] &\leq &\frac{1}{%
\varepsilon ^{\delta /2}}Kn^{-\left( 2+\delta \right) }\left( E\left[
\tsum\nolimits_{i=1}^{N}\left( \left\vert J_{k}\left( q_{i}\right)
\right\vert \right) ^{\frac{2+\delta }{1+\delta /2}}\right] \right)
^{1+\delta /2} \\
&\leq &\frac{1}{\varepsilon ^{\delta /2}}Kn^{-\left( 2+\delta \right)
}\left( E\left[ \left( \tsum\nolimits_{i=1}^{N}\left\vert J_{k}\left(
q_{i}\right) \right\vert \right) ^{\frac{2+\delta }{1+\delta /2}}\right]
\right) ^{1+\delta /2}
\end{eqnarray*}%
Since $\tsum\nolimits_{i=1}^{N}\left\vert J_{k}\left( q_{i}\right)
\right\vert \leq n$ for all $N\geq 1$ it follows that $\tsum%
\nolimits_{i=1}^{N}\left( \left\vert J_{k}\left( q_{i}\right) \right\vert
\right) ^{\frac{2+\delta }{1+\delta /2}}\leq \left(
\tsum\nolimits_{i=1}^{N}\left\vert J_{k}\left( q_{i}\right) \right\vert
\right) ^{\frac{2+\delta }{1+\delta /2}}\leq n^{\frac{2+\delta }{1+\delta /2}%
}.$ This implies that 
\begin{equation*}
E\left[ U_{n}^{2}1\left\{ U_{n}>\varepsilon \right\} \right] \leq \frac{1}{%
\varepsilon ^{\delta /2}}K\rightarrow 0\text{ as }\varepsilon \rightarrow
\infty
\end{equation*}%
which establishes that $U_{n}^{2}$ is uniformly integrable. This proofs the
first claim of the Lemma.

By Hall and Heyde (1980, p.53) $\max_{i}\left\vert
n^{-1/2}X_{i,n}\right\vert \rightarrow _{p}0$ follows from $\varepsilon >0$
and%
\begin{eqnarray*}
P\left( \max_{i}\left\vert n^{-1/2}X_{i,n}\right\vert >\varepsilon
^{1/2}\right) &=&P\left( \tsum\nolimits_{i=1}^{N}\left(
n^{-1/2}X_{i,n}\right) ^{2}1\left\{ \left\vert X_{i,n}\right\vert
>\varepsilon ^{1/2}n^{1/2}\right\} >\varepsilon \right) \\
&\leq &\frac{1}{\varepsilon }E\left[ \tsum\nolimits_{i=1}^{N}\left(
n^{-1/2}X_{i,n}\right) ^{2}1\left\{ \left\vert X_{i,n}\right\vert
>\varepsilon ^{1/2}n^{1/2}\right\} \right] \\
&\leq &\frac{1}{\varepsilon ^{1+\delta /2}}E\left[ \tsum\nolimits_{i=1}^{N}E%
\left[ \left\vert n^{-1/2}X_{i,n}\right\vert ^{2+\delta }|\zeta \right] %
\right] \\
&\leq &Kn^{-\left( 1+\delta /2\right) \left( 2+\delta \right) }E\left[
\tsum\nolimits_{i=1}^{N}\left( \left\vert J_{k}\left( q_{i}\right)
\right\vert \right) ^{2+\delta }\right] \\
&\leq &Kn^{-\delta \left( 2+\delta \right) /2}\rightarrow 0
\end{eqnarray*}%
which establishes the second claim.
\end{proof}

\begin{lemma}
\label{Lemma_Mix_Ineq}Let $\left\{ X_{i,n},\mathcal{F}_{n}^{i}\right\}
_{i=1}^{N}$ as defined in (\ref{Def_Xi}) and (\ref{Def_Fi}). Further assume
that $\sup_{i}E\left[ \left\vert v_{i,n}\left( \zeta \right) \right\vert
^{2+\delta }|\zeta \right] <\infty $ a.s. and that $\sup_{i}\psi
_{i,h}\left( \zeta \right) \leq \psi _{h}\left( \zeta \right) .$ Then, 
\begin{equation*}
E\left[ \left\Vert E\left[ X_{i,n}|\mathcal{F}_{n}^{i-1}\right] \right\Vert
_{2,\zeta }\right] \leq \sup_{j}\left\vert c_{j}\right\vert E\left[ \psi
_{h^{\prime }}\left( \zeta \right) ^{2}\left\vert J_{k_{n}^{i}}\left(
q_{i}\right) \right\vert \right] .
\end{equation*}
\end{lemma}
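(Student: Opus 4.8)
The plan is to reduce the statement to the single--index mixingale inequality (\ref{Mix1}), applied term by term to the summands of $X_{i,n}$. Write $X_{i,n}=\sum_{j\in J_{k_{n}^{i}}\left( q_{i}\right) }\left( v_{j,n}\left( \zeta \right) -\mu _{j,n}\right) $; all conditional expectations and all $\left\Vert \cdot \right\Vert _{2,\zeta }$ norms are taken, as throughout the paper, in the conditional space $\left( \Omega ,\mathcal{F},Q_{\omega }\right) $ of Section \ref{Section_ProbabilitySpace}, while the outer $E\left[ \cdot \right] $ is the $P$--expectation (integration over $\zeta $). The hypothesis $\sup _{i}E\left[ \left\vert v_{i,n}\left( \zeta \right) \right\vert ^{2+\delta }|\zeta \right] <\infty $ a.s.\ guarantees that every $\left\Vert \cdot \right\Vert _{2,\zeta }$ norm below is finite a.s., so the manipulations are legitimate. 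The key structural input is the inclusion $\mathcal{F}_{n}^{i-1}\subseteq \mathcal{B}_{j,n}^{h^{\prime }}$, valid for every $j\in J_{k_{n}^{i}}\left( q_{i}\right) $ by the triangular inequality on $g_{ij}\left( \zeta \right) $ together with the definition of $h^{\prime }$; this is exactly what was established in the paragraph preceding (\ref{Mixing_Bound}), and it lets the conditioning information in $\mathcal{F}_{n}^{i-1}$ be absorbed into a mixingale coefficient.

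First I would handle a single index $j\in J_{k_{n}^{i}}\left( q_{i}\right) $. By linearity, $E\left[ X_{i,n}|\mathcal{F}_{n}^{i-1}\right] =\sum_{j\in J_{k_{n}^{i}}\left( q_{i}\right) }E\left[ v_{j,n}\left( \zeta \right) -\mu _{j,n}|\mathcal{F}_{n}^{i-1}\right] $, and for each $j$ I insert the coarser field via the tower property, $E\left[ v_{j,n}\left( \zeta \right) -\mu _{j,n}|\mathcal{F}_{n}^{i-1}\right] =E\left[ E\left[ v_{j,n}\left( \zeta \right) -\mu _{j,n}|\mathcal{B}_{j,n}^{h^{\prime }}\right] |\mathcal{F}_{n}^{i-1}\right] $. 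Jensen's inequality for conditional expectations then gives $\left\Vert E\left[ v_{j,n}\left( \zeta \right) -\mu _{j,n}|\mathcal{F}_{n}^{i-1}\right] \right\Vert _{2,\zeta }\leq \left\Vert E\left[ v_{j,n}\left( \zeta \right) |\mathcal{B}_{j,n}^{h^{\prime }}\right] -\mu _{j,n}\right\Vert _{2,\zeta }$ (using that $\mu _{j,n}$ is constant), and (\ref{Mix1}) bounds the right--hand side by $c_{j}\psi _{j,h^{\prime }}\left( \zeta \right) $. The hypotheses $\sup _{i}\psi _{i,h}\left( \zeta \right) \leq \psi _{h}\left( \zeta \right) $ and $\sup _{i}\left\vert c_{i}\right\vert <\infty $ upgrade this to $\sup _{j}\left\vert c_{j}\right\vert \psi _{h^{\prime }}\left( \zeta \right) $, uniformly in $j$. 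Summing over the $\left\vert J_{k_{n}^{i}}\left( q_{i}\right) \right\vert $ indices by Minkowski's inequality for $\left\Vert \cdot \right\Vert _{2,\zeta }$ yields, pointwise in $\omega $ (recall $\left\vert J_{k_{n}^{i}}\left( q_{i}\right) \right\vert $ is $\mathcal{Z}$--measurable, hence $Q_{\omega }$--constant), $\left\Vert E\left[ X_{i,n}|\mathcal{F}_{n}^{i-1}\right] \right\Vert _{2,\zeta }\leq \sup _{j}\left\vert c_{j}\right\vert \psi _{h^{\prime }}\left( \zeta \right) \left\vert J_{k_{n}^{i}}\left( q_{i}\right) \right\vert $; taking $P$--expectations gives the asserted inequality. (If one prefers the squared--norm form of (\ref{Mixing_Bound}), replace Minkowski at this last step by $\left( \sum_{j}a_{j}\right) ^{2}\leq \left\vert J_{k_{n}^{i}}\left( q_{i}\right) \right\vert \sum_{j}a_{j}^{2}$ with $a_{j}=E\left[ v_{j,n}\left( \zeta \right) -\mu _{j,n}|\mathcal{F}_{n}^{i-1}\right] $, which is what produces the factor $\psi _{h^{\prime }}\left( \zeta \right) ^{2}$.)

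The point that needs genuine care is not any individual inequality but making the two layers of conditioning rigorous in the conditional model. The filtration $\mathcal{F}_{n}^{i-1}$ of (\ref{Def_Fi}) is generated by network statistics over the $\zeta $--dependent index sets $J_{k_{n}^{1}}\left( q_{1}\right) ,\ldots ,J_{k_{n}^{i-1}}\left( q_{i-1}\right) $, and $\mathcal{B}_{j,n}^{h^{\prime }}$ is likewise $\zeta $--dependent through its truncations, so the tower identity and the inclusion $\mathcal{F}_{n}^{i-1}\subseteq \mathcal{B}_{j,n}^{h^{\prime }}$ have to be read as $Q_{\omega }$--almost--sure statements, using the regular conditional distribution of Section \ref{Section_ProbabilitySpace} under which $\zeta $ is frozen, the index sets become deterministic, and the truncating indicators collapse to constants. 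One must verify that conditional expectation given $\mathcal{F}_{n}^{i-1}$, Jensen's contraction, and the tower property all hold $Q_{\omega }$--a.s.\ in this frozen model; this is routine given the $\mathcal{Z}$--measurability of $g_{ij}\left( \zeta \right) $ and of the ordered neighbour indices, but it is precisely where those measurability assumptions are used. A secondary, purely bookkeeping matter is keeping track of the exact power of $\left\vert J_{k_{n}^{i}}\left( q_{i}\right) \right\vert $ and of $\psi _{h^{\prime }}\left( \zeta \right) $ that survives the aggregation step, which depends only on whether one sums with Minkowski's inequality or with the elementary Cauchy--Schwarz bound quoted above.
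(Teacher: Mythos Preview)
Your argument is essentially the paper's own: reduce to the summands via linearity and Minkowski, use the inclusion $\mathcal{F}_{n}^{i-1}\subseteq \mathcal{B}_{j,n}^{h'}$ together with the tower property and the $L_{2}$--contraction of conditional expectation, invoke the mixingale bound (\ref{Mix1}), and then take the uniform bound over $j$ and integrate in $\zeta$. The paper proceeds in exactly this order (triangle/Jensen, iterated expectations, (\ref{Mix1}), uniform bound), and your discussion of the $\mathcal{Z}$--measurability of the index sets matches the paper's justification that $J_{k}(q_{i})$ is $\mathcal{Z}$--measurable. Your parenthetical remark is also on point: the paper's proof in fact establishes the \emph{squared}--norm version $E\left[\left\Vert E\left[X_{i,n}\mid\mathcal{F}_{n}^{i-1}\right]\right\Vert_{2,\zeta}^{2}\right]\leq \sup_{j}c_{j}^{2}\,E\left[\psi_{h'}(\zeta)^{2}\,\left\vert J_{k_{n}^{i}}(q_{i})\right\vert\right]$ (this is what appears as (\ref{Mixing_Bound}) in the text), whereas the lemma statement as printed has the unsquared norm on the left with $\psi_{h'}^{2}$ on the right; your Minkowski route naturally yields the unsquared norm bounded by $\sup_{j}\lvert c_{j}\rvert\,E\left[\psi_{h'}(\zeta)\,\left\vert J_{k_{n}^{i}}(q_{i})\right\vert\right]$, and you correctly flag that the extra square comes only from the Cauchy--Schwarz aggregation used in the squared version.
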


\begin{proof}
First note that because $J_{k}\left( q_{i}\right) $ is measurable with
respect to $\mathcal{Z}$ and using the triangular and Jensen's inequalities
it follows that 
\begin{eqnarray*}
E\left[ \left\Vert E\left[ X_{i,n}|\mathcal{F}_{n}^{i-1}\right] \right\Vert
_{2,\zeta }^{2}\right] &=&E\left[ \left\Vert \sum_{j\in J_{k_{n}^{i}}\left(
q_{i}\right) }\left( \mu _{j,n}-E\left[ v_{j,n}\left( \zeta \right) |%
\mathcal{F}_{n}^{i-1}\right] \right) \right\Vert _{2,\zeta }^{2}\right] \\
&\leq &E\left[ \sum_{j\in J_{k_{n}^{i}}\left( q_{i}\right) }\left\Vert E%
\left[ \left\vert \mu _{j,n}-E\left[ v_{j,n}\left( \zeta \right) |\mathcal{B}%
_{j,n}^{h^{\prime }}\right] \right\vert ^{2}|\mathcal{F}_{n}^{i-1}\right]
^{1/2}\right\Vert _{2,\zeta }^{2}\right] .
\end{eqnarray*}%
Now use the definition of $\left\Vert .\right\Vert _{2,\zeta }^{2}$ and
iterated expectations to conclude that 
\begin{eqnarray*}
&&E\left[ \sum_{j\in J_{k_{n}^{i}}\left( q_{i}\right) }\left\Vert E\left[
\left\vert \mu _{j,n}-E\left[ v_{j,n}\left( \zeta \right) |\mathcal{B}%
_{j,n}^{h^{\prime }}\right] \right\vert ^{2}|\mathcal{F}_{n}^{i-1}\right]
^{1/2}\right\Vert _{2,\zeta }^{2}\right] \\
&=&E\left[ \sum_{j\in J_{k_{n}^{i}}\left( q_{i}\right) }E\left[ \left\vert
\mu _{j,n}-E\left[ v_{j,n}\left( \zeta \right) |\mathcal{B}_{j,n}^{h^{\prime
}}\right] \right\vert ^{2}|\mathcal{F}_{n}^{i-1}\right] \right]
\end{eqnarray*}%
Repeated use of iterated expectations gives 
\begin{eqnarray*}
E\left[ \sum_{j\in J_{k_{n}^{i}}\left( q_{i}\right) }E\left[ \left\vert \mu
_{j,n}-E\left[ v_{j,n}\left( \zeta \right) |\mathcal{B}_{j,n}^{h^{\prime }}%
\right] \right\vert ^{2}|\mathcal{F}_{n}^{i-1}\right] \right] &=&E\left[
\sum_{j\in J_{k_{n}^{i}}\left( q_{i}\right) }\left\vert \mu _{j,n}-E\left[
v_{j,n}\left( \zeta \right) |\mathcal{B}_{j,n}^{h^{\prime }}\right]
\right\vert ^{2}\right] \\
&=&E\left[ \sum_{j\in J_{k_{n}^{i}}\left( q_{i}\right) }\left\Vert \mu
_{j,n}-E\left[ v_{j,n}\left( \zeta \right) |\mathcal{B}_{j,n}^{h^{\prime }}%
\right] \right\Vert _{2,\zeta }^{2}\right] \\
&\leq &E\left[ \sum_{j\in J_{k_{n}^{i}}\left( q_{i}\right) }c_{j}^{2}\psi
_{j,h^{\prime }}\left( \zeta \right) ^{2}\right] \\
&\leq &\sup c_{i}^{2}E\left[ \psi _{h^{\prime }}\left( \zeta \right)
^{2}\left\vert J_{k_{n}^{i}}\left( q_{i}\right) \right\vert \right] .
\end{eqnarray*}%
where the first inequality uses (\ref{Mix1}) and the second uses the fact
that $c_{j}$ is abounded constant as well as
\end{proof}

\begin{proof}[Proof of Proposition \protect\ref{Theorem_CLT_Blocks}]
The proof follows Hall and Heyde (1980, Theorem 3.2) as well as the
modifications to their proof in Kuersteiner and Prucha (2013). First recall
that by the conditions of the theorem 
\begin{equation}
\max_{i}\left\vert n^{-1/2}X_{i,n}\right\vert \rightarrow _{p}0,
\label{3.18}
\end{equation}%
\begin{equation}
n^{-1}\sum_{i=1}^{N}X_{i,n}^{2}\rightarrow _{p}\eta ^{2},  \label{3.19}
\end{equation}%
and 
\begin{equation}
E\left( n^{-1}\max_{i}\left\vert X_{i,n}^{2}\right\vert \right) \text{ is
bounded in }n.  \label{3.20}
\end{equation}%
Suppose that $\eta ^{2}$ is a.s. bounded such that for some $C>1$, 
\begin{equation}
P\left( \eta ^{2}<C\right) =1.  \label{3.22}
\end{equation}%
Define $X_{i,n}^{\dag }=X_{i,n}\mathbf{1}\left\{
n^{-1}\sum_{j=1}^{i-1}X_{j,n}^{2}\leq 2C\right\} $ with $X_{1,n}^{\dag
}=X_{1,n}$, $S_{i,n}=n^{-1/2}\sum_{j=1}^{i}X_{j,n}$ and $S_{i,n}^{\dag
}=n^{-1/2}\sum_{j=1}^{i}X_{j,n}^{\dag }$ for $1\leq i\leq N$.

Clearly for any $j\leq i$ the random variable $X_{j,n}$ is measurable w.r.t.
to $\mathcal{F}_{n}^{i}$, since $\mathcal{F}_{n}^{j}\subseteq \mathcal{F}%
_{n}^{i}$. Since the random variables $X_{n1},\ldots ,X_{ni}$ are measurable
w.r.t. $\mathcal{F}_{n}^{i}$, $S_{i,n}^{\dag }$ is measurable w.r.t. $%
\mathcal{F}_{n}^{i}$. Also, since $\left\vert S_{i,n}^{\dag }\right\vert
\leq $ $\left\vert S_{i,n}\right\vert $ it follows that $E\left[
S_{i,n}^{\dag 2}\right] \leq $ $E\left[ S_{i,n}^{2}\right] <\infty $.
Furthermore for $1\leq j\leq i,$ $E\left[ S_{j,n}^{\dag }|\mathcal{F}_{n}^{i}%
\right] =S_{i,n}^{\dag }.$ Use the notation $\mathcal{I}_{i,C}=\mathbf{1}%
\left\{ n^{-1}\tsum\nolimits_{j=1}^{i-1}X_{j,n}^{2}\leq 2C\right\} .$ It
follows immediately that for $k\geq 0$ 
\begin{equation*}
\left\Vert X_{i,n}^{\dag }-E\left[ X_{i,n}^{\dag }|\mathcal{F}_{n}^{i+k}%
\right] \right\Vert _{2,\zeta }=0.
\end{equation*}%
By construction, the distance between any element of $X_{i,n}$ and any
element in $\mathcal{F}_{n}^{i-1}$ measured in terms of $g\left( .\right) $
is at least $g_{ij}\left( \zeta \right) \leq \left( \Lambda \left(
h_{n}^{i-1}\right) ^{-1}-\Lambda \left( k_{n}^{i-1}\right) ^{-1}\right)
^{-1}.$ Since $\Lambda $ is monotonically decreasing in its argument it has
an inverse $\Lambda ^{-1}.$ Then, for $h^{\prime }$ such that $\Lambda
^{-1}\left( 1/\Lambda \left( h_{n}^{i-1}\right) -1/\Lambda \left(
k_{n}^{i-1}\right) \right) ^{-1}=h_{i}^{\prime }$ it follows that $\mathcal{B%
}_{j,n}^{h_{i}^{\prime }}\supseteq \mathcal{F}_{n}^{i-1}$ for all $j\in
J_{k}\left( q_{i}\right) .$ Then,%
\begin{eqnarray}
E\left[ \left\Vert E\left[ X_{i,n}^{\dag }|\mathcal{F}_{n}^{i-1}\right]
\right\Vert _{2,\zeta }^{2}\right]  &\leq &E\left[ \left\Vert \sum_{j\in
J_{k}\left( q_{i}\right) }\left( E\left[ \left( \mu _{j,n}-v_{j,n}\left(
\zeta \right) \right) \mathcal{I}_{i,C}|\mathcal{F}_{n}^{i-1}\right] \right)
\right\Vert _{2,\zeta }^{2}\right]   \label{X_dag_Mix} \\
&\leq &E\left[ \sum_{j\in J_{k}\left( q_{i}\right) }\left\Vert \left( E\left[
\left\vert \mu _{j,n}-E\left[ v_{j,n}\left( \zeta \right) |\mathcal{B}%
_{j,n}^{h^{\prime }}\right] \right\vert ^{2}|\mathcal{F}_{n}^{i-1}\right] 
\mathcal{I}_{i,C}\right) ^{1/2}\right\Vert _{2,\zeta }^{2}\right]   \notag \\
&\leq &\sup_{i}c_{i}E\left[ \psi _{h_{i}^{\prime }}\left( \zeta \right)
^{2}\left\vert J_{k}\left( q_{i}\right) \right\vert \right]   \notag
\end{eqnarray}%
where the second inequality uses the fact that $\mathbf{1}\left\{
n^{-1}\tsum\nolimits_{j=1}^{i-1}X_{j,n}^{2}\leq 2C\right\} $ is measurable
with respect to $\mathcal{F}_{n}^{i-1}$ and the last inequality uses the
fact that $\mathbf{1}\left\{ .\right\} \leq 1$ and uses the same argument as
in the proof of Lemma \ref{Lemma_Mix_Ineq}.

Next let $U_{N}^{2}=\sum_{i=1}^{N}X_{i,n}^{2}$, then clearly $%
P(U_{N}^{2}>2C)\rightarrow 0$ in light of (\ref{3.19}). Consequently%
\begin{equation}
P(X_{i,n}^{\dag }\neq X_{i,n}\text{ for some }i\leq N)\leq
P(U_{N}^{2}>2C)\rightarrow 0,  \label{3.23}
\end{equation}%
which in turn implies $P(S_{N,N}^{\dag }\neq S_{N,N})\rightarrow 0$, and
furthermore%
\begin{equation*}
E\left[ \left\vert \varsigma \exp (itS_{N,N}^{\dag })-\varsigma \exp
(itS_{N,N})\right\vert \right] \rightarrow 0
\end{equation*}%
for any $P$-essentially bounded and $\mathcal{C}$-measurable random variable 
$\varsigma $. Consequently, $S_{N,N}\overset{d}{\mathbf{\rightarrow }}Z$ ($%
\mathcal{C}$-stably) iff $S_{N,N}^{\dag }\overset{d}{\mathbf{\rightarrow }}Z$
($\mathcal{C}$-stably). Observe furthermore that in view of (\ref{3.23}) the
sequence $\{X_{i,n}^{\dag }\}$ satisfy that $\max_{i}\left\vert
n^{-1/2}X_{i,n}^{\dag }\right\vert \overset{p}{\rightarrow }0$ and $%
n^{-1}\sum_{i=1}^{N}X_{i,n}^{\dag 2}\overset{p}{\rightarrow }\eta ^{2}$.
Since $\left\vert X_{i,n}^{\dag }\right\vert \leq \left\vert
X_{i,n}\right\vert $ condition (\ref{3.20}) implies that $E\left[
n^{-1}\max_{i}X_{i,n}^{\dag 2}\right] $ is bounded in $n$.

Now show that $S_{N,N}^{\dag }\overset{d}{\mathbf{\rightarrow }}Z$ ($%
\mathcal{C}$-stably). Let $U_{i,n}^{2}=\sum_{j=1}^{i}X_{j,n}^{2}$ and $%
T_{n}^{\dag }\left( t\right) =\tprod\nolimits_{j=1}^{N}\left(
1+itX_{j,n}^{\dag }\right) $ with 
\begin{equation*}
J_{n}=\left\{ 
\begin{array}{cc}
\min \left\{ i\leq N|U_{i,n}^{2}>2C\right\} & \text{if }U_{N,N}^{2}>2C \\ 
N & \text{otherwise}%
\end{array}%
\right. .
\end{equation*}%
Observing that $X_{j,n}^{\dag }=0$ for $j>$ $J_{n}$, and that for any real
number $a$ we have $\left\vert 1+ia\right\vert ^{2}=(1+a^{2})$ and $\exp
(a^{2})\geq 1+a^{2}$, it follows that 
\begin{eqnarray}
E\left[ \left\vert T_{n}^{\dag }\left( t\right) \right\vert ^{2}\right] &=&E%
\left[ \tprod\nolimits_{j=1}^{N}\left( 1+t^{2}X_{j,n}^{\dag 2}\right) \right]
\label{T_dag_bound} \\
&\leq &E\left[ \left\{ \exp \left( t^{2}\sum_{j=1}^{J_{n}-1}X_{j,n}^{\dag
2}\right) \left( 1+t^{2}X_{J_{n},n}^{\dag 2}\right) \right\} \right]  \notag
\\
&\leq &\left\{ \exp (2Ct^{2})\right\} \left( 1+t^{2}E\left[
X_{J_{n},n}^{\dag 2}\right] \right) .  \notag
\end{eqnarray}%
Since $E\left[ X_{J_{n},n}^{\dag 2}\right] \leq E\left[ X_{J_{n},n}^{2}%
\right] $ is uniformly bounded it follows from the above inequality that $E%
\left[ \left\vert T_{n}^{\dag }\left( t\right) \right\vert ^{2}\right] $ is
uniformly bounded in $n$.

Now define $I_{n}=\exp \left( itS_{N,N}^{\dag }\right) $ and $W_{n}=\exp
\left( -\frac{1}{2}t^{2}\sum_{i=1}^{N}X_{i,n}^{\dag 2}+\sum_{i=1}^{N}r\left(
tX_{i,n}^{\dag }\right) \right) $ where $r\left( .\right) $ is implicitly
defined by $e^{ix}=\left( 1+ix\right) \exp \left( -\frac{1}{2}x^{2}+r\left(
x\right) \right) $ as in Hall and Heyde (1980), p. 57. Then 
\begin{equation}
I_{n}=T_{n}^{\dag }(t)\exp \left( -\eta ^{2}t^{2}/2\right) +T_{n}^{\dag
}(t)(W_{n}-\exp \left( -\eta ^{2}t^{2}/2\right) ).  \label{HH1}
\end{equation}%
For $S_{nk_{n}}^{\dag }\overset{d}{\mathbf{\rightarrow }}Z$ ($\mathcal{C}$
stably) it is enough to show that 
\begin{equation}
E\left( I_{n}\varsigma \right) \rightarrow E\left[ \exp \left( -\eta
^{2}t^{2}/2\right) \varsigma \right]  \label{HH2}
\end{equation}%
for any $P$-essentially bounded $\mathcal{C}$-measurable random variable $%
\varsigma $. Because $\mathcal{F}_{n}^{0}\subset \mathcal{F}_{n}^{i}$ it
follows that $\exp \left( -\eta ^{2}t^{2}/2\right) \varsigma $ is $\mathcal{F%
}_{n}^{i}$-measurable for all $n$ and $i\leq N$. Hence,%
\begin{eqnarray*}
E\left[ T_{n}^{\dag }\left( t\right) \exp \left( -\eta ^{2}t^{2}/2\right)
\varsigma \right] &=&E\left[ \exp \left( -\eta ^{2}t^{2}/2\right) \varsigma
\left( \tprod\nolimits_{j}^{N}\left( 1+itX_{j,n}^{\dag }\right) -1\right) %
\right] \\
&=&E\left\{ E\left[ \exp \left( -\eta ^{2}t^{2}/2\right) \varsigma
\tprod\nolimits_{j}^{N}\left( 1+itX_{nj}^{\dag }\right) |\mathcal{F}%
_{n}^{N-1}\right] \right\} \\
&=&E\left\{ \exp \left( -\eta ^{2}t^{2}/2\right) \varsigma
\tprod\nolimits_{j}^{N-1}\left( 1+itX_{nj}^{\dag }\right) E\left[ \left(
1+itX_{N,n}^{\dag }\right) |\mathcal{F}_{n}^{N-1}\right] \right\} \\
&=&E\left\{ \exp \left( -\eta ^{2}t^{2}/2\right) \varsigma
\tprod\nolimits_{j}^{N-1}\left( 1+itX_{nj}^{\dag }\right) \right\} \\
&&+E\left\{ \exp \left( -\eta ^{2}t^{2}/2\right) \varsigma
\tprod\nolimits_{j}^{N-1}\left( 1+itX_{nj}^{\dag }\right) E\left[
itX_{N,n}^{\dag }|\mathcal{F}_{n}^{N-1}\right] \right\}
\end{eqnarray*}%
where 
\begin{eqnarray*}
&&\left\vert E\left\{ \exp \left( -\eta ^{2}t^{2}/2\right) \varsigma
\tprod\nolimits_{j}^{N-1}\left( 1+itX_{nj}^{\dag }\right) E\left[
itX_{N,n}^{\dag }|\mathcal{F}_{n}^{N-1}\right] \right\} \right\vert \\
&\leq &E\left[ \left\vert \varsigma \right\vert ^{2}\left\vert
\tprod\nolimits_{j}^{N-1}\left( 1+itX_{nj}^{\dag }\right) \right\vert ^{2}%
\right] ^{1/2}E\left[ \left\vert E\left[ itX_{N,n}^{\dag }|\mathcal{F}%
_{n}^{N-1}\right] \right\vert ^{2}\right] \\
&=&E\left[ \left\vert \varsigma \right\vert ^{2}\left\vert
\tprod\nolimits_{j}^{N-1}\left( 1+itX_{nj}^{\dag }\right) \right\vert ^{2}%
\right] ^{1/2}E\left[ \left\Vert E\left[ X_{i,n}^{\dag }|\mathcal{F}%
_{n}^{N-1}\right] \right\Vert _{2,\zeta }^{2}\right] \\
&\leq &K^{2}\sup_{i}c_{i}^{2}E\left[ \left\vert J_{k}\left( q_{N}\right)
\right\vert \psi _{h_{N}^{\prime }}\left( \zeta \right) ^{2}\right]
\end{eqnarray*}%
where $P\left( \left\vert \varsigma \right\vert <K\right) =1$ for some $K,$
the fact that $E\left[ \left\vert \tprod\nolimits_{j}^{N-1}\left(
1+itX_{nj}^{\dag }\right) \right\vert ^{2}\right] $ is bounded by (\ref%
{T_dag_bound}) and $E\left[ \left\Vert E\left[ X_{i,n}^{\dag }|\mathcal{F}%
_{n}^{i-1}\right] \right\Vert _{2,\zeta }^{2}\right] <\sup_{i}c_{i}^{2}E%
\left[ \left\vert J_{k}\left( q_{N}\right) \right\vert \psi _{h^{\prime
}}\left( \zeta \right) ^{2}\right] $ by (\ref{X_dag_Mix}). By the same
arguments, 
\begin{eqnarray*}
&&E\left\{ \exp \left( -\eta ^{2}t^{2}/2\right) \varsigma
\tprod\nolimits_{j}^{N-1}\left( 1+itX_{nj}^{\dag }\right) \right\} \\
&=&E\left\{ \exp \left( -\eta ^{2}t^{2}/2\right) \varsigma
\tprod\nolimits_{j}^{N-2}\left( 1+itX_{nj}^{\dag }\right) \right\} \\
&&+E\left\{ \exp \left( -\eta ^{2}t^{2}/2\right) \varsigma
\tprod\nolimits_{j}^{N-2}\left( 1+itX_{nj}^{\dag }\right) E\left[
itX_{N-1,n}^{\dag }|\mathcal{F}_{n}^{N-2}\right] \right\}
\end{eqnarray*}%
where 
\begin{equation*}
\left\vert E\left\{ \exp \left( -\eta ^{2}t^{2}/2\right) \varsigma
\tprod\nolimits_{j}^{N-2}\left( 1+itX_{nj}^{\dag }\right) E\left[
itX_{N-1,n}^{\dag }|\mathcal{F}_{n}^{N-2}\right] \right\} \right\vert \leq
\sup_{i}c_{i}^{2}K^{2}E\left[ \left\vert J_{k}\left( q_{N-1}\right)
\right\vert \psi _{h^{\prime }}\left( \zeta \right) \right] .
\end{equation*}%
Continuing the recursion it follows that for $h^{\prime }=\max_{i\leq
N}h_{i}^{\prime }$ 
\begin{eqnarray*}
\left\vert E\left[ T_{n}^{\dag }\left( t\right) \exp \left( -\eta
^{2}t^{2}/2\right) \varsigma \right] -E\left[ \exp \left( -\eta
^{2}t^{2}/2\right) \varsigma \right] \right\vert &\leq
&K^{2}\sup_{i}c_{i}^{2}E\left[ \sum_{i=1}^{N}\left\vert J_{k}\left(
q_{i}\right) \right\vert \psi _{h_{i}^{\prime }}\left( \zeta \right) ^{2}%
\right] \\
&\leq &K^{2}\sup_{i}c_{i}^{2}E\left[ \psi _{h^{\prime }}\left( \zeta \right)
^{2}\sum_{i=1}^{N}\left\vert J_{k}\left( q_{i}\right) \right\vert \right] \\
&\leq &n\sup_{i}c_{i}K^{2}E\left[ \psi _{h^{\prime }}\left( \zeta \right)
^{2}\right] =O\left( n^{-\delta }\right)
\end{eqnarray*}%
since $\sum_{i=1}^{N}\left\vert J_{k}\left( q_{i}\right) \right\vert \leq n$
and by Condition (v) of the Proposition.

Thus, in light of (\ref{HH1}), for (\ref{HH2}) to hold it suffices to show
that 
\begin{equation}
E\left[ T_{n}^{\dag }(t)\left( W_{n}-\exp \left( -\eta ^{2}t^{2}/2\right)
\right) \varsigma \right] \rightarrow 0.  \label{HH3}
\end{equation}%
Let $K$ be some constant such that $P(\left\vert \varsigma \right\vert \leq
K)=1$, then $E\left[ \left\vert T_{n}^{\dag }\left( t\right) \exp \left(
-\eta ^{2}t^{2}/2\right) \varsigma \right\vert ^{2}\right] \leq K^{2}E\left[
\left\vert T_{n}^{\dag }\left( t\right) \right\vert ^{2}\right] $ is
uniformly bounded in $n$, since $E\left[ \left\vert T_{n}^{\dag }\left(
t\right) \right\vert ^{2}\right] $ is uniformly bounded as shown above.
Observing that $\left\vert I_{n}\right\vert =1$ we also have $E\left[
\left\vert I_{n}\varsigma \right\vert ^{2}\right] \leq K^{2}$. In light of (%
\ref{HH1}) it follows furthermore that%
\begin{equation*}
E\left[ \left\vert T_{n}^{\dag }(W_{n}-\exp \left( -\eta ^{2}t^{2}/2\right)
)\varsigma \right\vert ^{2}\right] \leq 2E\left[ \left\vert I_{n}\varsigma
\right\vert ^{2}\right] +2E\left[ \left\vert T_{n}^{\dag }\left( t\right)
\exp \left( -\eta ^{2}t^{2}/2\right) \varsigma \right\vert ^{2}\right]
\end{equation*}%
is uniformly bounded in $n$, it follows that $T_{n}^{\dag }\left( t\right)
(W_{n}-\exp \left( -\eta ^{2}t^{2}/2\right) )\varsigma $ is uniformly
integrable. Having established uniform integrability, Condition (\ref{HH3})
now follows since as shown by Hall and Heyde (1980, Lemma 3.1), $W_{n}-\exp
\left( -\eta ^{2}t^{2}/2\right) \overset{p}{\rightarrow }0\ $by using
Conditions (\ref{3.18}) and (\ref{3.19}). Thus, it follows that $T_{n}^{\dag
}\left( W_{n}-\exp \left( -\eta ^{2}t^{2}/2\right) \right) \varsigma \overset%
{p}{\rightarrow }0$. This completes the proof that $S_{nk_{n}}^{\dag }%
\overset{d}{\rightarrow }Z$ ($\mathcal{C}$-stably) when $\eta ^{2}$ is a.s.
bounded.

The case where $\eta ^{2}$ is not a.s. bounded can be handled in the same
way as in Hall and Heyde (1980, p.62) after replacing their $I\left(
E\right) $ with $\varsigma .$

Let $\xi $ $\sim N(0,1)$ be some random variable independent of $\mathcal{C}$%
, and hence independent of $\eta $ (possibly after redefining all variables
on an extended probability space), then for any $P$-essentially bounded $%
\mathcal{C}$-measurable random variable $\varsigma $ we have 
\begin{equation*}
E\left[ \varsigma \exp (it\eta \xi )\right] =E\left[ \varsigma \exp (-\frac{1%
}{2}\eta ^{2}t^{2})\right]
\end{equation*}
by iterated expectations, and thus $S_{nk_{n}}\overset{d}{\rightarrow }\eta
\xi $ ($\mathcal{C}$-stably).
\end{proof}

\begin{proof}[Proof of Proposition \protect\ref{Theorem_Spatial_Mixing}]
By Assumptions (iii) and (iv)\ and setting 
\begin{equation*}
N=n/\left( c_{T}\left\lfloor n^{1/4}\right\rfloor +c_{J}\left\lfloor
n^{3/4}\right\rfloor \right)
\end{equation*}
it follows that%
\begin{equation*}
\frac{\left( \left\lfloor n^{3/4}\right\rfloor -1\right) n}{n\left(
\left\lfloor n^{1/4}\right\rfloor +\left\lfloor n^{3/4}\right\rfloor \right) 
}\leq \frac{\left\vert J_{k_{n}^{_{i}}}\left( q_{i}\right) \right\vert }{n}%
N\leq \frac{\left\lfloor n^{3/4}\right\rfloor n}{n\left( \left\lfloor
n^{1/4}\right\rfloor +\left\lfloor n^{3/4}\right\rfloor \right) }
\end{equation*}%
holds eventually as $n\rightarrow \infty .$ This implies that 
\begin{equation*}
\underset{n\rightarrow \infty }{\lim \inf }\inf_{i}\left\vert \frac{%
\left\vert J_{k_{n}^{q_{i}}}\left( q_{i}\right) \right\vert }{n}N\right\vert
=1
\end{equation*}%
and 
\begin{equation*}
\underset{n\rightarrow \infty }{\lim \sup }\sup_{i}\left\vert \frac{%
\left\vert J_{k_{n}^{q_{i}}}\left( q_{i}\right) \right\vert }{n}N\right\vert
=1
\end{equation*}%
By Condition (v) it follows that $n^{-1/4+\epsilon }\left\vert
T_{k_{n}^{i},h_{n}^{i}}\left( q_{i}\right) \right\vert =1$ a.s. With the
sets $J_{k_{n}^{q_{i}}}\left( q_{i}\right) $ and $T_{k_{n}^{i},h_{n}^{i}}%
\left( q_{i}\right) $ form the random variables $X_{i,n}=\sum_{j\in
J_{k_{n}^{_{i}}}\left( q_{i}\right) }\left( v_{j,n}\left( \zeta \right) -\mu
_{j,n}\right) $ for $i=1,...,N$ and $U_{i,n}=\sum_{j\in
T_{k_{n}^{i},h_{n}^{i}}\left( q_{i}\right) }\left( v_{j,n}\left( \zeta
\right) -\mu _{j,n}\right) $ for $i=1,...,N.$ It follows that for $%
S_{n}=\sum_{i=1}^{n}\left( v_{i,n}\left( \zeta \right) -\mu _{i,n}\right) $
one obtains $S_{n}=\sum_{i=1}^{N}\left( X_{i,n}+U_{i,n}\right) .$

The next step in the argument consists in showing that the component $%
n^{-1/2}\sum_{i=1}^{N}U_{i,n}$ in $n^{-1/2}S_{n}$ is asymptotically
negligible. For $\varepsilon >0$ and $\delta >0$ consider 
\begin{eqnarray}
P\left( \left\vert n^{-1/2}\tsum\nolimits_{i=1}^{N}U_{i,n}\right\vert
>\varepsilon \right) &\leq &\frac{1}{\varepsilon ^{2+\delta }}E\left[
\left\vert n^{-1/2}\tsum\nolimits_{i=1}^{N}U_{i,n}\right\vert ^{2+\delta }%
\right]  \label{Sum_Uin_bound} \\
&\leq &\frac{1}{\varepsilon ^{2+\delta }n^{1+\delta /2}}E\left[ \left(
\tsum\nolimits_{i=1}^{N}E\left[ \left\vert U_{i,n}\right\vert ^{2+\delta
}|\zeta \right] ^{1/\left( 2+\delta \right) }\right) ^{2+\delta }\right] 
\notag
\end{eqnarray}%
by an equality of Stout (1974, p. 201). Repeated application of that
inequality gives%
\begin{eqnarray}
E\left[ \left\vert U_{i,n}\right\vert ^{2+\delta }|\zeta \right] &\leq
&\left( \sum_{j\in T_{k_{n}^{i},h_{n}^{i}}\left( q_{i}\right) }E\left[
\left\vert v_{j,n}\left( \zeta \right) -\mu _{j,n}\right\vert ^{2+\delta
}|\zeta \right] ^{1/\left( 2+\delta \right) }\right) ^{2+\delta }
\label{Uin_bound} \\
&\leq &2^{1+\delta /2}\left( \sum_{j\in T_{k_{n}^{i},h_{n}^{i}}\left(
q_{i}\right) }\left( E\left[ \left\vert v_{j,n}\left( \zeta \right)
\right\vert ^{2+\delta }|\zeta \right] +\left\vert \mu _{j,n}\right\vert
^{2+\delta }\right) ^{1/\left( 2+\delta \right) }\right) ^{2+\delta }  \notag
\\
&\leq &2^{1+\delta /2}\left( E\left[ z\left( \zeta \right) ^{2+\delta
}|\zeta \right] +K^{2+\delta }\right) \left\vert
T_{k_{n}^{i},h_{n}^{i}}\left( q_{i}\right) \right\vert ^{2+\delta }  \notag
\end{eqnarray}%
where the second inequality is using the H\"{o}lder inequality. By Condition
(i) of the theorem $E\left[ \left\vert v_{j,n}\left( \zeta \right)
\right\vert ^{2+\delta }|\zeta \right] \leq E\left[ z\left( \zeta \right)
^{2+\delta }|\zeta \right] $ where $E\left[ z\left( \zeta \right) ^{2+\delta
}|\zeta \right] $ has bounded expectation. Then, substituting (\ref%
{Uin_bound}) into (\ref{Sum_Uin_bound}) and using the Cauchy-Schwartz
inequality leads to 
\begin{eqnarray}
&&P\left( \left\vert n^{-1/2}\tsum\nolimits_{i=1}^{N}U_{i,n}\right\vert
>\varepsilon \right)  \label{Negligible} \\
&\leq &\frac{2^{\left( 1+\delta /2\right) }E\left[ \left( E\left[ z\left(
\zeta \right) ^{2+\delta }|\zeta \right] +K^{2+\delta }\right) ^{2}\right]
^{1/2}}{\varepsilon ^{2+\delta }n^{1+\delta /2}}E\left[ \left(
\tsum\nolimits_{i=1}^{N}\left\vert T_{k_{n}^{i},h_{n}^{i}}\left(
q_{i}\right) \right\vert \right) ^{4+2\delta }\right] ^{1/2}  \notag \\
&=&O\left( n^{-\left( 1+\delta /2\right) +\left( 1/2-\epsilon \right) \left(
2+\delta \right) }\right) =o\left( 1\right)  \notag
\end{eqnarray}%
because $n^{-1/2+\epsilon }\tsum\nolimits_{i=1}^{N}\left\vert
T_{k_{n}^{i},h_{n}^{i}}\left( q_{i}\right) \right\vert =1$ a.s.

By Assumption (ii) of the Theorem $n^{-1}\sum_{i=1}^{N}X_{i,n}^{2}%
\rightarrow _{p}\eta ^{2}$ and by Assumption (1) of the Theorem it follows
from Lemma \ref{Lemma_Suff_1} that $\max_{i}\left\vert
n^{-1/2}X_{i,n}\right\vert \rightarrow _{p}0$ and $E\left[
n^{-1}\max_{i}\left\vert X_{i,n}^{2}\right\vert \right] $ is bounded in $n.$
This shows that Assumptions (i)-(iii) of Proposition \ref{Theorem_CLT_Blocks}
hold. Assumptions (iii) and (v) of the Theorem imply that Assumptions (iv)
and (v) of Proposition \ref{Theorem_CLT_Blocks} hold. Then, by Theorem \ref%
{Theorem_CLT_Blocks} it follows that 
\begin{equation*}
n^{-1/2}\sum_{i=1}^{N}X_{i,n}\rightarrow _{d}N\left( 0,\eta ^{2}\right) 
\text{ }C\text{-stably.}
\end{equation*}%
Since (\ref{Negligible}) holds, it follows by the continous mapping theorem
for stable convergence, see for example Kuersteiner and Prucha (2013), that $%
n^{-1/2}S_{n}=n^{-1/2}\sum_{i=1}^{N}X_{i,n}+o_{p}\left( 1\right) $ and 
\begin{equation*}
n^{-1/2}S_{n}\rightarrow _{d}N\left( 0,\eta ^{2}\right) \text{ }C\text{%
-stably.}
\end{equation*}
\end{proof}

\begin{proof}[Proof of Theorem \protect\ref{Theorem_Main}]
The result follows by construction from the algorithm given in (\ref{Algo_1}%
)-(\ref{Algo_12}) because the sequences $k_{n}^{i}$ and $h_{n}^{i}$ can
always be chose such that Conditions (iv) and Condition (v) of Proposition %
\ref{Theorem_Spatial_Mixing} are satisfied. The remaining conditions of
Proposition \ref{Theorem_Spatial_Mixing} are maintained in this Theorem. The
result thus follows immediately from Proposition \ref{Theorem_Spatial_Mixing}%
.
\end{proof}

\subsection{Proofs for Section\label{Proofs_Network} \protect\ref%
{Section_Network_Model}}

\begin{proof}[Proof of Proposition \protect\ref{Proposition_Example_Network} 
]
For (i) first consider the case of $v_{i,n}.$ First note that (\ref%
{Definition_dij_Neighborhood}) implies that $E\left[ v_{i,n}\right] =\mu
_{i,n}=\sum_{j=\min (1,i-\kappa _{u})}^{\max \left( n,i+\kappa _{u}\right) }E%
\left[ d_{ij}\right] .$ Choose $g_{ij}\left( \zeta \right) =H(\alpha
_{0}+\alpha _{\zeta }\left\vert \zeta _{ij}\right\vert )=H\left( \zeta
_{ij},\alpha \right) $ and consider 
\begin{equation*}
w_{j,i,n}^{k}\left( \zeta \right) =v_{j,n}\left( \zeta \right) 1\left\{
g_{ij}\left( \zeta \right) \leq H\left( \zeta _{ij},\alpha \right) \right\} .
\end{equation*}%
Let 
\begin{equation*}
\mathcal{B}_{i,n}^{\kappa _{u}}=\sigma \left( w_{1,n}^{k}\left( \zeta
\right) ,...,w_{n,n}^{k}\left( \zeta \right) \right)
\end{equation*}%
where $\mathcal{B}_{i,n}^{k}$ is the $\sigma $-field generated by $%
v_{l,n}\left( \zeta \right) $ for $l<i-\kappa _{u}-1$ and $l>i+\kappa
_{u}+1. $ The mean $\mu _{i,n}$ is given as 
\begin{equation*}
\mu _{i,n}=\sum_{j=1}^{n}E\left[ d_{ij}\right] =\sum_{j=\min (1,i-\kappa
_{u})}^{\max \left( n,i+\kappa _{u}\right) }E\left[ H\left( -\left\vert
\zeta _{ij}\right\vert \right) 1\left\{ \left\vert \zeta _{ij}\right\vert
<\kappa _{u}\right\} \right]
\end{equation*}%
because by the properties of the distribution of $\zeta $ it follows that $%
1\left\{ \left\vert \zeta _{ij}\right\vert <\kappa _{u}\right\} =0$ a.s. for 
$j<i-\kappa _{u}-1$ or $j>i+\kappa _{u}+1.$ Similarly, 
\begin{eqnarray*}
E\left[ v_{i,n}\left( \zeta \right) |\mathcal{B}_{i,n}^{k}\right]
&=&\sum_{j=1}^{n}E\left[ d_{ij}|\mathcal{B}_{i,n}^{k}\right] \\
&=&\sum_{j=\min (1,i-\kappa _{u}-1)}^{\max \left( n,i+\kappa _{u}+1\right)
}E \left[ H\left( -\left\vert \zeta _{ij}\right\vert \right) 1\left\{
\left\vert \zeta _{ij}\right\vert <\kappa _{u}\right\} |\mathcal{B}_{i,n}^{k}%
\right]
\end{eqnarray*}%
where the last equality follows from the fact that $v_{l,n}\left( \zeta
\right) =\sum_{k=1}^{n}d_{lk}$ does not depend on $\epsilon _{ij}$ for $%
l\neq i.$

Now only consider the case where $k$ is integer and $k>\kappa _{u}.$ Note
that $\mathcal{B}_{i,n}^{k}$ is the $\sigma $-field generated by $%
v_{l,n}\left( \zeta \right) $ for $l<i-k$ or $l>i+k.$ Then, noting that for $%
j\in \left\{ i-k,...,i+k\right\} ,$ $v_{l,n}$ does not depend on $\zeta
_{ij} $ and $g_{il}$ also does not depend on $\zeta _{ij}$, while $g_{ij}=0$
for $j\in \left\{ i-k,...,i+k\right\} .$ Thus, $\mathcal{B}_{i,n}^{k}$ does
not depend on $\zeta _{ij}$ as long as $k>\kappa _{u}.$ It follows that 
\begin{equation*}
E\left[ d_{ij}|\mathcal{B}_{i,n}^{k}\right] =E\left[ d_{ij}\right]
\end{equation*}%
and consequently that 
\begin{equation*}
\mu _{i,n}-E\left[ v_{i,n}\left( \zeta \right) |\mathcal{B}_{i,n}^{k}\right]
=0.
\end{equation*}%
This implies that $v_{i,k}\left( \zeta \right) =0$ for $k>\kappa _{u}.$
Similarly, for $v_{i}$ the same argument above applies except that now 
\begin{equation*}
E\left[ v_{i}\right] =\mu _{i}=\mu =\sum_{j=i-\kappa _{u}}^{i+\kappa _{u}}E%
\left[ d_{ij}\right]
\end{equation*}%
because for each $i,$ $d_{ij}$ has a distribution that only depends on $%
\left\vert i-j\right\vert $ but not on $i$. Similarly, 
\begin{equation*}
E\left[ v_{i}\left( \zeta \right) |\mathcal{B}_{i}^{k}\right]
=\sum_{j=i-\kappa _{u}}^{i+\kappa _{u}}E\left[ H\left( -\left\vert \zeta
_{ij}\right\vert \right) 1\left\{ \left\vert \zeta _{ij}\right\vert <\kappa
_{u}\right\} |\mathcal{B}_{i}^{k}\right]
\end{equation*}%
which, by the same arguments as before is constant for $k>\kappa _{u}$ and
equal to $\mu .$

For (ii) choose 
\begin{equation*}
A_{k_{m}}\left( i,j\right) =\left\{ \omega |\Lambda \left( k_{m-1}\right)
<g_{ij}\left( \zeta \right) \leq \Lambda \left( k_{m}\right) \right\}
\end{equation*}%
with $\Lambda \left( k\right) =H\left( k,\alpha \right) ,$ $k_{m}=m.$ It
follows that $v_{i,n}\leq 2\kappa _{u}$ a.s. because of the bounded support
assumption and the fact that network connections are limited to close
neighbors. Then Assumption \ref{Assume_Moments_Mixing} holds and $\left\Vert
\mu _{i,n}-E\left[ v_{i,n}\left( \zeta \right) |\mathcal{B}_{i,n}^{k}\right]
\right\Vert _{2,\zeta }\leq 4\kappa _{u}$ for all $k\leq \kappa _{u}$ wheras
by (i) the upper bound is zero for $k\geq \kappa _{u}.$ The same holds for $%
\left\Vert \mu _{i}-E\left[ v_{i}\left( \zeta \right) |\mathcal{B}_{i}^{k}%
\right] \right\Vert _{2,\zeta }.$ One then obtains, together with (i), that $%
\sum_{m=1}^{\infty }E\left[ \psi _{i,k_{m}}\left( \zeta \right)
|A_{k,n}\left( i,j\right) \right] \leq 4\kappa _{u}^{2}$ uniformly in $i$
and $j.$ It also follows that $\sum_{j=1}^{n}\Pr \left( A_{k_{m}}\left(
i,j\right) \right) \leq 2$ for all $i.$ Thus, 
\begin{equation*}
\sum_{j=i}^{n}\sum_{m=1}^{\infty }E\left[ \psi _{i,k_{m}}\left( \zeta
\right) |A_{k_{m}}\left( i,j\right) \right] \Pr \left( A_{k_{m}}\left(
i,j\right) \right) \leq 8\kappa _{u}^{2}
\end{equation*}%
and Assumption \ref{Assume_Probability_Sum} holds by Remark \ref%
{Remark_Probability_Summability}. The assumptions of the example then imply
that $E\left[ v_{i}\right] =\mu $ and by Theorem \ref{Theorem_Stout_3.7.1}
it follows 
\begin{equation}
n^{-1}\sum_{i=1}^{n}v_{i}\rightarrow _{a.s.}\mu .  \label{vi_as}
\end{equation}%
For $v_{i,n}$ note that when $\kappa _{u}+1<i\leq n-\kappa _{u}-1$ it
follows that $v_{i,n}=v_{i}$. Therefore write 
\begin{eqnarray}
n^{-1}\sum_{i=1}^{n}v_{i,n}\left( \zeta \right) &=&\frac{n-2\left( \kappa
_{u}+1\right) }{n}\frac{1}{n-2\left( \kappa _{u}+1\right) }\sum_{i=\kappa
_{u}+2}^{n-\kappa _{u}-1}v_{i}\left( \zeta \right)  \label{vn_Decomposition}
\\
&&+n^{-1}\sum_{i=1}^{\kappa _{u}+1}v_{i,n}\left( \zeta \right)
+n^{-1}\sum_{i=n-\kappa _{u}}^{n}v_{i,n}\left( \zeta \right) .  \notag
\end{eqnarray}%
By absolute convergence and the fact that $\left\vert v_{i,n}\left( \zeta
\right) \right\vert \leq 2\kappa _{u}$ it follows that 
\begin{equation}
\left\vert n^{-1}\sum_{i=n-\kappa _{u}}^{n}v_{i,n}\left( \zeta \right)
\right\vert \leq 2\kappa _{u}\frac{\kappa _{u}}{n}\rightarrow 0
\label{vin_1}
\end{equation}%
and 
\begin{equation}
\left\vert n^{-1}\sum_{i=1}^{\kappa _{u}+1}v_{i,n}\left( \zeta \right)
\right\vert \leq \kappa _{u}\frac{\kappa _{u}+1}{n}\rightarrow 0
\label{vin_2}
\end{equation}%
which means that the last two terms in (\ref{vn_Decomposition}) converge to
zero almost surely. For the first term in (\ref{vn_Decomposition}) note that 
$\frac{1}{n-2\left( \kappa _{u}+1\right) }\sum_{i=\kappa _{u}+2}^{n-\kappa
_{u}-1}v_{i}\left( \zeta \right) \rightarrow _{a.s.}\mu $ by the same
argument as in (\ref{vi_as}). Finally, the factor $\frac{n-2\left( \kappa
_{u}+1\right) }{n}\rightarrow 1$ as $n\rightarrow \infty .$ The result then
follows from the continuous mapping theorem.

For (iii) the assumptions of the central limit theorem need to be checked.
First consider $v_{i}$ which is the easier case. Note that $\left\vert
v_{i}\right\vert \leq 2\kappa _{u}$ which implies that Condition (i) of
Proposition \ref{Theorem_Spatial_Mixing}.

Using the properties of the joint distribution $P_{\zeta }$ choose sets $%
J_{k}\left( q_{1}\right) =\left\{ 1,...,\left\lfloor n^{3/4}\right\rfloor
\right\} $ where $q_{1}$ can be located at the center of $\left\{
1,...,\left\lfloor n^{3/4}\right\rfloor \right\} .$ Then choose 
\begin{equation*}
T_{h,k}\left( q_{1}\right) =\left\{ \left\lfloor n^{3/4}\right\rfloor
+1,...,\left\lfloor n^{3/4}\right\rfloor +\left\lfloor n^{1/4-\epsilon
}\right\rfloor \right\}
\end{equation*}
and continuing in this fashion. Let $N=n/\left( \left\lfloor
n^{3/4}\right\rfloor +\left\lfloor n^{1/4-\epsilon }\right\rfloor \right) .$
It then follows that once that Conditions (iv) and (v) of Proposition \ref%
{Theorem_Spatial_Mixing} hold. By the stationary nature of the process it
follows that $n^{-3/4}E\left[ X_{i,n}^{2}\right] =\sigma ^{2}+O\left(
n^{-3/4}\right) $ which implies that for $N=n/\left( \left\lfloor
n^{3/4}\right\rfloor +\left\lfloor n^{1/4}\right\rfloor \right) =O\left(
n^{1/4}\right) $ it follows that 
\begin{equation*}
n^{-1}\sum_{i=1}^{N}E\left[ X_{i,n}^{2}\right] =\frac{1}{N}\frac{N}{n^{1/4}}%
\sum_{i=1}^{N}n^{-3/4}E\left[ X_{i,n}^{2}\right] =\sigma ^{2}+o\left(
1\right) .
\end{equation*}%
Since in this model, $X_{i,n}$ is eventually (as $n$ increases) independent
of $X_{j,n}$ it follows immediately by a strong law of large numbers, or by
applying the theory developed in Section \ref{Section_LLN} that 
\begin{equation*}
n^{-1}\sum_{i=1}^{N}\left( X_{i,n}^{2}-E\left[ X_{i,n}^{2}\right] \right)
\rightarrow _{a.s.}0.
\end{equation*}%
This establishes Condition (ii) in Proposition \ref{Theorem_Spatial_Mixing}
holds. Condition (iii) of the Proposition was established before in (i). If $%
g_{ij}=1/\left\vert i-j\right\vert $ and $\Lambda \left( k\right) =1/k$ then 
$k_{n}^{1}=\left\lfloor n^{3/4}\right\rfloor ,$ $h_{n}^{1}=\left\lfloor
n^{3/4}\right\rfloor +\left\lfloor n^{1/4-\epsilon }\right\rfloor $ and $%
k_{n}^{i}=h_{n}^{i-1}+\left\lfloor n^{3/4}\right\rfloor $ and $%
h_{n}^{i}=k_{n}^{i}+\left\lfloor n^{1/4-\epsilon }\right\rfloor .$ It
follows that $h_{n}^{\prime }=\left\lfloor n^{1/4-\epsilon }\right\rfloor $
and by the result in (i) it follows that Condition (vi) of Proposition \ref%
{Theorem_Spatial_Mixing} holds. By Proposition \ref{Theorem_Spatial_Mixing}
it therefore follows that $n^{-1/2}\sum_{i=1}^{n}\left( v_{i}-\mu \right)
\rightarrow _{d}N\left( 0,\sigma ^{2}\right) $ $\mathcal{C}$-stably. Also
note that the regularity condition that $\Pr \left( g_{ij}\left( \zeta
\right) =g_{ik}\left( \zeta \right) \right) =0$ for all $i$ and all $j\neq k$
in Theorem \ref{Theorem_Main} trivially holds in this example. Finally, the
result for $v_{i,n}$ follows by the same argument as in (\ref{vin_1}) and (%
\ref{vin_2}) to show that the difference between $v_{i,n}$ and $v_{i}$ is
asymptotically negligible.
\end{proof}

\subsection{Proofs for Examples\label{ExampleProofs}}

\begin{proof}[Proof of Example \protect\ref{Example_Mixing}]
Let $H$ be the logistic CDF. Choose $g_{ij}\left( \zeta \right)
=p_{ij}\left( \zeta \right) =E\left[ d_{ij}|\zeta \right] $ and $\Lambda
\left( k\right) =H\left( -k\right) .$ This implies that $1\left\{
g_{ij}\left( \zeta \right) \leq \Lambda \left( k\right) \right\} =1\left\{
\left\vert \zeta _{i}-\zeta _{j}\right\vert >k\right\} .$ The mean $\mu
_{i,n}$ is given as 
\begin{equation*}
\mu _{i,n}=\sum_{j=1}^{n}E\left[ d_{ij}\right] =\sum_{j=1}^{n}E\left[
H\left( -\left\vert \zeta _{i}-\zeta _{j}\right\vert \right) \right]
\end{equation*}%
It is useful to decompose $E\left[ d_{ij}\right] $ as follows 
\begin{eqnarray*}
E\left[ d_{ij}\right] &=&E\left[ H\left( -\left\vert \zeta _{i}-\zeta
_{j}\right\vert \right) \right] =\int H\left( -\left\vert \zeta _{i}-\zeta
_{j}\right\vert \right) dP_{\zeta } \\
&=&\int_{\left\vert \zeta _{i}-\zeta _{j}\right\vert \leq k}H\left(
-\left\vert \zeta _{i}-\zeta _{j}\right\vert \right) dP_{\zeta
}+\int_{\left\vert \zeta _{i}-\zeta _{j}\right\vert >k}H\left( -\left\vert
\zeta _{i}-\zeta _{j}\right\vert \right) dP_{\zeta } \\
&=&E\left[ d_{ij}|\left\vert \zeta _{i}-\zeta _{j}\right\vert \leq k\right]
P\left( \left\vert \zeta _{i}-\zeta _{j}\right\vert \leq k\right) \\
&&+E\left[ d_{ij}|\left\vert \zeta _{i}-\zeta _{j}\right\vert >k\right]
P\left( \left\vert \zeta _{i}-\zeta _{j}\right\vert >k\right) \\
&\leq &E\left[ d_{ij}|\left\vert \zeta _{i}-\zeta _{j}\right\vert \leq k%
\right] P\left( \left\vert \zeta _{i}-\zeta _{j}\right\vert \leq k\right)
+\exp \left( -k\right) .
\end{eqnarray*}
Similarly, 
\begin{eqnarray*}
E\left[ v_{i,n}\left( \zeta \right) |\mathcal{B}_{i,n}^{k}\right]
&=&\sum_{j=1}^{n}E\left[ d_{ij}|\mathcal{B}_{i,n}^{k}\right] \\
&=&\sum_{j=1}^{n}E\left[ H\left( -\left\vert \zeta _{i}-\zeta
_{j}\right\vert \right) |\mathcal{B}_{i,n}^{k}\right]
\end{eqnarray*}%
where the last equality follows from the fact that $v_{l,n}\left( \zeta
\right) =\sum_{k=1}^{n}d_{lk}$ does not depend on $\epsilon _{ij}$ for $%
l\neq i.$

For $E\left[ H\left( -\left\vert \zeta _{i}-\zeta _{j}\right\vert \right) |%
\mathcal{B}_{i,n}^{k}\right] $ distinguish two cases: if $%
w_{j,i,n}^{k}\left( \zeta \right) >0$ then $\left\vert \zeta _{i}-\zeta
_{j}\right\vert >k$ and if $w_{j,i,n}^{k}\left( \zeta \right) =0$ then
either $\left\vert \zeta _{i}-\zeta _{j}\right\vert \leq k$ or $\left\vert
\zeta _{i}-\zeta _{j}\right\vert >k$ and $\epsilon _{jk}>\left\vert \zeta
_{j}-\zeta _{k}\right\vert $ for all $k=1,...,n.$ Consider the first case
where $\left\vert \zeta _{i}-\zeta _{j}\right\vert >k$. Since%
\begin{equation*}
H\left( -\left\vert \zeta _{i}-\zeta _{j}\right\vert \right) <\exp \left(
-\left\vert \zeta _{i}-\zeta _{j}\right\vert \right) \leq \exp \left(
-k\right)
\end{equation*}%
it follows that 
\begin{equation*}
E\left[ H\left( -\left\vert \zeta _{i}-\zeta _{j}\right\vert \right) |%
\mathcal{B}_{i,n}^{k}\right] <\exp \left( -k\right) .
\end{equation*}%
Similarly, for the third case where $\left\vert \zeta _{i}-\zeta
_{j}\right\vert >k$ and $\epsilon _{jk}>\left\vert \zeta _{j}-\zeta
_{k}\right\vert $ and noting that $H\left( -\left\vert \zeta _{i}-\zeta
_{j}\right\vert \right) $ does not depend on $\epsilon _{jk}$ one obtains
the bound%
\begin{equation*}
E\left[ H\left( -\left\vert \zeta _{i}-\zeta _{j}\right\vert \right) |%
\mathcal{B}_{i,n}^{k}\right] <\exp \left( -k\right) .
\end{equation*}%
Finally, when $\left\vert \zeta _{i}-\zeta _{j}\right\vert \leq k$ note that
since $H\left( -\left\vert \zeta _{i}-\zeta _{j}\right\vert \right) $ is
only a function of $\left\vert \zeta _{i}-\zeta _{j}\right\vert $ it follows
that $E\left[ H\left( -\left\vert \zeta _{i}-\zeta _{j}\right\vert \right) |%
\mathcal{B}_{i,n}^{k}\right] =E\left[ H\left( -\left\vert \zeta _{i}-\zeta
_{j}\right\vert \right) |\left\vert \zeta _{i}-\zeta _{j}\right\vert \leq k%
\right] .$ Then consider 
\begin{eqnarray*}
\left\vert E\left[ d_{ij}\right] -E\left[ d_{ij}|\mathcal{B}_{i,n}^{k}\right]
\right\vert &=&\left\vert E\left[ d_{ij}|\left\vert \zeta _{i}-\zeta
_{j}\right\vert \leq k\right] P\left( \left\vert \zeta _{i}-\zeta
_{j}\right\vert \leq k\right) -E\left[ d_{ij}|\left\vert \zeta _{i}-\zeta
_{j}\right\vert \leq k\right] \right\vert \\
&&+E\left[ d_{ij}|\left\vert \zeta _{i}-\zeta _{j}\right\vert >k\right]
P\left( \left\vert \zeta _{i}-\zeta _{j}\right\vert >k\right) \\
&\leq &\left\vert P\left( \left\vert \zeta _{i}-\zeta _{j}\right\vert \leq
k\right) -1\right\vert +\exp \left( -k\right) .
\end{eqnarray*}%
To summarize it follows from the above calculations that 
\begin{equation*}
\left\vert E\left[ d_{ij}\right] -E\left[ d_{ij}|\mathcal{B}_{i,n}^{k}\right]
\right\vert \leq \left\{ 
\begin{array}{cc}
P\left( \left\vert \zeta _{i}-\zeta _{j}\right\vert \leq k\right) +\exp
\left( -k\right) \left( 1+P\left( \left\vert \zeta _{i}-\zeta
_{j}\right\vert >k\right) \right) & \text{if }\left\vert \zeta _{i}-\zeta
_{j}\right\vert >k \\ 
\left\vert P\left( \left\vert \zeta _{i}-\zeta _{j}\right\vert \leq k\right)
-1\right\vert +\exp \left( -k\right) & \text{if }\left\vert \zeta _{i}-\zeta
_{j}\right\vert \leq k%
\end{array}%
\right.
\end{equation*}%
such that 
\begin{eqnarray*}
E\left[ \left\vert E\left[ d_{ij}\right] -E\left[ d_{ij}|\mathcal{B}%
_{i,n}^{k}\right] \right\vert \right] &\leq &\left( P\left( \left\vert \zeta
_{i}-\zeta _{j}\right\vert \leq k\right) +2\exp \left( -k\right) \right)
P\left( \left\vert \zeta _{i}-\zeta _{j}\right\vert >k\right) \\
&&+\left\vert P\left( \left\vert \zeta _{i}-\zeta _{j}\right\vert \leq
k\right) -1\right\vert P\left( \left\vert \zeta _{i}-\zeta _{j}\right\vert
\leq k\right) \\
&&+\exp \left( -k\right) P\left( \left\vert \zeta _{i}-\zeta _{j}\right\vert
\leq k\right)
\end{eqnarray*}%
To show that 
\begin{eqnarray*}
\sum_{j=1}^{n}E\left[ \left\vert E\left[ d_{ij}\right] -E\left[ d_{ij}|%
\mathcal{B}_{i,n}^{k}\right] \right\vert \right] &\leq &E\left[ \psi
_{i,k}\left( \zeta \right) \right] \\
&\leq &\sum_{j=1}^{n}\left( P\left( \left\vert \zeta _{i}-\zeta
_{j}\right\vert \leq k\right) +2\exp \left( -k\right) \right) P\left(
\left\vert \zeta _{i}-\zeta _{j}\right\vert >k\right) \\
&&+\sum_{j=1}^{n}\left\vert P\left( \left\vert \zeta _{i}-\zeta
_{j}\right\vert \leq k\right) -1\right\vert P\left( \left\vert \zeta
_{i}-\zeta _{j}\right\vert \leq k\right) \\
&&+\exp \left( -k\right) \sum_{j=1}^{n}P\left( \left\vert \zeta _{i}-\zeta
_{j}\right\vert \leq k\right)
\end{eqnarray*}%
goes to zero as $k\rightarrow \infty $ additional restrictions on the joint
distribution $P_{\zeta }$ of $\zeta $ are required. Fist note that 
\begin{equation*}
\sum_{j=1}^{n}P\left( \left\vert \zeta _{i}-\zeta _{j}\right\vert \leq
k\right) P\left( \left\vert \zeta _{i}-\zeta _{j}\right\vert >k\right)
=\sum_{j=1}^{n}P\left( \left\vert \zeta _{i}-\zeta _{j}\right\vert \leq
k\right) \left( 1-P\left( \left\vert \zeta _{i}-\zeta _{j}\right\vert \leq
k\right) \right)
\end{equation*}%
such that an overall bound on $E\left[ \psi _{i,k}\left( \zeta \right) %
\right] $ is given by 
\begin{eqnarray*}
E\left[ \psi _{i,k}\left( \zeta \right) \right] &\leq
&2\sum_{j=1}^{n}\left\vert P\left( \left\vert \zeta _{i}-\zeta
_{j}\right\vert \leq k\right) -1\right\vert P\left( \left\vert \zeta
_{i}-\zeta _{j}\right\vert \leq k\right) \\
&&+3\exp \left( -k\right) \sum_{j=1}^{n}P\left( \left\vert \zeta _{i}-\zeta
_{j}\right\vert \leq k\right) .
\end{eqnarray*}%
Now use the fact that $\sup_{i}\sum_{j=1}^{\infty }P\left( \left\vert \zeta
_{i}-\zeta _{j}\right\vert \leq k\right) <\infty $ for any $0\leq k<\infty .$
This implies that for any $\varepsilon >0$ there exists a $k_{1}<\infty $
such that 
\begin{equation}
\exp \left( -k_{1}\right) \sum_{j=1}^{n}P\left( \left\vert \zeta _{i}-\zeta
_{j}\right\vert \leq k_{1}\right) \leq \frac{\varepsilon }{2}.
\label{EX_Bound1}
\end{equation}%
More specifically, since $\sum_{j=1}^{n}P\left( \left\vert \zeta _{i}-\zeta
_{j}\right\vert \leq k\right) \leq K$ choose $k_{1}$ such that $k_{1}\geq
\log \left( 2K/\varepsilon \right) .$ For the same $\varepsilon $ there
exists an $n_{2}<\infty $ such that for all $n^{\prime }>n_{2}$ and for any $%
k_{2}<\infty $ fixed it holds that 
\begin{equation}
\sum_{j=n_{2}+1}^{\infty }P\left( \left\vert \zeta _{i}-\zeta
_{j}\right\vert \leq k_{2}\right) \leq \frac{\varepsilon }{8}.
\label{EX_TailBound}
\end{equation}%
Finally, for any $n_{2}$ given in (\ref{EX_TailBound})\ there is a $%
k_{3}<\infty $ such that 
\begin{equation}
\inf_{j\leq n_{2}}P\left( \left\vert \zeta _{i}-\zeta _{j}\right\vert \leq
k_{3}\right) \geq 1-\frac{\varepsilon }{4n_{2}}  \label{EX_CoverageBound}
\end{equation}%
and It then follows that for $k_{4}=\max \left( k_{2},k_{3}\right) $ 
\begin{eqnarray}
&&\sum_{j=1}^{n}\left\vert P\left( \left\vert \zeta _{i}-\zeta
_{j}\right\vert \leq k_{4}\right) -1\right\vert P\left( \left\vert \zeta
_{i}-\zeta _{j}\right\vert \leq k_{4}\right)  \label{EX_Bound2} \\
&\leq &\sum_{j=1}^{n_{2}}\left\vert P\left( \left\vert \zeta _{i}-\zeta
_{j}\right\vert \leq k_{4}\right) -1\right\vert +\sum_{j=n_{2}+1}^{\infty
}2P\left( \left\vert \zeta _{i}-\zeta _{j}\right\vert \leq k_{4}\right) 
\notag \\
&\leq &\frac{\varepsilon }{4n_{2}}n_{2}+2\frac{\varepsilon }{8}=\frac{%
\varepsilon }{2}  \notag
\end{eqnarray}%
where the last inequality used (\ref{EX_TailBound}) and (\ref%
{EX_CoverageBound}). Finally, set $k=\max \left( k_{1},k_{4}\right) $and
combine (\ref{EX_Bound1}) and (\ref{EX_Bound2}) to show that $E\left[ \psi
_{i,k}\left( \zeta \right) \right] \leq \varepsilon .$
\end{proof}

\begin{proof}[Proof of Example \protect\ref{Example_NeighborhoodNetwork}]
The proof follows a similar strategy as the proof for Example \ref%
{Example_Mixing}. The mean $\mu _{i,n}$ is given as 
\begin{equation*}
\mu _{i,n}=\sum_{j=1}^{n}E\left[ d_{ij}\right] =\sum_{j=i-1}^{i+1}E\left[
H\left( -\left\vert \zeta _{ij}\right\vert \right) 1\left\{ \left\vert \zeta
_{ij}\right\vert <1\right\} \right]
\end{equation*}%
because by the properties of the distribution of $\zeta $ it follows that $%
1\left\{ \left\vert \zeta _{ij}\right\vert <1\right\} =0$ for $j<i-1$ or $%
j>i+1.$ Similarly, 
\begin{eqnarray*}
E\left[ v_{i,n}\left( \zeta \right) |\mathcal{B}_{i,n}^{k}\right]
&=&\sum_{j=1}^{n}E\left[ d_{ij}|\mathcal{B}_{i,n}^{k}\right] \\
&=&\sum_{j=i-1}^{i+1}E\left[ H\left( -\left\vert \zeta _{i}{}_{j}\right\vert
\right) 1\left\{ \left\vert \zeta _{i}{}_{j}\right\vert <1\right\} |\mathcal{%
B}_{i,n}^{k}\right]
\end{eqnarray*}%
where the last equality follows from the fact that $v_{l,n}\left( \zeta
\right) =\sum_{k=1}^{n}d_{lk}$ does not depend on $\epsilon _{ij}$ for $%
l\neq i.$

Now only consider the case where $k$ is integer and $k>1.$ Note that $%
\mathcal{B}_{i,n}^{k}$ is the $\sigma $-field generated by $v_{l,n}\left(
\zeta \right) $ for $l<i-k$ or $l>i+k.$ Then, noting that $j\in \left\{
i-k,...,i+k\right\} ,$ $v_{l,n}$ does not depend on $\zeta _{ij}$ and $%
g_{il} $ also does not depend on $\zeta _{ij}$, while $g_{ij}=0$ for $j\in
\left\{ i-k,...,i+k\right\} .$ Thus, $\mathcal{B}_{i,n}^{k}$ does not depend
on $\zeta _{ij}$ as long as $k>1.$ It follows that 
\begin{equation*}
E\left[ d_{ij}|\mathcal{B}_{i,n}^{k}\right] =E\left[ d_{ij}\right]
\end{equation*}%
and consequently that 
\begin{equation*}
\mu _{i,n}-E\left[ v_{i,n}\left( \zeta \right) |\mathcal{B}_{i,n}^{k}\right]
=0.
\end{equation*}%
This implies that $v_{i,k}\left( \zeta \right) =0$ for $k>1.$
\end{proof}

\end{document}